\numberwithin{equation}{section}
\theoremstyle{plain}
\newtheorem{theorem}{Theorem}[section]
\newtheorem{proposition}[theorem]{Proposition}
\newtheorem{lemma}[theorem]{Lemma}
\newtheorem{definition}[theorem]{Definition}
\theoremstyle{definition}
\newtheorem{remark}[theorem]{Remark}
\DeclareMathOperator{\e}{e}
\DeclareMathOperator{\im}{Im}
\newcommand{\N}{\mathbb{N}}
\newcommand{\Z}{\mathbb{Z}}
\newcommand{\R}{\mathbb{R}}
\newcommand{\C}{\mathbb{C}}
\newcommand{\T}{\mathbb{T}}
\renewcommand{\epsilon}{\varepsilon}
\begin{document}

\title[Modified scattering for NLS on Diophantine waveguides]{Modified scattering for the cubic Schr{\"o}dinger equation on Diophantine waveguides}

\author{Nicolas Camps}

\address{Univ Rennes, CNRS, IRMAR - UMR 6625, 
F-35000 Rennes, France
}

\email{nicolas.camps@univ-rennes.fr}

\subjclass[2020]{35B40 primary}

\author{Gigliola Staffilani}
 \address{Department of Mathematics,  Massachusetts Institute of Technology,  77 Massachusetts Ave,  Cambridge,  MA 02139-4307 USA.
}

\email{gigliola@math.mit.edu.}

\keywords{Nonlinear Schr\"odinger equation, modified scattering, stability, small divisors, normal forms, irrational tori, energy cascade, weak turbulence, waveguide manifolds}

\date{\today}

\begin{abstract}
We consider the cubic Schrödinger equation posed on a product space subject to a generic Diophantine condition. Our analysis shows that the small-amplitude solutions undergo modified scattering to an effective dynamics governed by interactions that induce no growth of high-order Sobolev norms. This is in sharp contrast with the infinite energy cascade scenario observed by Hani--Pausader--Tzvetkov--Visciglia in the absence of  Diophantine conditions. 
 \end{abstract} 

\ \vskip -1cm  \hrule \vskip 1cm \vspace{-8pt}
 \maketitle 
{ \textwidth=4cm \hrule}

\setcounter{tocdepth}{1}
\tableofcontents

\section{Introduction}

\subsection{Motivations} The present work aims to test the persistence under domain perturbation of turbulent mechanisms observed in the long-time dynamics of certain solutions of nonlinear dispersive equations. On compact domains, the interplay between weaker dispersion and nonlinear resonances can lead to energy transfers from low to high frequency scales of oscillations as time evolves. These forward cascade mechanisms manifest themselves through the growth of the high-order Sobolev norms.  Bourgain's original formulation in \cite{Bou-conj-00} addresses the potential explosion of Sobolev norms in infinite time, known as \emph{the infinite energy cascade}. Despite considerable efforts to elucidate this conjecture, our understanding of the infinite-time behavior of nonlinear dispersive equations on compact domains is still incomplete.   

\medskip

One of the very few results constructing infinite energy cascades is due to Hani--Pausader--Tzvetkov--Visciglia \cite{HPTV15}. They consider the cubic defocusing Schrödinger equation posed on the product space $\R\times\T^{d}$, with $\T^{d}=\R^{d}/\Z^{d}$:
\begin{equation}
\tag{NLS}
\label{eq:nls_0}
i\partial_{t}U+\partial_{x}^{2}U + \Delta_{y}U = |U|^{2}U\,,\quad (x,y)\in\R\times\T^{d}\,.
\end{equation}
The dispersion of the Schr\"odinger free evolution along the non-compact direction $\mathbb{R}$ serves to dampen and scatter the non-resonant interactions, reducing the effective dynamics to a system governed by the four-waves resonant set of the cubic Schr\"odinger equation posed on the compact direction~$\mathbb{T}^{d}$. This enables the construction of small amplitude initial data that ignite a forward energy cascade (\cite[Corollary 1.4]{HPTV15}):
\medskip

\noindent{
\it For all $d\in\{2,3,4\}$ and $s>1$ large enough, there exists $U(0)$ with small amplitude~\footnote{The precise statement actually requires the smallness of a strong norm, with extra derivatives and space decay in the $\R$-direction} in $H^{s}(\R\times\T^{d})$ such that the corresponding solution $U$ is global and satisfies 
\[
\limsup_{t\to+\infty}\|U(t)\|_{H^{s}(\R\times\T^{d})}=+\infty\,.
\]

}
\medskip

In this work, we allow for a more general dispersion relation in the compact direction $\T^{d}$
\begin{equation}
\tag{A-NLS}
\label{eq:nls}
i\partial_{t}U + \partial_{x}^{2}U + \operatorname{div}(\mathrm{A}\nabla_{y})U = |U|^{2}U\,,\quad (x,y)\in\R\times\T^{d}\,.
\end{equation}
The symmetric positive definite constant matrix $\mathrm{A}$ encodes the dispersion relation, and we will denote
\[
\Delta_{\mathrm{A}}:=\partial_{x}^{2}+\operatorname{div}(\mathrm{A}\nabla_{y})\,.
\]
Changing the dispersion via $\mathrm{A}$ corresponds to rescaling $\mathbb{T}^{d}$ through a change of variables. For instance when $\mathrm{A}=\operatorname{Id}$, we retrieve the case of the square torus $\mathbb{T}^{d}$, while rectangular tori correspond to diagonal matrices $\mathrm{A}$. We will focus on generic matrices satisfying an explicit Diophantine condition presented in Definition \ref{def:ad}. They correspond to generic non-rectangular tori. 

\medskip

A natural question is whether the turbulent mechanisms observed in the case of the square torus \cite{HPTV15} persist under generic and non-periodic boundary conditions. 

\medskip

Our main results, stated in Theorem \ref{thm:main} and Theorem \ref{thm:ms}, address this question. When \eqref{eq:nls} is posed on a waveguide with a generic Diophantine property on~$\mathrm{A}$, we establish that there is no amplification of the Sobolev norms over infinite time for the solutions associated with small amplitude initial data. Hence, the asymptotic behavior of \eqref{eq:nls} on the product space~$\R\times\T^{d}$ with $d\geq2$ is very different when~$\T^{d}$ is replaced by  Diophantine tori.
This sharp dichotomy, which manifests clearly at the analytical level, is also a warning against using periodic boundary conditions to model dispersive equations on bounded domains. What we prove here is that any infinitesimal perturbation of a periodic domain may result in a dramatically different behavior of the energy spectrum of the solutions. 

\subsection{Context}  In this subsection we recall some previous related works and further discuss the main motivations.

\subsubsection{Growth of Sobolev norms for nonlinear Schrödinger equations on compact domains and waveguides.} The existence of infinite cascades realized via the growth of Sobolev norms has drawn considerable interest, and a survey of all the existing results goes much beyond the scope of this paper, hence here we recall only the works that are strictly related to it. 
 
\medskip

A line of research within the study of the infinite energy cascade consists in proving polynomial upper bounds for the growth of Sobolev norms~\footnote{The Cauchy theory and the conservation of the energy give exponential bounds} to quantify the rate at which energy transfers to high frequencies. This was done originally by Bourgain~\cite{Bou96IMRN}, and several other works have appeared in recent years
\cite{Staffilani1997, PTV17, Soinger2011, Soinger2011-1, Soinger2012}. On rectangular Diophantine tori \cite{DG19} improved the polynomial bounds, and \cite{D19} obtained polynomial bounds for the energy-critical quintic Schrödinger equation (no analogous result is known for the square torus). These results suggest that energy transfers, if they exist, take longer to occur when the boundary conditions are Diophantine rather than periodic. 

\medskip

As mentioned before, much less is known about the lower bounds and the infinite growth of the Sobolev norms. The infinite energy cascade evidenced in \cite{HPTV15} is a consequence of the analysis of the Toy Model, a subsystem of the four-waves resonant set. This model was introduced in \cite{CKSTT10} in the study of the cubic Schrödinger equation on the square torus $\T^{2}$ (see also \cite{CF12}). In that case, using a stability lemma, one could only partially transfer the growth dynamics of the Toy Model and prove a norm-amplification result in finite time:
\medskip

\noindent{\it {Given $s>1$, a small $\epsilon>0$ and a large $R>0$}, there exist $T>0$ and $u(0)\in H^{s}(\T^{2})$ such that the solution $u$ to \eqref{eq:nls_0} posed on $\T^{2}$ satisfies
\[
\|u(0)\|_{H^{s}(\T^{2})}\leq \epsilon\,,\quad \|u(T)\|_{H^{s}(\T^{2})}\geq R\,.
\]

}
Along these lines we also recall the work in \cite{Hani2019, Haus2016, Haus2015, GK15}. 
  On compact domains, transposing the growth of the resonant dynamics to the full equation requires a stability lemma, which restricts such norm amplification results to finite time-scales. A remarkable exception occurs for completely integrable models, such as the cubic Szeg\H{o} equation \cite{GG10} --- which arises as the resonant system of the half-wave equation --- whose integrable structure provides other mechanisms yielding growth of Sobolev norms in infinite time.

\subsubsection{Infinite energy cascade on waveguides} On the waveguide manifold, the dispersion along the $\R$ direction allows one to fully realize the growth predicted by the Toy Model over infinite time. The strategy developed in \cite{HPTV15} (also explained in \cite{HPTV14}) consists in propagating a \emph{weak norm} based on the energy space which is preserved by the effective resonant system, while allowing the stronger norms to slightly grow in time. This yields both modified scattering and the construction of modified wave operators, through which the turbulent dynamics predicted by the Toy Model can be transferred to the full equation \eqref{eq:nls_0}. The time-oscillations in the non-resonant interactions are easily~\footnote{Indeed, the resonance function for NLS on $\T^{d}$ (periodic boundary conditions) takes integer values. If it does not vanish then it takes integer values greater than 1, unlike in the present work where small divisor problems arise.} converted into decay via a Poincaré--Dulac normal form, while non-stationary phase arguments control space-oscillations, in the spirit of \cite{KP11}.   
\medskip

Subsequent works, such as \cite{Xu17,Che24},  adopt the same strategy for weakly dispersive equations, resulting in infinite energy cascades~\footnote{In these two works, however, the Cauchy theory at low regularity is not well understood and poses considerable problems. They could achieve the construction of modified wave operators, which is sufficient to prove infinite energy cascade, without proving the modified scattering result.}. Another notable example is \cite{DR19}, which extracts interesting dynamics with a beating effect from the resonant system of a coupled system of nonlinear Schrödinger equations.
\medskip

In the spirit of the present work, \cite{GPT16} considers the cubic Schrödinger equation perturbed by some convolution potential and prove that there is no norm amplification. For this model, the exact resonances are trivial, but the quasi-resonances lead to small divisor issues. Nevertheless, for generic choices of convolution potential they are controlled by the third largest frequency and can be easily handled. 
\medskip

In our setting (namely Diophantine tori without any convolution potential), however, the admissibility condition \eqref{eq:ad} provides a control on the small divisors by the highest frequency. This small divisor issue necessitates new approaches that will be discussed in Section \ref{sec:pr} below. 
\medskip

The authors of \cite{WY22} also consider the NLS equation posed on irrational waveguides. Following the strategy proposed in \cite{HPTV15}, they prove that some \emph{lower} Sobolev norms remain bounded in infinite time. By accepting such a loss of derivatives they are able to circumvent the aforementioned small divisor issues.  Their results do not isolate any effective dynamics, and do not exclude infinite cascade or norm amplification.

In contrast, we demonstrate modified scattering for small amplitude solutions towards an effective dynamics as detailed in Theorem \ref{thm:ms}. By analyzing the effective dynamics, we deduce that there is no norm amplification at the regularity of the initial data, as stated in Theorem \ref{thm:main}. 
\medskip

To conclude this subsection, we emphasize that on $\R^{n}\times\T^{d}$ with $n\geq2$ there is scattering \cite{TV12,TV16}. Indeed, if $n\geq2$ then all the interactions are attenuated by the stronger dispersion on $\R^{n}$ .  

\subsubsection{Nonlinear Schrödinger equations posed on Diophantine tori: longer time-scales?}

By considering irrational tori instead of the square torus, dispersion is enhanced and the refocusing time needed for the waves to explore the geometry is also longer. This was quantified by \cite{DGG17, DGGMR22} who obtained longer time versions of Strichartz's estimates on Diophantine rectangular and non-rectangular tori than in \cite{BD15}.

\medskip

Moreover, for the Schrödinger equation in \eqref{eq:nls_0} on irrational tori some exact resonant interactions are eliminated, whereas quasi-resonant interactions emerge: the resonance function is not valued in $\Z$ and can take arbitrarily small values without vanishing. In~\cite{GG22}, it was proved that the quasi-resonant interactions can also lead to turbulent mechanisms by extending the norm amplification result of \cite{CKSTT10} for small amplitude initial data to the case of some rectangular irrational tori in dimension $d=2$, after exponential time-scale (the Diophantine approximation condition for the length of the torus is detailed in \cite[paragraph 2.1]{GG22}). This result indicates that on irrational tori quasi-resonant interactions are dynamically relevant. 
\medskip

Nevertheless, the turbulent mechanisms are weaker in this setting. The first author, together with Bernier, recently developed in~\cite{BC24} a normal form approach establishing stability over
polynomial time-scales~\footnote{Namely over time-scales $\|u(0)\|_{H^{s}(\T^{d})}^{-r}$ for any prescribed order $r\geq2$.} for the cubic Schrödinger equation posed on flat tori satisfying the Diophantine condition \eqref{eq:ad}. In \cite{SW20}, the second author with Wilson proved that on rectangular irrational tori the resonant system is much smaller than on $\T^{2}$ and showed that the energy transfers observed in \cite{CF12} out of the Toy Model when finitely many modes are excited do not longer occur. Moreover in \cite{HPSW21} the second author with Hrabski, Pan and Wilson also gave some numerical evidence that
the high Sobolev norms should not grow as rapidly as the ones on a square torus. For more numerical results one should also consult \cite{Colliander-num, hrabski2020effect}. 
Finally, \cite{CG20} considers generic Diophantine tori in the context of wave kinetic theory to bypass the time-scales dictated by the resonant interactions. 
\subsection{Set up and main results}
\label{sec:thm}
We consider the equation \eqref{eq:nls} on the waveguide manifold with the following explicit Diophantine condition on the coefficients of $\mathrm{A}$. 
\begin{definition}
\label{def:ad}
The matrix $\mathrm{A}$ is admissible with parameters $\tau>0$ and~$c>0$ if for all $(a,b)\in(\Z^{d}\setminus\{0\})^{2}$, 
\begin{equation}
\label{eq:ad}
|a^{\intercal}\mathrm{A}b|\geq\frac{c}{|a|^{\tau}|b|^{\tau}}\,.
\end{equation} 
\end{definition}
\begin{remark}
The admissibility condition~\eqref{eq:ad} is closely related to classical Diophantine lower bounds for linear forms. In particular, an application of the Borel-Cantelli lemma (see also the convergence case of Khinchin~\cite{Khin}'s theorem) asserts that for Lebesgue-almost every $(\beta_i)_{2\le i\le d}\in[1,2]^{d-1}$ there exists $C>0$ such that for all
$k\in\Z^{d}\setminus\{0\}$,
\[
\bigl|k_{1}+\beta_{2}k_{2}+\cdots+\beta_{d}k_{d}\bigr|
\ge 
C\prod_{i=2}^{d}(1+|k_i|)^{-1}\bigl(\log(2+|k_i|)\bigr)^{-2}.
\]
This formulation appears for instance in~\cite[D2]{DGGMR22}. Such a lower bound implies a bound of the form
\[
|k_{1}+\beta_{2}k_{2}+\cdots+\beta_{d}k_{d}|
\gtrsim |k|^{-\tau}
\]
for any $\tau>d-1$. In this sense, admissibility with parameter $\tau>\frac{d(d+1)}{2}-1$ holds generically.
\end{remark}

The operator $-\operatorname{div}(\mathrm{A}\nabla)$ has discrete spectrum, with eigenvalues indexed by the points~$n\in\Z^{d}$:
\[
-\operatorname{div}(\mathrm{A}\nabla)\e^{in\cdot y} = \lambda_{n}^{2}\e^{in\cdot y}\,,\quad \text{with}\quad \lambda_{n}^{2}:= n^{\intercal}\mathrm{A}n = \sum_{i,j=1}^{d}n_{i}n_{j}\mathrm{A}_{i,j}\,.
\]
Since $\mathrm{A}$ is symmetric and positive definite,  there exist $c, C>0$ such that for all~$n\in\Z^{d}$,
\begin{equation}
\label{eq:co}
c|n|^{2} \leq \lambda_{n}^{2} \leq C|n|^{2}\,,\quad \text{where we recall that}\quad |n|^{2}:=\sum_{i=1}^{d}n_{i}^{2}\,.
\end{equation} 
The four-waves resonance function  is
\begin{align*}
\Omega(\vec{n}) &= \lambda_{n_{1}}^{2} - \lambda_{n_{2}}^{2}+\lambda_{n_{3}}^{2} - \lambda_{n}^{2}
\,.
\intertext{Under the zero-momentum condition $n=n_{1}-n_{2}+n_{3}$ it factorizes to}
\Omega(\vec{n}) &= 2(n_{1}-n_{2})^{\intercal}\mathrm{A}(n_{2}-n_{3})\,.
\end{align*}
This factorization motivates Definition \ref{def:ad} of the Diophantine admissibility condition for the matrix $\mathrm{A}$. Firstly, it eliminates the exact resonances, as proved in Lemma \ref{lem:4w} below. Secondly, it introduces quasi-resonant interactions but provides a quantitative polynomial decay of the resonance function. This gives hope to exploit the time-oscillations in  $H^{s}(\T^{d})$ despite the presence of small divisors, which appear in the normal forms method used to convert time-oscillations into decay in time. 
\medskip

Rectangular tori are not admissible. If the vector $(\mathrm{A}_{i,j})_{i,j}$ is Diophantine then $\mathrm{A}$ is admissible with a parameter~$\tau$ corresponding to the irrationality number of~$\mathrm{A}$, but this is not an equivalence. We refer to the introduction of~\cite{BC24} for more details on the admissibility condition.
\medskip

The precise definitions of the norms are given in Section \ref{sec:norm} below. At this stage, we only stress that the $S$-norm, which appears in the statement of our main theorems, controls $s$ derivatives in the compact direction $\T^{d}$ and $s+\sigma$ derivatives in the~$\R$ direction, for an arbitrarily small $\sigma>0$.  Other spaces  that we use in the statements are the Sobolev space $H^{s}(\R\times\T^{d})$
and the space $h^{s}(\T^{d})$, which is equivalent to the standard Sobolev space $H^{s}(\T^{d})$, as will be proved in Section \ref{sec:norm}.
We also anticipate that for $\tau>0$ and~$c(d)\in(0,2]$ given in  Lemma \ref{lem:sep},  the regularity threshold needed for our results  is
\begin{equation}
\label{eq:par}
s_{0}(\tau,d):=\frac{d}{2}+\frac{4\tau}{c(d)}\,.
\end{equation}
We are now ready to state the main theorems. 
\begin{theorem}
\label{thm:main} Let $d\geq2$ and $\mathrm{A}$ be admissible with parameter $\tau$. Given~$s>s_{0}(\tau,d)$, there exist $C>0$ and $\epsilon_{\ast}>0$ such that for all $\epsilon\in(0,\epsilon_{\ast})$, if 
\[
\|U(0)\|_{S} \leq \epsilon\,,
\]
then the solution $U$ to \eqref{eq:nls} with Cauchy data $U(0)$ exists globally in $C(\R;S)$. Moreover, for all $t\in\R$,
\begin{equation}
\label{eq:r1}
	\|U(t)\|_{H^{s}(\R\times\T^{d})}
	\leq C\epsilon\,,\quad
	\|U(t)\|_{L^{\infty}_x h^{s}_y(\R\times\T^{d})}
	\leq C(1+|t|)^{-\frac{1}{2}}\epsilon\,.
\end{equation}
\end{theorem}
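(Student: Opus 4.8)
The plan is to run the standard scheme of Hani--Pausader--Tzvetkov--Visciglia, adapted to the Diophantine setting by replacing integer-valued resonance gaps with the quantitative lower bound \eqref{eq:ad}. I would first set up a bootstrap in the $S$-norm: assume on a time interval $[0,T]$ that $\|U(t)\|_S \leq 2C\epsilon$ and then improve the constant. The first step is a gauge transform / normal form on the Duhamel profile $f = e^{-it(\partial_x^2 + \operatorname{div}(A\nabla_y))}U$. Writing the cubic interaction in Fourier modes on $\T^d$ and using the dispersive factor $e^{it\Omega(\vec n)}$, one splits the nonlinearity into the resonant part ($\Omega(\vec n)=0$, which by Lemma \ref{lem:4w} and \eqref{eq:ad} forces the trivial resonances $\{n_1,n_3\}=\{n_2,n\}$) and the non-resonant part. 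For the non-resonant part, integration by parts in time (a Poincar\'e--Dulac normal form) produces a factor $1/\Omega(\vec n)$, and here the admissibility condition \eqref{eq:ad} gives $|\Omega(\vec n)| = |(n_1-n_2)^\intercal A(n_2-n_3)| \gtrsim (|n_1-n_2||n_2-n_3|)^{-\tau}$, so we lose $\tau$ derivatives on \emph{each} of two frequency differences rather than on the third-largest frequency. This is the crucial departure from \cite{HPTV15} and \cite{GPT16}, and it is why the regularity threshold $s_0(\tau,d)$ in \eqref{eq:par} carries the factor $4\tau/c(d)$: the separation lemma (Lemma \ref{lem:sep}) must be invoked to convert these two small-divisor losses, measured on frequency \emph{differences}, into a controllable loss on the output high frequency, absorbing a polynomial power of the torus geometry via $c(d)$.

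The second step is the dispersive analysis along the $\R$-direction. One decomposes $U$ via the $TT^*$/profile decomposition of the 1D Schr\"odinger flow: at frequency-localized scales, $e^{it\partial_x^2}$ gives $t^{-1/2}$ decay with a stationary-phase profile $\sim t^{-1/2} e^{ix^2/4t} \widehat{f}(x/2t)$. The resonant system governing the effective dynamics is then the cubic resonant equation on $\T^d$ coupled to the slow $\R$-variable as a parameter; one shows $U(t)$ stays close (in $H^s(\R\times\T^d)$, and with $t^{-1/2}$ rate in $L^\infty_x h^s$) to $e^{it(\partial_x^2+\operatorname{div}(A\nabla))}\mathcal{R}[f]$ where $\mathcal{R}$ is the resonant flow. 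The key point for the norm bounds \eqref{eq:r1} is that the resonant system here \emph{conserves} all $h^s(\T^d)$ norms of the profile — because the only surviving resonances are the trivial ones, the resonant vector field is essentially a phase rotation on each Fourier mode — so iterating the a priori bound yields no growth, only the global-in-time control $\|U(t)\|_{H^s} \leq C\epsilon$. This is precisely the mechanism that is absent on $\T^d$ itself, where the genuine (Toy Model) resonances drive the cascade.

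The third step is to close the bootstrap: combine the normal-form gain (which produces an integrable-in-time remainder after paying $s - s_0 > 0$ derivatives against the small divisors) with the $t^{-1/2}$ dispersive decay and a bilinear/trilinear estimate on the non-stationary-phase part (\`a la \cite{KP11}) controlling the $x$-oscillations. Summing the geometric series in $\epsilon$ improves $2C\epsilon$ back to $C\epsilon$ on $[0,T]$, and since the bound is uniform in $T$ the solution extends globally and the estimates \eqref{eq:r1} hold on all of $\R$. One should also record the standard local well-posedness in $S$ to start the continuity argument.

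The main obstacle I expect is precisely the small-divisor bookkeeping in Step~1: unlike the $\T^d$ case where $|\Omega| \geq 1$ off resonance, and unlike \cite{GPT16} where the third-largest frequency controls everything, here the decay of $\Omega$ is only polynomial in the two frequency differences $n_1-n_2$ and $n_2-n_3$, which can both be comparable to the highest frequency. Making the normal form converge therefore requires trading a definite number of derivatives ($\sim 4\tau/c(d)$ above $d/2$) against these divisors while simultaneously keeping the output in $h^s(\T^d)$ and not just in some much weaker space — the interplay with the separation lemma \ref{lem:sep} and the need to handle the near-resonant regime $|\Omega|$ small but nonzero (where one cannot integrate by parts and must instead exploit the measure of the near-resonant set, as in \cite{DGG17, DGGMR22, BC24}) is where the real work lies. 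A secondary difficulty is propagating the slightly-better $S$-norm with only $\sigma>0$ extra derivatives in $x$ (rather than the larger margin in \cite{HPTV15}), which tightens the dispersive estimates one is allowed to use.
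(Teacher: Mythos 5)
The core scheme you describe (profile equation, split into space-resonant and space-non-resonant parts, Poincar\'e--Dulac normal form on the time-oscillating terms, dispersive decay via the stationary-phase profile, bootstrap in an $X_T$-norm) is indeed the paper's, and you correctly identify both the cluster-based $h^s(\T^d)$-norm and the separation Lemma \ref{lem:sep} as the central tools. But there is a genuine gap in the middle of your argument, precisely at the point you yourself flag as ``where the real work lies.''

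You write that ``the resonant system here conserves all $h^s(\T^d)$ norms of the profile --- because the only surviving resonances are the trivial ones, the resonant vector field is essentially a phase rotation on each Fourier mode'' and then propose to dispose of the near-resonant regime ($|\Omega|$ small but nonzero) by ``exploit[ing] the measure of the near-resonant set'' in the style of Strichartz-type counting. This is not what happens, and the measure-theoretic route would not close. After the exact resonances are killed by Lemma \ref{lem:4w}, the Poincar\'e--Dulac normal form pays $|\Omega(\vec n)|^{-1}$; the separation Lemma \ref{lem:sep} converts that loss into a controllable power of $n_2^\ast$ \emph{only when} the outgoing high frequency $n$ and the incoming high frequency $n_1$ lie in \emph{different} dyadic clusters. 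When they lie in the \emph{same} cluster (the high $\times$ low $\to$ high regime with $|\Omega(\vec n)|<1$, cf.~Definition \ref{def:Lamb} of $\Lambda^{(1)}(n)$, $\Lambda^{(3)}(n)$), the separation estimate gives nothing, the small divisor is unbounded, and integration by parts in time fails outright --- no measure argument saves you, since there is no integrable time decay to extract. The paper's key move is the opposite of removing these terms: it \emph{keeps} them, defining the effective system as $\mathcal{N}^t_{\mathrm{eff}} = \mathcal{R}^t + \mathcal{Q}^t$ in \eqref{eq:eff}, where $\mathcal{Q}^t$ in \eqref{eq:Q} is exactly this intra-cluster quasi-resonant piece. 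The modified-scattering target is therefore \emph{not} the trivial resonant system, and the conservation of the cluster-based $\|\cdot\|_{h^s}$ norm (Lemma \ref{lem:cons} / Lemma \ref{lem:eff}) is a non-trivial cancellation: $\mathcal{Q}^t$ moves energy only \emph{within} a cluster $\mathscr{C}_\alpha$, so the super-action $\|\pi_\alpha \widehat{G}(\xi)\|_{\ell^2}^2$ is preserved even though $\mathcal{Q}^t$ is genuinely nonlinear and non-trivially time-dependent. If you insist on modified scattering to the trivial resonant system alone, the remainder estimate breaks at exactly the intra-cluster near-resonances and the bootstrap does not close. You would need to rediscover the trichotomy of Definition \ref{def:Lamb} and the super-action conservation of Lemma \ref{lem:cons} to repair the argument.

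A secondary, smaller point: the small-divisor loss is not distributed onto ``each of two frequency differences'' in the way you describe. After the case analysis of Lemma \ref{lem:small-div}, the conclusion is $|\Omega(\vec n)|^{-1} K_\alpha^s \lesssim (n_1^\ast)^s (n_2^\ast)^{4\tau/c(d)}$, i.e.~the full $s$-weight goes onto the top frequency and the small-divisor penalty lands entirely on $n_2^\ast$ (with the $1/c(d)$ degradation coming from Lemma \ref{lem:sep}); this is what makes the convolution estimate in Lemma \ref{lem:fast} close under $s > d/2 + 4\tau/c(d)$.
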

In particular~\eqref{eq:r1} shows that the solution retains the dispersive decay rate~$t^{-\frac{1}{2}}$ inherited from the unbounded direction.  The proof of Theorem \ref{thm:main} is based on the dynamics of an effective system presented below in~\eqref{eq:eff} and ~\eqref{eq:s-eff}, to which the small amplitude solutions undergo modified scattering.  The effective system does not only contain the resonant interactions (which are trivial when $\mathrm{A}$ is admissible), but also some specific quasi-resonant interactions described in~\eqref{eq:Q}.  
\begin{theorem}[Modified scattering] 
\label{thm:ms}
Under the same assumptions as in Theorem~\ref{thm:main}, we have that for all $U(0)$ with 
\[
\|U(0)\|_{S}\leq\epsilon\,,
\]
there exists a global solution $G\in C([1,\infty);H^{s}(\R\times\T^{d}))$ to the system \eqref{eq:s-eff} on $[1,\infty)$ such that
\[
\sup_{t\geq1}\|G(t)\|_{Z}+\|G(1)\|_{H^{s}(\R\times\T^{d})}\lesssim\epsilon
\]
and
\[
\lim_{t\to+\infty}\|\e^{-it\Delta_{\mathrm{A}}}U(t)-G(t)\|_{H^{s}(\R\times\T^{d})} = 0\,.
\]
\end{theorem}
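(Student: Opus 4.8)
The plan is to package the global existence and a priori bounds from Theorem \ref{thm:main} together with a bootstrap estimate on the difference between the profile of $U$ and the flow of the effective system \eqref{eq:s-eff}, and then upgrade this to convergence. First I would introduce the profile $V(t):=\e^{-it\Delta}U(t)$, where $\Delta=\partial_x^2+\mathrm{div}(\mathrm{A}\nabla_y)$ is the full linear operator, so that \eqref{eq:nls} becomes an equation $i\partial_t V = \mathcal{N}(V)$ whose right-hand side is a trilinear form in $V$ with fast oscillatory factors $\e^{it\Omega(\vec n)}$ in the $\T^d$-frequencies and the usual $\e^{i t (\text{space phase})/ \ldots}$ coming from the $\R$-direction. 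The effective system \eqref{eq:s-eff} is obtained from this by retaining only the (trivial) resonant set together with the selected quasi-resonant interactions \eqref{eq:Q}; write $G$ for its global solution with $\|G(0)\|_{H^s}\lesssim\epsilon$, which exists by the same fixed-point/energy argument since the effective nonlinearity is a sub-collection of the terms already controlled.

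Next I would set $W:=V-G$ and derive the difference equation $i\partial_t W = \mathcal{N}(V)-\mathcal{N}_{\mathrm{eff}}(G)$. Splitting $\mathcal{N}(V)-\mathcal{N}_{\mathrm{eff}}(G)=\big(\mathcal{N}(V)-\mathcal{N}_{\mathrm{eff}}(V)\big)+\big(\mathcal{N}_{\mathrm{eff}}(V)-\mathcal{N}_{\mathrm{eff}}(G)\big)$, the second piece is trilinear and, by multilinearity and the a priori bounds $\|V(t)\|_{H^s}\lesssim\epsilon$ and $\|V(t)\|_{L^\infty h^s}\lesssim \epsilon\japbrak{t}^{-1/2}$ from Theorem \ref{thm:main} (and the analogous bounds for $G$), contributes a term bounded in $H^s$ by $\epsilon^2\japbrak{t}^{-1}\|W(t)\|_{H^s}$, which is integrable in time and handled by Gronwall. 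The first piece, $\mathcal{N}(V)-\mathcal{N}_{\mathrm{eff}}(V)$, consists exactly of the non-resonant and the discarded quasi-resonant interactions; here I would integrate by parts in time using the normal-form transformation already employed in the proof of Theorem \ref{thm:main} (a Poincaré--Dulac normal form in the $\T^d$-oscillations, valid because the admissibility condition \eqref{eq:ad} gives the polynomial lower bound $|\Omega(\vec n)|\gtrsim |n_{\max}|^{-2\tau}$ away from the resonant set) together with the non-stationary-phase/dispersive control of the $\R$-oscillations. This produces a boundary term that is $\bigo{\epsilon^2 \japbrak{t}^{-\delta}}$ in $H^s$ for some $\delta>0$ (the small-divisor loss is absorbed by the surplus of derivatives encoded in the threshold $s_0(\tau,d)$ in \eqref{eq:par}), plus a remaining integrand that is again time-integrable against the $\japbrak{t}^{-1}$ dispersive gain. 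Feeding all of this into the integral equation for $W$ and running a standard continuity argument gives $\|W(t)\|_{H^s}\lesssim \epsilon^2\japbrak{t}^{-\delta}$ for all $t$, after first fixing $W(0)$ appropriately (one chooses $G(0)$ so that $W(0)=0$, or one shows $W(t)$ is Cauchy as $t\to+\infty$ and defines $G$ from the limiting profile).

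Finally, since $\|\e^{-it\Delta}U(t)-G(t)\|_{H^s}=\|W(t)\|_{H^s}\lesssim\epsilon^2\japbrak{t}^{-\delta}\to 0$ as $t\to+\infty$, the theorem follows. I expect the main obstacle to be the first difference piece $\mathcal{N}(V)-\mathcal{N}_{\mathrm{eff}}(V)$: one must check that every interaction not retained in \eqref{eq:s-eff} — in particular the quasi-resonant ones with arbitrarily small but non-zero $\Omega(\vec n)$ — genuinely decays, and the only leverage is the polynomial bound \eqref{eq:ad} with its loss of $|n_{\max}|^{2\tau}$ derivatives; reconciling that loss with the finite regularity $s$ is exactly what forces the threshold $s_0(\tau,d)=\tfrac d2+\tfrac{4\tau}{c(d)}$, and making the normal-form iteration close at this regularity (rather than requiring $s$ arbitrarily large as in \cite{HPTV15}) is the delicate point. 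The $\R$-direction oscillations, by contrast, are controlled exactly as in \cite{HPTV15,KP11} and pose no new difficulty, and the $S$-norm bound (with only $\sigma$ extra derivatives in $x$) is propagated by the same bootstrap.
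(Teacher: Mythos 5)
Your plan correctly identifies all the analytic ingredients (Poincar\'e--Dulac normal form on the $\T^d$-oscillations, space non-resonant/dispersive control in $\R$, Gronwall on the trilinear difference), and the target estimate $\|F(t)-G(t)\|_{H^s}\lesssim\epsilon^3\langle t\rangle^{-\delta}$ is the right one. But the construction of the scattering state $G$ — which is really the heart of a modified-scattering proof — is handled incorrectly, and this is a genuine gap.

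You offer two options for fixing $G$: (a) ``choose $G(0)$ so that $W(0)=0$'' or (b) ``show $W(t)$ is Cauchy as $t\to+\infty$ and define $G$ from the limiting profile.'' Option (a) is simply false. If you set $G(0)=V(0)$ and integrate the difference equation $i\partial_t W=(\mathcal{N}-\mathcal{N}_{\mathrm{eff}})(V)+(\mathcal{N}_{\mathrm{eff}}(V)-\mathcal{N}_{\mathrm{eff}}(G))$ from $0$ forward, the normal-form integration by parts produces a boundary term at the lower limit of integration that does \emph{not} decay in $t$; the best you get is that $W(t)$ converges to some $W_\infty=O(\epsilon^3)\neq 0$. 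That is a wave-operator statement, not a scattering statement. To obtain $\|W(t)\|\to 0$ you must select $G$ by its \emph{asymptotic} behavior, not by its initial data, which is exactly why the paper works backwards: it defines $G_n$ as the global solution of \eqref{eq:s-eff} matched to $F$ at late times $T_n=\e^n$ (so $G_n(T_n)=F(T_n)$), proves the key comparison estimate $\|F(t)-G_n(t)\|_{H^s}\lesssim\epsilon^3 T_n^{-\delta}$ for $T_n\le t\le 4T_n$ using Lemma \ref{lem:Poi-Dul}, Lemma \ref{lem:so}, and the conservation laws of Lemma \ref{lem:eff}, then shows $(G_n(0))_n$ is Cauchy in $H^s$ with geometric rate $\epsilon^3\e^{-\delta n/2}$ by a Gronwall argument for the effective flow, and finally defines $G$ from the limit $G_\infty(0)$. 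Your option (b) gestures at this, but the object that must be shown Cauchy is the sequence of solutions of the effective system shadowing $F$ at successively later times, not $W(t)$ itself, and the Cauchy estimate crucially uses the exponential spacing $T_{n+1}=\e T_n\le 4T_n$ to beat the $t^{C\epsilon^2}$ Gronwall loss. This construction needs to be spelled out; as written, your proof skips it.

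A smaller issue: when you say the quadratic-in-$(V,G)$ piece ``contributes a term bounded in $H^s$ by $\epsilon^2\langle t\rangle^{-1}\|W(t)\|_{H^s}$, which is integrable in time and handled by Gronwall,'' note that $\langle t\rangle^{-1}$ is \emph{not} integrable on $[1,\infty)$; the Gronwall factor grows like $t^{C\epsilon^2}$. The paper accommodates this by requiring the source term to decay like $T_n^{-2\delta}$ before Gronwall and by only needing the comparison on each dyadic block $[T_n,4T_n]$, and then choosing $\epsilon$ small relative to $\delta$. Your final bound $\epsilon^2\langle t\rangle^{-\delta}$ is consistent with that bookkeeping, but you should not describe the logarithmic loss as harmless integrability; it is precisely what forces the block-by-block argument and the $\epsilon$-smallness constraint.
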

We make some remarks.
\begin{remark} Infinite energy cascades are excluded, in sharp contrast to the case of the square torus studied in \cite{HPTV15}. To the best of the authors' knowledge, Theorem~\ref{thm:main} is the first result that shows that  the dynamics in infinite time is completely different on Diophantine tori from that  on the square torus. This indicates that the growth mechanism established in \cite{HPTV15} depends on the periodic boundary conditions and is somehow unstable.  
\end{remark}
\begin{remark}The $Z$-norm, that will be defined in  Section \ref{sec:norm} and used for the proofs of the theorems stated above, is the strongest norm preserved by the effective system. Unlike the case of the square torus \cite{HPTV15} this norm is not based on the energy space but on $H^{s}(\T^{d})$, for $s>\frac{d}{2}$. As a result, we no longer have low regularity issues and our results cover all the dimensions $d\geq2$. This also simplifies the analysis compared to~\cite{HPTV15}.
\end{remark}
\begin{remark} The case when $d=1$, which is also covered by the proof, is less interesting since there is no quasi-resonant or resonant interactions: there is modified scattering to the (trivial) resonant system \eqref{eq:R0}, as in the case with the convolution potential~\cite{GPT16}.
\end{remark}
\begin{remark}The regularity threshold $s>s_{0}(\tau,d)=\frac{d}{2}+\frac{4\tau}{c(d)}$ depends on the Diophantine properties of the dispersion relation (through the parameter $\tau$) and on the dimension  $d$ (through the constant $c(d)$ related to the frequency separation property given in Lemma \ref{lem:sep}). Since $c(d)\leq 2$ we have 
\[
s_{0}(\tau,d)\geq\frac{d}{2}+2\tau\,.
\]
One can expect to save some of the $\frac{d}{2}$-loss by using Strichartz's estimates from~\cite{BD15}. However, it is not clear how to avoid the loss $2\tau$ due to the quasi-resonant interactions. Since this loss is dominant~\footnote{To have a generic condition we need to suppose $\tau>\frac{d(d+1)}{2}-1$}, we prefer not to use the machinery of Strichartz's estimates. In the unbounded direction $\R$, we are able to make the loss of regularity arbitrarily small.
\end{remark}

\subsection{Idea of the proof}
\label{sec:pr}

The overall strategy is inspired from the analysis developed in \cite{HPTV15} and also in \cite{KP11}, with two important differences. 
\medskip

\noindent$-$ Firstly, the analysis of the time non-resonant interactions is more complicated due to small divisor problems. For some quasi-resonant interactions, we can still perform a Poincaré--Dulac normal form reduction to obtain an integrable decay in time, thanks to a frequency separation property based on a cluster decomposition recalled in Lemma \ref{lem:sep}. 
The remaining quasi-resonant interactions, which cannot be treated with a normal form method, contribute to the effective system. These non-resonant contributions are precisely the~high~$\times$~low~$\to$~high frequency interactions, where the incoming high frequency and the outgoing frequency are in the same dyadic cluster.  We give a precise definition of the effective system in \eqref{eq:Q}.

\medskip

\noindent$-$ Secondly, we show that these effective interactions do not trigger any growth of the Sobolev norms. Since the incoming high frequency and the outgoing frequency belong to the same dyadic cluster, we construct in Lemma \ref{lem:cons} an exact conserved quantity for the effective system at the level of the $h^{s}(\T^{d})$ norm.

Building on this stability, we propose a general strategy that leverages the dispersive decay together with the conservation of the Sobolev norms to prove modified scattering. Specifically, we make use of the natural dispersive estimate claimed in Theorem \ref{thm:main} to simplify the analysis of the space non-resonant interactions, and to give a concise self-contained proof. This allows us to prove the result in any dimension, and to have an arbitrarily small loss of derivatives in the $\R$-direction compared to the strong norm.

\subsection{Organization of the paper}
After introducing the Hamiltonian system in the transverse direction and exploiting the frequency separation property in Section \ref{sec:1}, we analyze nonlinear interactions in Section \ref{sec:non}, establishing bounds with suitable decay in time. Finally, in Section \ref{sec:a-priori}, we establish a priori bounds for the time-dependent norm, as presented in Proposition \ref{prop:a-priori}. These bounds yield the global existence result and the dispersive decay \eqref{eq:r1} asserted in Theorem \ref{thm:main}. Additionally, we prove Theorem~\ref{thm:ms} and deduce the boundedness of the $H^{s}$-norm, as also claimed in Theorem \ref{thm:main}.

\subsection{Notations}
For $F\in L^{2}(\R\times\T^{d})$, we define the (partial) Fourier transform:  for~$y\in\T^{d}$,
\begin{align*}
\xi\in\R\quad \mapsto\quad \widehat{F}(\xi,y) 
	&= (2\pi)^{-1}\int_{\R}\e^{-i\xi x}F(x,y)\mathrm{d}x\,
\intertext{and, for $x\in\R$,}
n\in\Z^{d}\quad \mapsto\quad F_{n}(x) 
	&= (2\pi)^{-d}\int_{\T^{d}}\e^{-in\cdot y}F(x,y)\mathrm{d}y\,.
\end{align*}
The (full) spatial Fourier transform of $F$ is 
\begin{equation}
\label{eq:f-f}
(\xi,n)\in\R\times\Z^{d}\ \mapsto\ (\mathcal{F}F)(\xi,n) := (2\pi)^{-(d+1)}\int_{\R\times\T^{d}}\e^{-i(\xi x+n\cdot y)}F(x,y)\mathrm{d}x\mathrm{d}y\,.
\end{equation}
Following \cite{HPTV15}, we write for $(\xi,n)\in\R\times\Z^{d}$
\[
\widehat{F}_{n}(\xi) := (\mathcal{F}F)(\xi,n)\,,\quad
\text{so that}
\quad
\widehat{F}(\xi,y)=\sum_{n\in\Z^{d}}\widehat{F}_{n}(\xi)\e^{in\cdot y}\,.
\]
Given three incoming waves $(n_{1},n_{2},n_{3})\in (\Z^{d})^{3}$, we define the decreasing rearrangement of $(|n_{1}|,|n_{2}|,|n_{3}|)$ by
\[
n_{1}^{\ast}\geq n_{2}^{\ast} \geq n_{3}^{\ast}\,.
\]
We denote by $n=n_{1}-n_{2}+n_{3}$ the outgoing wave and by $\vec{n}=(n_{1},n_{2},n_{3},n)\in(\Z^{d})^{4}$ the four interacting waves. 

\subsection*{Acknowledgments}
The authors thank the anonymous referees for their careful reading of the manuscript and for their valuable suggestions.
During the preparation of this work N.C. benefited from the support of the European research Council (ERC) under the European Union’s Horizon 2020 research and innovation programme (Grant agreement 101097172 - GEOEDP), the Centre Henri Lebesgue ANR-11-LABX-0020-0, the region Pays de la Loire through the project MasCan, the ANR project KEN ANR-22-CE40-0016.
G.S. was supported by the NSF grant DMS-2052651 and the Simons Foundation Collaboration  Grant on Wave Turbulence. 

\section{Preparations}
\label{sec:1}

\subsection{Clustering of the eigenvalues}
\label{subsec:tori}

We recall the frequency separation property, which plays a key role in the present work. This property was proved by Bourgain \cite[Lemma 9.29]{Bou98} on $\T^{d}$ (see also \cite{Bou99-cluster,Del10}) when $\mathrm{A}=\operatorname{Id}$, and generalized by Berti and Maspero~\cite{BM19} for all $\mathrm{A}$ symmetric and positive definite. The approach was recently introduced in the setting of nonlinear PDEs by~\cite{BFM24}, and further developed in~\cite{BC24} to justify finite-dimensional approximations in Birkhoff normal form constructions. 

\begin{lemma}[Frequency separation, Theorem 2.1 in \cite{BM19}]
\label{lem:sep}
There exist~$c(d)\in(0,2]$ and $C(A,d)\geq2$, and a partition of $\Z^{d}$ into
disjoint clusters $(\mathscr{C}_{\alpha})_{\alpha\geq 0}$ satisfying the following:
\begin{enumerate}
\item The first cluster contains the origin $0\in\mathscr{C}_{0}$ and is bounded:
\[
\max_{n\in \mathscr{C}_{0}}|n|\leq C(A,d)\,.
\]
\item The other clusters are dyadic: for all $\alpha\geq1$,
\[
\max_{n\in\mathscr{C}_{\alpha}} |n|\leq 2\min_{n\in\mathscr{C}_{\alpha}} |n|\,.
\]

\item For all $(n_{1},n_{2})\in\mathscr{C}_{\alpha_{1}}\times\mathscr{C}_{\alpha_{2}}$ with $\alpha_{1}\neq\alpha_{2}$,
\begin{equation}
\label{eq:sep}
|n_{1}-n_{2}|  + |\lambda_{n_{1}}^{2}-\lambda_{n_{2}}^{2}| > (|n_{1}|+|n_{2}|)^{c(d)}\,.
\end{equation}
\end{enumerate}
\end{lemma}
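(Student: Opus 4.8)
Since this is Theorem~2.1 of \cite{BM19} --- which generalises the clustering lemma of Bourgain \cite{Bou99-cluster} (the case $\mathrm{A}=\operatorname{Id}$) to an arbitrary symmetric positive definite $\mathrm{A}$ --- I would follow that strategy. The plan is to build the partition from the connected components of a proximity graph on $\Z^{d}$: fix a small parameter $c(d)>0$ (which will turn out to satisfy $c(d)\leq2$), to be constrained later, join $n$ to $m$ by an edge whenever
\[
|n-m|+\big|\lambda_{n}^{2}-\lambda_{m}^{2}\big|\leq(|n|+|m|)^{c(d)}\,,
\]
and take the clusters $(\mathscr{C}_{\alpha})_{\alpha\geq0}$ to be the connected components, with $\mathscr{C}_{0}$ the component containing the origin. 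I would additionally inflate the right-hand side to a large $\mathrm{A}$-dependent constant on a bounded neighbourhood of $0$, so that $\mathscr{C}_{0}$ absorbs all the ``finite-scale'' configurations. With this definition property~$(3)$ is essentially immediate: two points in distinct components are, in particular, not joined by an edge, which is precisely the strict inequality \eqref{eq:sep}, and the inflation near the origin guarantees that $\mathscr{C}_{0}$ too satisfies \eqref{eq:sep} against every other cluster. What remains --- and is the substance of the lemma --- is to control the size of the components, i.e.\ to establish $(1)$ and $(2)$.

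For that I would read a path $n_{0}\sim n_{1}\sim\cdots\sim n_{L}$ as a walk whose steps are short both in $\Z^{d}$ and in the eigenvalue $\lambda_{n}^{2}=n^{\intercal}\mathrm{A}n$: along any stretch staying at a scale $|n|\sim N$, each step moves the position by $\lesssim N^{c(d)}$ and $\lambda_{n}^{2}$ by $\lesssim N^{c(d)}$. The crucial input is a sharp upper bound for the number of $n\in\Z^{d}$ with $|n|\lesssim N$ lying in a thin ellipsoidal shell $\{\,n:|\lambda_{n}^{2}-E|\leq N^{c(d)}\,\}$ (with $E\sim N^{2}$); this relies on the level sets of $n\mapsto\lambda_{n}^{2}$ being genuine ellipsoids, because $\mathrm{A}$ is positive definite, together with classical estimates for lattice points near quadrics, and it is what forces $c(d)$ to be small --- but small \emph{in terms of $d$ only}, since $\mathrm{A}$ enters solely through the comparison constants of \eqref{eq:co}. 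Combining this count with the structure of the walk, one obtains that once $N>C(A,d)$ any connected component containing a point of norm $\sim N$ satisfies $\max|n|\leq2\min|n|$, which is exactly $(2)$; meanwhile all the anomalies that survive at bounded scales --- most conspicuously, two lattice points lying on one and the same very eccentric ellipsoid but with Euclidean norms in ratio $>2$ --- occur only for $|n|\lesssim C(A,d)$ and are absorbed into $\mathscr{C}_{0}$, whose radius $C(A,d)$ is accordingly dictated by the eccentricity of $\mathrm{A}$.

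The main obstacle is precisely this last point: bounding the diameters of the clusters, uniformly in $\mathrm{A}$. The clusters cannot simply be taken to be dyadic annuli --- the separation $(3)$ would then fail across a boundary radius for the pairs of lattice points that happen to straddle it while being close in both position and in $\lambda^{2}$ --- so the clusters are genuinely irregular sets, and their diameters have to be controlled through the counting estimate rather than read off the geometry of annuli. Running this counting for the eccentric ellipsoids $\{\,n:\lambda_{n}^{2}=E\,\}$, rather than for the spheres of \cite{Bou99-cluster}, is exactly the additional work carried out in \cite{BM19}; it is also what confines the matrix dependence to $C(A,d)$ and keeps it out of $c(d)$.
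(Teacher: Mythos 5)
The paper does not prove this lemma. It is stated explicitly as ``Theorem 2.1 in \cite{BM19}'' and the surrounding text (``This property was proved by Bourgain \cite{Bou99-cluster} on $\T^d$ for the flat Laplacian\dots and generalized by Berti and Maspero \cite{BM19}'') makes clear that it is imported as a black box. There is therefore no in-paper proof to compare your proposal against; the only honest answer the paper expects here is the citation.

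Your sketch correctly recognises this and gives a reasonable account of the strategy behind the cited result: the proximity graph whose edges are the pairs violating \eqref{eq:sep}, clusters as connected components (which makes property~(3) tautological), and a lattice-point count near ellipsoidal level sets of $n\mapsto n^{\intercal}\mathrm{A}n$ as the engine for bounding cluster diameters and hence for property~(2), with the anomalies at bounded scale swept into $\mathscr{C}_0$. That is the right picture, and you are also right that the $\mathrm{A}$-dependence should be isolated in the constant $C(\mathrm{A},d)$ while $c(d)$ depends on $d$ alone. However, as written this is an outline rather than a proof. The crucial step --- the counting estimate for lattice points of size $\sim N$ in a shell $\{\,|\lambda_n^2-E|\lesssim N^{c(d)}\,\}$, and the combinatorial argument converting that count into a diameter bound for connected components --- is only asserted. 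That estimate is the entire content of \cite{BM19}, and without it nothing in the sketch is verified. If the task were actually to prove the lemma rather than cite it, the proposal would be a plan, not a proof.
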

The last condition is a separation property that plays a key role in the analysis, as it allows one to control part of the quasi-resonant interactions (see Lemma~\ref{lem:small-div}).
\subsection{Norms}\label{sec:norm} 
We fix the regularity $s>s_{0}$ (see \eqref{eq:par}) as in the statement of Theorem~\ref{thm:main}. In our analysis, the relevant Sobolev norms in the compact direction are based on the cluster decomposition. We prove in Lemma \ref{lem:cons} that they are preserved by the effective system defined in~\eqref{eq:s-eff}. 
\begin{definition}[Sobolev norms based on the clusters]
\label{def:sob}
We define 
\[
K_{0}:=1\,,\quad K_{\alpha}:=\min_{n\in\mathscr{C}_{\alpha}}|n|\quad \text{for}\ \alpha\geq1\,,
\]
We index the nonzero clusters so that the sequence $(K_{\alpha})_{\alpha\geq1}$ is nondecreasing.
and $\pi_{\alpha}$ is the orthogonal projector onto $\operatorname{span}\ \{e^{in\cdot y}\}_{n\in\mathscr{C}_{\alpha}}$: for a function $u\in \ell^{2}(\T^{d})$, 
\[
\pi_{\alpha}u(y) = \sum_{n\in\mathscr{C}_{\alpha}}u_{n}\e^{in\cdot y}\,,\quad u_{n} = \frac{1}{(2\pi)^{d}}\int_{\T^{d}}\e^{-in\cdot y}u(y)\mathrm{d}y\,.
\]
For $s\in\R$, we define the Sobolev norms for functions on $\T^{d}$ and on $\R\times\T^{d}$ by 
 \begin{align}
\label{eq:norm-t}
\|u\|_{h^{s}(\T^{d})} &:=
	 \big(
	\sum_{\alpha\geq0} K_{\alpha}^{2s}\|\pi_{\alpha}u\|_{\ell^{2}(\T^{d})}^{2}
	 \big)^{\frac{1}{2}}
	 =
	 \big(\sum_{\alpha\geq0}K_{\alpha}^{2s}\sum_{n\in\mathscr{C}_{\alpha}}|u_{n}|^{2}\big)^{\frac{1}{2}}\,,
	 \\
	 	\label{eq:H}
	\|U\|_{H_{x,y}^{s}(\R\times\T^{d})} &:= (\|U\|_{L_{x}^{2}h_{y}^{s}(\R\times\T^{d})}^{2}+\|U\|_{H_{x}^{s}\ell_{y}^{2}(\R\times\T^{d})}^{2})^{\frac{1}{2}}\,.
\end{align}
\end{definition}
\begin{remark}
For all $n\in\Z^{d}\setminus\{0\}$ and $\alpha\ge1$ such that $n\in\mathscr{C}_{\alpha}$, we have
\[
K_{\alpha}\le |n|\le 2K_{\alpha}.
\]
It follows that the Sobolev norms defined in~\eqref{eq:norm-t} are equivalent to the standard Sobolev norms.
\end{remark}
The motivation for introducing the equivalent Sobolev norm $h^{s}(\T^{d})$ is that it does not detect energy transfers within a given cluster. Accordingly, we expect this norm to remain bounded under the effective dynamics~\eqref{eq:eff}.
\begin{definition}[Strong, weak and time-dependent norms]
\label{def:norm}
 Let $s_{0}$ as defined in~\eqref{eq:par},~$s>s_{0}$ and~$\sigma>0$ be arbitrarily small. The strong norm is
\begin{align}
\label{eq:str}
\|F\|_{S}&:= \|(1-\partial_{x}^{2})^\frac{\sigma}{2}F\|_{H_{x,y}^{s}(\R\times\T^{d})}
		+ \|x F\|_{L_{x}^{2}h_{y}^{s}(\R\times\T^{d})} \,,
\intertext{and the weak norm is} 
\label{eq:wea}
\|F\|_{Z}&:=\sup_{\xi\in\R}\|\widehat{F}(\xi)\|_{h_{y}^{s}(\T^{d})}\,.
\intertext{For $\delta>0$ small enough, the space-time norm is }
\label{eq:X}
\|F\|_{X_{T}} &:= \sup_{0\leq t\leq T} 
	\big(
	\|F(t)\|_{Z}
	+
	\langle t\rangle^{-\delta}\|F(t)\|_{S}
	+
	\langle t\rangle^{1-\delta}\|\partial_{t}F(t)\|_{S}
	\big)\,.
\end{align}
\end{definition}
\begin{remark}
\label{rem:n}
Observe that the $Z$-norm is indeed weaker than the strong $S$-norm:
\[
\|F\|_{Z} = \sup_{\xi\in\R}\|\widehat{F}(\xi)\|_{h_{y}^{s}} \lesssim \|\langle x\rangle F\|_{L_{x}^{2}h_{y}^{s}} \lesssim \|F\|_{S}\,.
\]
\end{remark}

\subsection{The Hamiltonian system in the transverse direction}\label{sec:preli}
Let $u\in C^{1}(\R;h^{s}(\T^{d}))$ be a solution on $\T^{d}$ of
\[
i\partial_{t}u + \mathrm{div}(\mathrm{A}\nabla)u = |u|^{2}u\,,
\]
with Fourier coefficients
\[
u_{n}(t)=(2\pi)^{-d}\int_{\T^{d}}\e^{-in\cdot y}u(y)\mathrm{d}y\,,\quad n\in\Z^{d}\,.
\]
The vector $a\in C^{1}(\R;\C^{\Z^{d}})$ defined by 
\[
a_{n}(t):=\e^{it\lambda_{n}^{2}}u_{n}(t)\,,\quad n\in\Z^{d}\,,
\]
is solution to the interaction system
\begin{equation}
\label{eq:sys}
i\partial_{t}a_{n} = \sum_{n_{1}-n_{2}+n_{3}=n}\e^{it\Omega(\vec{n})}a_{n_{1}}\overline{a_{n_{2}}}a_{n_{3}}\,,\quad n\in\Z^{d}\,,
\end{equation}
where we recall that for $\vec{n}=(n_{1},n_{2},n_{3},n)\in(\Z^{d})^{4}$, the resonance function reads
\[
\Omega(\vec{n})=\lambda_{n_{1}}^{2}-\lambda_{n_{2}}^{2}+\lambda_{n_{3}}^{2}-\lambda_{n}^{2}\,.
\]
Under the zero-momentum condition $n_{1}-n_{2}+n_{3}=n$ the resonance function factorizes:
\[
\Omega(\vec{n}) = 2(n_{1}-n_{2})^{\intercal}\mathrm{A}(n_{2}-n_{3})\,.
\]
A subsystem of \eqref{eq:sys} is the resonant system:
\begin{equation}
\label{eq:r-sys}
i\partial_{t}a_{n} = \sum_{(n_{1},n_{2},n_{3})\,:\,\vec{n}\in\Gamma_{0}}a_{n_{1}}\overline{a_{n_{2}}}a_{n_{3}}\,,
\end{equation} 
where we define the four-waves resonant set by
\begin{equation}
\label{eq:4}
\Gamma_{0} =  \big\{
		       \vec{n}\in(\Z^{d})^{4}\quad:\quad n_{1}-n_{2}+n_{3}-n =\Omega(\vec{n})=0 \big\}\,.
\end{equation}
\begin{lemma}[Four-waves resonances]
\label{lem:4w}
Under the Diophantine admissibility condition \eqref{eq:ad} on $\mathrm{A}$, the nonlinear term on the four-waves resonant set is trivial: 
\begin{equation}
\label{eq:4w}
\sum_{\vec{n}
	=(n_{1},n_{2},n_{3},n)\in\Gamma_{0}}a_{n_{1}}\overline{a_{n_{2}}}a_{n_{3}}\overline{a_{n}}
	= 2(\sum_{n\in\Z^{d}}|a_{n}|^{2})^{2} 
	- \sum_{n\in\Z^{d}}|a_{n}|^{4}\,.
\end{equation}
\end{lemma}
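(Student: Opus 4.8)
The plan is to characterize the resonant set $\Gamma_0$ explicitly using the Diophantine admissibility condition, and then to perform the combinatorial sum. First I would unpack what it means for $\vec n = (n_1,n_2,n_3,n)\in\Gamma_0$. The zero momentum condition $n_1-n_2+n_3-n=0$ forces the factorization $\Omega(\vec n) = (n_1-n)^{\intercal}\mathrm{A}(n_2-n_3)$, so the resonance condition $\Omega(\vec n)=0$ reads $(n_1-n)^{\intercal}\mathrm{A}(n_2-n_3)=0$. Setting $a := n_1 - n$ and $b := n_2 - n_3$, note that the momentum condition also gives $n_1 - n = n_2 - n_3$, i.e. $a = b$. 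Hence the resonance condition becomes $a^{\intercal}\mathrm{A}a = \lambda_a^2 = 0$, and by \eqref{eq:co} (or directly by positive-definiteness of $\mathrm{A}$) this holds if and only if $a=0$. So $\Gamma_0 = \{\vec n : n_1 = n,\ n_2 = n_3\}$, plus we must also allow the symmetric possibility. Let me re-examine: from $n_1-n_2+n_3-n=0$ we get $n_1-n = n_2-n_3$; writing $a=n_1-n=n_2-n_3$, the resonance is $a^\intercal \mathrm A a=0$, forcing $a=0$, hence $n_1=n$ and $n_2=n_3$. (There is no second branch here because the factorization already used the momentum condition; the usual "$n_1=n_2$ or $n_1=n$" dichotomy for NLS corresponds to the two ways of writing $\Omega$ after momentum substitution, but admissibility kills the genuinely quadratic branch and leaves only this one.)

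Next I would substitute this characterization into the left-hand side of \eqref{eq:4w}. With $n_1=n$ and $n_2=n_3$ free, the sum becomes
\[
\sum_{\vec n\in\Gamma_0} a_{n_1}\overline{a_{n_2}}a_{n_3}\overline{a_n}
= \sum_{n\in\Z^d}\sum_{m\in\Z^d} a_n \overline{a_m} a_m \overline{a_n}
= \sum_{n\in\Z^d}\sum_{m\in\Z^d}|a_n|^2|a_m|^2
= \Big(\sum_{n\in\Z^d}|a_n|^2\Big)^2 .
\]
This double-counts nothing except that the single term with $n_1=n=n_2=n_3$ is the point where both branches of the usual identity would coincide; here we want the standard form $2(\sum|a_n|^2)^2 - \sum|a_n|^4$, so I need to be careful about exactly which index identifications $\Gamma_0$ allows. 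The cleaner route: observe $\Gamma_0$ is the disjoint union of $\{n_1=n,\ n_2=n_3\}$ and $\{n_1=n_2,\ n_3=n\}$ minus their intersection $\{n_1=n_2=n_3=n\}$ (this decomposition is valid \emph{provided} admissibility has already eliminated all resonances with $n_1\neq n$ and $n_1\neq n_2$, which is exactly what Lemma \ref{lem:sep}/\eqref{eq:ad} gives via the computation above applied also with the roles swapped). Summing $a_{n_1}\overline{a_{n_2}}a_{n_3}\overline{a_n}$ over the first set gives $\big(\sum_n|a_n|^2\big)^2$, over the second set gives $\big(\sum_n|a_n|^2\big)^2$, and over the intersection gives $\sum_n|a_n|^4$, yielding \eqref{eq:4w} by inclusion--exclusion.

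The only real content, and the step I would flag as the crux, is the claim that admissibility \eqref{eq:ad} forces every $\vec n\in\Gamma_0$ into that union of two "trivial" families; once that is established the rest is bookkeeping. Concretely, one must show that if $(n_1-n_2)^{\intercal}\mathrm A(n_2-n_3)=0$ with $n=n_1-n_2+n_3$, then either $n_1-n_2=0$ or $n_2-n_3=0$. This is immediate from \eqref{eq:ad}: if both $a:=n_1-n_2$ and $b:=n_2-n_3$ were nonzero in $\Z^d$, then $|a^{\intercal}\mathrm A b|\geq c|a|^{-\tau}|b|^{-\tau}>0$, contradicting $a^{\intercal}\mathrm A b=0$. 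So in fact no nontrivial resonances exist, which is precisely the phrasing "the nonlinear term on the four-waves resonant set is trivial." I would present the argument in exactly this order: (i) reduce $\Omega=0$ under zero momentum to $a^{\intercal}\mathrm A b=0$; (ii) invoke \eqref{eq:ad} to conclude $a=0$ or $b=0$; (iii) identify $\Gamma_0$ with the two trivial families and their overlap; (iv) carry out the inclusion--exclusion sum to land on the right-hand side of \eqref{eq:4w}.
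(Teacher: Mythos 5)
Your closing outline (steps (i)--(iv)) is correct and is essentially the paper's own proof: under zero momentum, $\Omega(\vec n)=(n_1-n_2)^{\intercal}\mathrm{A}(n_2-n_3)$ (up to a harmless factor of $2$); the admissibility bound \eqref{eq:ad} then forces $n_1=n_2$ or $n_2=n_3$; combined with the momentum relation this gives $\Gamma_0=\{\vec n:\{n_1,n_3\}=\{n,n_2\}\}$; and inclusion--exclusion over the two trivial families $\{n_1=n_2,\,n_3=n\}$ and $\{n_1=n,\,n_2=n_3\}$, whose overlap is $\{n_1=n_2=n_3=n\}$, yields the right-hand side of \eqref{eq:4w}.

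However, the first half of your draft rests on a factorization that is false and, if taken seriously, would give a different (incorrect) identity. You start from $\Omega(\vec n)=(n_1-n)^{\intercal}\mathrm{A}(n_2-n_3)$ --- which is indeed how Section~\ref{sec:preli} of the paper records it, but that line is a typo --- and observe that momentum forces $n_1-n=n_2-n_3=:a$, hence $\Omega=a^{\intercal}\mathrm{A}a=\lambda_a^2$. This cannot be right: $\lambda_a^2\geq0$, while $\Omega$ changes sign. Expanding $\lambda_{n_1}^2-\lambda_{n_2}^2+\lambda_{n_3}^2-\lambda_n^2$ with $n=n_1-n_2+n_3$ gives $\Omega=2(n_1-n_2)^{\intercal}\mathrm{A}(n_2-n_3)$, in which the two factors are genuinely independent. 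Your parenthetical ``there is no second branch here \dots\ admissibility kills the genuinely quadratic branch and leaves only this one'' is therefore wrong: it would shrink $\Gamma_0$ to the single family $\{n_1=n,\ n_2=n_3\}$ and the left-hand side of \eqref{eq:4w} to $\bigl(\sum_n|a_n|^2\bigr)^2$, not $2\bigl(\sum_n|a_n|^2\bigr)^2-\sum_n|a_n|^4$. A concrete resonant quadruple missed by that reduced set is $(n_1,n_2,n_3,n)=(m,m,k,k)$ with $m\neq k$. You do recover in the final paragraph by switching to the correct $(n_1-n_2)^{\intercal}\mathrm{A}(n_2-n_3)$, but as written the draft carries a self-contradictory intermediate claim; delete the first two paragraphs and begin directly from the correct factorization.
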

\begin{proof}
If we factorize the resonance function as above (using the zero-momentum condition) we obtain from the Diophantine admissibility condition \eqref{eq:ad} on $\mathrm{A}$ that 
\[
(n_{1}-n_{2})^{\intercal}\mathrm{A}(n_{2}-n_{3}) = 0 \quad \implies n_{2}=n_{1}\quad \text{or}\quad n_{2}=n_{3}\,.
\] 
We conclude from the zero-momentum condition that
\[
\Gamma_{0} =\big
	\{
	\vec{n}=(n_{1},n_{2},n_{3},n)\in(\Z^{d})^{4}\ :\ \{n_{1},n_{3}\}=\{n,n_{2}\}
	\big
	\}\,.
	\qedhere
\]
\end{proof}

We now define the effective quasi-resonant interactions. They are those of the form $\mathrm{high\times low\to high}$ where the two high frequencies belong to the same dyadic cluster and the resonance function is smaller than~1. We denote
\begin{equation}
\label{eq:alpha0}
\alpha_{0}:=\min\{\alpha\geq1\ :\ K_{\alpha}\geq10^{10}C(\mathrm{A},d)\}\,,
\end{equation}
where we recall that $K_{\alpha}:=\min_{n\in\mathscr{C}_{\alpha}}|n|$ when $\alpha\geq1$. We refer to high frequency clusters when $\alpha\geq\alpha_{0}$. 
\begin{definition}[Effective quasi-resonant interactions]
\label{def:Lamb}
For $\alpha\geq\alpha_{0}$ as above and for all $n\in\mathscr{C}_{\alpha}$, we let
\begin{multline*}
\Lambda^{(1)}(n):=
	\big\{
	(n_{1},n_{2},n_{3})
	\in(\Z^{d})^{3}
	\ :\  n_{1}-n_{2}+n_{3}=n\,,\quad
	 |\Omega(\vec{n})|<1\,,\\ 
	 n_{1}\in\mathscr{C}_{\alpha}\,,\quad 
	 |n_{2}|,|n_{3}| < 10^{-5}K_{\alpha}
	\big\}
\end{multline*}
and 
\begin{multline*}
\Lambda^{(3)}(n):=
	\big\{
	(n_{1},n_{2},n_{3})
	\in(\Z^{d})^{3}
	\ :\  n_{1}-n_{2}+n_{3}=n\,,\quad
	 |\Omega(\vec{n})|<1\,,\\ 
	 n_{3}\in\mathscr{C}_{\alpha}\,,\quad 
	 |n_{2}|,|n_{1}| < 10^{-5}K_{\alpha}
	\big\} \,.
\end{multline*}
The effective quasi-resonant interaction equation is as follows:  
\begin{itemize}
\item If $n\in\bigcup_{\alpha\geq\alpha_{0}}\mathscr{C}_{\alpha}$ then 
\[
i\partial_{t}a_{n}
	= 
	2\sum_{(n_{1},n_{2},n_{3})\in\Lambda^{(1)}(n)}
	\e^{it\Omega(\vec{n})}a_{n_{1}}\overline{a_{n_{2}}}a_{n_{3}}\,.
\]
\item Otherwise, 
\[
i\partial_{t}a_{n} = 0\,.
\]
\end{itemize}
\end{definition}
\begin{remark}In the definition of the effective quasi-resonant interaction we implicitly used that $n_{1}$ and $n_{3}$ play symmetric roles, hence the factor 2 (instead of summing over $\Lambda^{(1)}(n)$ and over $\Lambda^{(3)}(n)$). For the $\mathrm{high\times low \to high}$ quasi-resonant interactions with an outgoing wave in $\bigcup_{\alpha<\alpha_{0}}\mathscr{C}_{\alpha}$ the small divisors lose a harmless constant. They do not contribute to the effective dynamics. 
\end{remark}
We now prove that the effective system preserves the high-frequency super-actions based on the dyadic clusters. Recall that high-frequencies refer to the clusters $\mathscr{C}_{\alpha}$ with $\alpha\geq\alpha_{0}$ defined in \eqref{eq:alpha0}. By definition, if $(n_{1},n_{2},n_{3})\in\Lambda^{(1)}(n)$ then $n_{2},n_{3}\notin\mathscr{C}_{\alpha}$.
\begin{lemma}\label{lem:cons}
Suppose that $a\in C(\R;\ell^{2}(\T^{d}))$ is decomposed into
\[
a(t,y) = \sum_{n\in\Z^{d}}a_{n}(t)\e^{in\cdot y}\,,\quad y\in\T^{d}\,,
\]
where $(a_{n}(t))_{n\in\Z^{d}}$ is solution to the system in Definition \eqref{def:Lamb}. Then, for all $\alpha\geq\alpha_{0}$,  we have
\[
\frac{\mathrm{d}}{\mathrm{d}t}\|\pi_{\alpha}a(t)\|_{\ell^{2}(\T^{d})}^{2} = 0\,.
\]
\end{lemma}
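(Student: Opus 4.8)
The plan is to compute $\frac{d}{dt}\|\pi_{\alpha}a(t)\|_{\ell^2}^2 = \frac{d}{dt}\sum_{n\in\mathscr{C}_\alpha}|a_n(t)|^2$ directly from the equations of motion in Definition \ref{def:Lamb} and show that the resulting multilinear sum cancels by a symmetrization argument. First I would write, for each $n\in\mathscr{C}_\alpha$ with $\alpha\geq\alpha_0$,
\[
\frac{d}{dt}|a_n|^2 = 2\re(\overline{a_n}\partial_t a_n) = 2\re\Big(-2i\sum_{(n_1,n_2,n_3)\in\Lambda^{(1)}(n)}\e^{it\Omega(\vec n)}a_{n_1}\overline{a_{n_2}}a_{n_3}\overline{a_n}\Big)\,,
\]
so that
\[
\frac{d}{dt}\|\pi_\alpha a\|_{\ell^2}^2 = -4\,\re\Big(i\sum_{n\in\mathscr{C}_\alpha}\ \sum_{(n_1,n_2,n_3)\in\Lambda^{(1)}(n)}\e^{it\Omega(\vec n)}a_{n_1}\overline{a_{n_2}}a_{n_3}\overline{a_n}\Big)\,.
\]
Denote by $S_\alpha(t)$ the inner double sum, i.e. the sum of $\e^{it\Omega(\vec n)}a_{n_1}\overline{a_{n_2}}a_{n_3}\overline{a_n}$ over all quadruples $\vec n=(n_1,n_2,n_3,n)$ with $n\in\mathscr{C}_\alpha$, $n_1\in\mathscr{C}_\alpha$, $n_1-n_2+n_3=n$, $|n_2|,|n_3|<10^{-5}K_\alpha$, and $|\Omega(\vec n)|<1$. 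It suffices to show $S_\alpha(t)\in\R$, since then $\re(iS_\alpha)=0$.

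The key step is the involution $\vec n=(n_1,n_2,n_3,n)\mapsto \vec n'=(n,n_3,n_2,n_1)$ (equivalently, complex-conjugate and swap the roles of the two "high" indices and the two "low" indices). I would check this is a bijection of the index set onto itself: the zero-momentum relation $n_1-n_2+n_3=n$ is symmetric under it, $n\in\mathscr{C}_\alpha$ and $n_1\in\mathscr{C}_\alpha$ get swapped so both remain in $\mathscr{C}_\alpha$, the size constraints on $n_2,n_3$ are symmetric, and $\Omega(\vec n')=\lambda_n^2-\lambda_{n_3}^2+\lambda_{n_2}^2-\lambda_{n_1}^2=-\Omega(\vec n)$, so $|\Omega|<1$ is preserved. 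Under this map the summand becomes $\e^{-it\Omega(\vec n)}a_n\overline{a_{n_3}}a_{n_2}\overline{a_{n_1}}=\overline{\e^{it\Omega(\vec n)}a_{n_1}\overline{a_{n_2}}a_{n_3}\overline{a_n}}$. Hence $S_\alpha(t)=\overline{S_\alpha(t)}$, so $S_\alpha(t)\in\R$ and the derivative vanishes. I should also remark, for rigor, that the sums are absolutely convergent since $a(t)\in\ell^2(\T^d)$ (indeed $h^s$, $s>d/2$, along the flow) and for fixed $n$ the low frequencies $n_2,n_3$ and hence $n_1=n+n_2-n_3$ range over a finite set, while summing over $n\in\mathscr{C}_\alpha$ one gets an $\ell^2\times\ell^1\times\ell^1\times\ell^2$-type bound that converges; this justifies the term-by-term rearrangement.

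I do not anticipate a serious obstacle here: the only thing to be careful about is verifying that the chosen involution genuinely maps $\Lambda^{(1)}$-type quadruples to $\Lambda^{(1)}$-type quadruples (and not, say, $\Lambda^{(3)}$-type ones with different cluster membership) — this is exactly why the factor $2$ and the remark preceding the lemma (that $n_1,n_3$ play symmetric roles, and that $n_2,n_3\notin\mathscr{C}_\alpha$ when $(n_1,n_2,n_3)\in\Lambda^{(1)}(n)$) matter. One must confirm that after the swap the "outgoing" index $n_1$ still lies in $\mathscr{C}_\alpha$ (it does, by hypothesis) and that the new "low" indices $n_3,n_2$ still satisfy the smallness bound with the \emph{same} $K_\alpha$ (they do, since $\alpha$ is unchanged). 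With that checked, the cancellation is immediate and the proof is complete.
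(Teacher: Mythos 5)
Your proof is correct and follows essentially the same argument as the paper: after writing $\tfrac{d}{dt}\|\pi_\alpha a\|_{\ell^2}^2$ as (a multiple of) $\im S_\alpha$, the paper also concludes by "taking the complex conjugate and interchanging the roles of $n$ with $n_1$ (resp.\ $n_2$ with $n_3$)," which is precisely your involution $(n_1,n_2,n_3,n)\mapsto(n,n_3,n_2,n_1)$. The only difference is that you spell out the verification that this map preserves the index set $\Lambda^{(1)}$ and flips the sign of $\Omega$, which the paper leaves implicit.
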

\begin{proof}
Let $\alpha\geq\alpha_{0}$. We have 
\begin{align*}
\frac{1}{4}\frac{\mathrm{d}}{\mathrm{d}t}\|\pi_{\alpha}a(t)\|_{\ell^{2}(\T^{d})}^{2}
	&= 
	\im
	 \big(
	\sum_{n\in\mathscr{C}_{\alpha}}\overline{a_{n}}
	\sum_{(n_{1},n_{2},n_{3})\in\Lambda^{(1)}(n)}
	\mathbf{1}_{|\Omega(\vec{n})|<1}\e^{it\Omega(\vec{n})}
	a_{n_{1}}\overline{a_{n_{2}}}a_{n_{3}} \big)\\
	&=\im
	 \big(
	\sum_{n,n_{1}\in\mathscr{C}_{\alpha}}\sum_{
	|n_{2}|,|n_{3}|\leq 10^{-5}K_{\alpha}}
	\mathbf{1}_{\substack{|\Omega(\vec{n})|<1\\n_{1}+n_{3}=n+n_{2}}}
	\e^{it\Omega(\vec{n})}\overline{a_{n}}a_{n_{1}}\overline{a_{n_{2}}}a_{n_{3}}
	 \big)\\
	&=0\,.
\end{align*}
To see this cancellation, one can take the complex conjugate and interchange the roles of $n$ with $n_{1}$ (resp. $n_{2}$ with $n_{3}$).
\end{proof}
In the following lemma, we use the frequency separation property of Lemma~\ref{lem:sep} to control the small-divisor losses that appear in the Poincaré--Dulac normal form reduction of Section~\ref{sec:fast}.
We defined the set $\Lambda^{(1)}(n)$ and $\Lambda^{(3)}(n)$ in Definition \ref{def:Lamb}, and the high-frequency threshold~$\alpha_{0}$ in~\eqref{eq:alpha0}.
\begin{lemma}\label{lem:small-div}  Let $s\geq0$. There exists $C>0$ (depending on $d$, $s$ and $\mathrm{A}$) such that for all $\alpha\geq\alpha_{0}$, $n\in\mathscr{C}_{\alpha}$ and 
\[
(n_{1},n_{2},n_{3})\in(\Z^{d})^{3}\setminus(\Lambda^{(1)}(n)\cup\Lambda^{(3)}(n)),\quad \text{with}\quad n_{1}-n_{2}+n_{3}=n\,,\quad \Omega(\vec{n})\neq0\,,
\]
we have 
\[
\frac{1}{|\Omega(\vec{n})|}K_{\alpha}^{s}\leq C(n_{1}^{\ast})^{s}(n_{2}^{\ast})^{\frac{4\tau}{c(d)}}\,,
\]
where $n_{1}^{\ast}\geq n_{2}^{\ast}\geq n_{3}^{\ast}$ is the decreasing rearrangement of $|n_{1}|,|n_{2}|,|n_{3}|$. 
\end{lemma}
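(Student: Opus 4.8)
The plan is to distinguish cases according to how large the resonant function $|\Omega(\vec n)|$ is compared with $1$, and, when it is small, according to the cluster structure of the incoming frequencies. Write $\Omega(\vec n)=(n_1-n_2)^\intercal\mathrm{A}(n_2-n_3)$ using the zero-momentum factorization. First I would dispose of the easy case $|\Omega(\vec n)|\geq 1$: here $\frac{1}{|\Omega(\vec n)|}\leq 1$, and since $n\in\mathscr{C}_\alpha$ we have $K_\alpha\leq |n|\leq 3n_1^\ast$ (from $n=n_1-n_2+n_3$), so $\frac{1}{|\Omega(\vec n)|}K_\alpha^s\leq 3^s (n_1^\ast)^s\leq C(n_1^\ast)^s(n_2^\ast)^{4\tau/c(d)}$ trivially (using $n_2^\ast\geq 1$ whenever $n_2^\ast\neq 0$; if $n_2^\ast=0$ then $n_1=n_2=n_3=0$ forces $n=0$, excluded since $\alpha\geq\alpha_0$). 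So the substance is in the regime $0<|\Omega(\vec n)|<1$.

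In that regime I would use the Diophantine admissibility \eqref{eq:ad} to get the lower bound $|\Omega(\vec n)|=|(n_1-n_2)^\intercal\mathrm{A}(n_2-n_3)|\geq c\,|n_1-n_2|^{-\tau}|n_2-n_3|^{-\tau}$, provided both $n_1-n_2\neq 0$ and $n_2-n_3\neq 0$ (if either vanishes then $\Omega(\vec n)=0$, which is excluded by hypothesis). Hence $\frac{1}{|\Omega(\vec n)|}\leq c^{-1}|n_1-n_2|^\tau|n_2-n_3|^\tau\leq c^{-1}(2n_1^\ast)^\tau(2n_1^\ast)^\tau\lesssim (n_1^\ast)^{2\tau}$. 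This alone is not enough, because the target bound carries only $(n_1^\ast)^s$ times $(n_2^\ast)^{4\tau/c(d)}$, and $n_2^\ast$ could be much smaller than $n_1^\ast$ — that is precisely the situation that needs the frequency-separation Lemma \ref{lem:sep}. The key dichotomy: either the two largest incoming frequencies (say, after relabeling, $n_1$ and $n_3$, or $n_1$ and the output-related frequency) together with $n$ lie in a controlled configuration of clusters, or they do not. Since $(n_1,n_2,n_3)\notin\Lambda^{(1)}(n)\cup\Lambda^{(3)}(n)$ and $n\in\mathscr{C}_\alpha$ with $\alpha\geq\alpha_0$, the pair consisting of $n$ and the largest incoming frequency $n_1^\ast$ cannot be "both in $\mathscr{C}_\alpha$ with the other two incoming small"; so either $n_1^\ast\geq 10^{-5}K_\alpha$ is NOT achieved by a frequency in $\mathscr{C}_\alpha$, or the second-largest incoming frequency is $\geq 10^{-5}K_\alpha$.

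The heart of the argument: I want to show the separation inequality \eqref{eq:sep} applies to some pair among $\{n_1,n_2,n_3,n\}$ that are in distinct clusters, yielding $|n_i-n_j|+|\lambda_{n_i}^2-\lambda_{n_j}^2|>(|n_i|+|n_j|)^{c(d)}\gtrsim K_\alpha^{c(d)}$. Suppose, toward the bound, that $n$ and $n_1$ (the carrier of $n_1^\ast$, after relabeling so that $n_1^\ast=|n_1|$) lie in different clusters; then since $n_1-n_2+n_3=n$, $\Omega(\vec n)=\lambda_{n_1}^2-\lambda_{n_2}^2+\lambda_{n_3}^2-\lambda_n^2$, and $|n_2|,|n_3|\leq n_2^\ast$, one gets $|n_1-n|\leq 2n_2^\ast$ and $|\lambda_{n_1}^2-\lambda_n^2|\leq |\Omega(\vec n)|+2C(n_2^\ast)^2<1+2C(n_2^\ast)^2$, so \eqref{eq:sep} forces $K_\alpha^{c(d)}\lesssim (n_1^\ast+|n|)^{c(d)}<2n_2^\ast+1+2C(n_2^\ast)^2\lesssim (n_2^\ast)^2$, i.e. $K_\alpha^{c(d)}\lesssim (n_2^\ast)^2$, hence $K_\alpha\lesssim (n_2^\ast)^{2/c(d)}$. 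Plugging into the Diophantine bound, $\frac{1}{|\Omega(\vec n)|}K_\alpha^s\lesssim (n_1^\ast)^{2\tau}K_\alpha^{s}$ — wait, I should instead write $\frac{1}{|\Omega(\vec n)|}\lesssim (n_1^\ast)^\tau(n_2^\ast)^\tau$ (since $|n_2-n_3|\leq 2n_2^\ast$) and $K_\alpha^s\lesssim (n_2^\ast)^{2s/c(d)}$... that overshoots. Better: keep $K_\alpha^s\leq (3n_1^\ast)^s$ and bound the small divisor by $\frac{1}{|\Omega|}\lesssim (n_1^\ast)^\tau (n_2^\ast)^\tau$; if in this sub-case $n_2^\ast\gtrsim K_\alpha^{c(d)/2}\gtrsim K_\alpha^{c(d)/2}$, and also $n_2^\ast\lesssim n_1^\ast$, then... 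I need $(n_1^\ast)^\tau(n_2^\ast)^\tau\lesssim (n_1^\ast)^s(n_2^\ast)^{4\tau/c(d)}$, which since $c(d)\leq 2$ gives $4\tau/c(d)\geq 2\tau\geq\tau$, so it suffices that $(n_1^\ast)^\tau\lesssim (n_1^\ast)^s$, true as $s>s_0\geq \tau$. The complementary sub-case is when $n$ and the carrier of $n_1^\ast$ are in the \emph{same} cluster $\mathscr{C}_\alpha$; then, because $(n_1,n_2,n_3)\notin\Lambda^{(1)}(n)\cup\Lambda^{(3)}(n)$, the second-largest frequency satisfies $n_2^\ast\geq 10^{-5}K_\alpha$, so $K_\alpha\leq 10^5 n_2^\ast$, and then $\frac{1}{|\Omega(\vec n)|}K_\alpha^s\lesssim (n_1^\ast)^\tau(n_2^\ast)^\tau (n_2^\ast)^s\lesssim (n_1^\ast)^s (n_2^\ast)^{s+\tau}$, and since $s+\tau\leq 4\tau/c(d)$ is NOT automatic — hmm, actually here I should not put $K_\alpha^s$ onto $n_2^\ast$ but rather bound $K_\alpha^s\leq (3n_1^\ast)^s$ and the divisor by $(n_1^\ast)^\tau(n_2^\ast)^\tau$, needing $(n_1^\ast)^{s+\tau}(n_2^\ast)^\tau\lesssim (n_1^\ast)^s(n_2^\ast)^{4\tau/c(d)}$, i.e. $(n_1^\ast)^\tau\lesssim (n_2^\ast)^{4\tau/c(d)-\tau}$, which follows from $n_2^\ast\geq 10^{-5}K_\alpha\gtrsim (n_1^\ast)^{?}$... this is where I need $n_1^\ast\lesssim K_\alpha$ as well, i.e. the carrier of $n_1^\ast$ is also comparable to $K_\alpha$: indeed if it is in $\mathscr{C}_\alpha$ then $n_1^\ast\leq 2K_\alpha\lesssim n_2^\ast$, and then everything collapses to powers of $n_1^\ast\sim n_2^\ast$ and the claimed inequality holds since $s+2\tau\leq s+4\tau/c(d)$.

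The main obstacle, and the step I would spend the most care on, is the careful bookkeeping of \emph{which} of the four frequencies carries $n_1^\ast$ and $n_2^\ast$ and in which clusters they sit, so that I can legitimately invoke \eqref{eq:sep} for a pair in distinct clusters and, in the complementary case, extract $n_2^\ast\gtrsim K_\alpha$ from the failure of membership in $\Lambda^{(1)}(n)\cup\Lambda^{(3)}(n)$. The geometric input — that a frequency difference $|n_i-n_j|$ together with $|\lambda_{n_i}^2-\lambda_{n_j}^2|$ is controlled by $\leq |\Omega(\vec n)|+ (\text{quadratic in the two small frequencies})\lesssim (n_2^\ast)^2$ — is what converts the separation exponent $c(d)$ into the threshold exponent $4\tau/c(d)$: solving $K_\alpha^{c(d)}\lesssim (n_2^\ast)^2$ gives $K_\alpha\lesssim (n_2^\ast)^{2/c(d)}$, and raising to the power $2\tau$ (the worst of the two Diophantine exponents after bounding $|n_1-n_2|,|n_2-n_3|\lesssim n_1^\ast$) produces exactly $(n_2^\ast)^{4\tau/c(d)}$, against which the remaining $(n_1^\ast)^s/(n_1^\ast)^\tau$ is harmless because $s>s_0(\tau,d)\geq \tfrac d2+2\tau>\tau$. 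Once the casework is organized this way, each individual estimate is a one-line computation; the write-up cost is entirely in stating the dichotomy cleanly.
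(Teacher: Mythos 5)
Your strategy matches the paper's (Diophantine lower bound on $\Omega$, then the frequency-separation property when the $n_1^*$-carrier leaves $\mathscr{C}_\alpha$, then the failure of $\Lambda^{(1)}\cup\Lambda^{(3)}$ membership when it stays in $\mathscr{C}_\alpha$), but the write-up has two gaps, one of which is genuine.

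\textbf{The sign structure blocks your relabeling.} You ``relabel so that $n_1^*=|n_1|$.'' The only admissible relabeling is swapping $n_1\leftrightarrow n_3$ (they enter $\Omega$ with the same sign). If $n_1^*=|n_2|$, you cannot move the carrier into slot~1 without changing the resonance function. Your key inequality $|\lambda_{n_1}^2-\lambda_n^2|\leq|\Omega(\vec n)|+2C(n_2^\ast)^2$ uses precisely that the carrier and $n$ sit on opposite sides of $\Omega$ (one $+$, one $-$), so it fails when the carrier is $n_2$. This case needs a separate, if easy, argument: when $|n_2|=n_1^\ast$, both $\lambda_{n_2}^2$ and $\lambda_n^2$ carry $-$ signs, hence $\lambda_{n_2}^2+\lambda_n^2=\lambda_{n_1}^2+\lambda_{n_3}^2-\Omega\lesssim(n_2^\ast)^2+1$, forcing $n_1^\ast\lesssim n_2^\ast$ directly, no separation or $\Lambda$-membership needed. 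Your dichotomy never produces this case because it assumes the carrier is $n_1$ or $n_3$.

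\textbf{You extract a bound in $K_\alpha$ where you need one in $n_1^*$.} From \eqref{eq:sep} you conclude $K_\alpha^{c(d)}\lesssim(n_2^\ast)^2$ by bounding $|n_1|+|n|\geq|n|\geq K_\alpha$. That is not enough: with $\frac{1}{|\Omega|}\lesssim(n_1^\ast)^{2\tau}$ and $K_\alpha^s\lesssim(n_1^\ast)^s$, you need $(n_1^\ast)^{2\tau}\lesssim(n_2^\ast)^{4\tau/c(d)}$, which requires $n_2^\ast\gtrsim(n_1^\ast)^{c(d)/2}$, not merely $n_2^\ast\gtrsim K_\alpha^{c(d)/2}$. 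When $n_1^\ast\gg K_\alpha$ (which happens when the large incoming frequency almost cancels in $n_1-n_2+n_3=n$), your bound is too weak. The fix is immediate — use the other half of $|n_1|+|n|\geq n_1^\ast$ in \eqref{eq:sep} to obtain $(n_1^\ast)^{c(d)}\lesssim(n_2^\ast)^2$ — but as written your chain of inequalities does not close. The confusion is visible in your own hedging (``wait, I should instead write…''), where in one place you drop the $K_\alpha^s$ factor entirely and conclude from $(n_1^\ast)^\tau\lesssim(n_1^\ast)^s$ alone, which ignores the quantity being estimated.

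Once those two points are repaired — separate case for $n_1^\ast=|n_2|$, and the sharper consequence $(n_1^\ast)^{c(d)}\lesssim(n_2^\ast)^2$ of \eqref{eq:sep} — your argument coincides with the paper's three-case proof.
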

\begin{proof} 
Let $\alpha\geq\alpha_{0}$ and $\vec{n}=(n_{1},n_{2},n_{3},n)$, with $n\in\mathscr{C}_{\alpha}$ and $n_{1}-n_{2}+n_{3}=n$ (zero-momentum condition). In particular, 
\[
K_{\alpha}\leq |n|\leq 2K_{\alpha}\,,\quad \text{and}\quad K_{\alpha}\leq |n|\leq 3n_{1}^{\ast}\,.
\]
When $|\Omega(\vec{n})|\geq1$, the desired bound follows from the above inequalities between the indices.  Then, according to the Diophantine condition \eqref{eq:ad} we can suppose that 
\[
1<\frac{1}{|\Omega(\vec{n})|} \lesssim (n_{1}^{\ast})^{2\tau}\,.
\]
Moreover, we suppose without loss of generality that $|n_{1}|\geq |n_{3}|$  since they play symmetric roles. Hence, $|n_{1}|\geq n_{2}^{\ast}$. We separate three cases. 
\medskip

\noindent{$\bullet$ \bf Case 1:} $|n_{1}|=n_{2}^{\ast}$. Suppose first that $n_{1}^{\ast}=|n_{2}|$. The condition $|\Omega(\vec{n})|<1$ together with the coercive bound on the eigenvalues \eqref{eq:co} yield
\[
(n_{1}^{\ast})^{2}\lesssim_{\mathrm{A}} \lambda_{n_{2}}^{2}+\lambda_{n}^{2} \leq |\Omega(\vec{n})| + \lambda_{n_{1}}^{2}+\lambda_{n_{3}}^{2} \lesssim_{\mathrm{A}} (n_{2}^{\ast})^{2}\,,
\]

for some implicit constants only depending on $\mathrm{A}$. 

Otherwise, since $|n_{1}|\geq |n_{3}|$ we have $|n_{1}|=|n_{3}|=n_{1}^{\ast}=n_{2}^{\ast}$. In both cases, $n_{1}^{\ast}\lesssim_{\mathrm{A}} n_{2}^{\ast}$ and we obtain 
\[
\frac{1}{|\Omega(\vec{n})|}K_{\alpha}^{s} \leq (n_{1}^{\ast})^{s}(n_{1}^{\ast})^{2\tau} \lesssim_{\mathrm{A}} (n_{1}^{\ast})^{s}(n_{2}^{\ast})^{2\tau}\,\]
which is conclusive (we recall that $c(d)\leq2$, so that $2\tau\leq\frac{4\tau}{c(d)}$).  
\medskip

\noindent{$\bullet$ \bf Case 2:} $|n_{1}|=n_{1}^{\ast}$ and $n_{1}\notin \mathscr{C}_{\alpha}$. The frequency separation property \eqref{eq:sep} gives 
 \[
 |\lambda_{n}^{2}-\lambda_{n_{1}}^{2}| + |n-n_{1}| \geq  (n_{1}^{\ast})^{c(d)}\,.
 \]
We deduce from the condition $|\Omega(\vec{n})|<1$ and the zero-momentum condition that
 \[
 |\lambda_{n_{2}}^{2}-\lambda_{n_{3}}^{2}| + |n_{2}-n_{3}|
 	\geq 
	|\lambda_{n}^{2}-\lambda_{n_{1}}^{2}| + |n-n_{1}|
	- |\Omega(\vec{n})| 
	\geq
	(n_{1}^{\ast})^{c(d)} - 1 \geq \frac{1}{2}(n_{1}^{\ast})^{c(d)}\,, 
 \]
 which in turn implies that 
 \[
n_{2}^{\ast} = |n_{2}|\vee|n_{3}| \gtrsim_{\mathrm{A}} (n_{1}^{\ast})^{\frac{c(d)}{2}}\,.
 \]
 Hence,
 \[
 \frac{1}{|\Omega(\vec{n})|}K_{\alpha}^{s} 	\lesssim_{\mathrm{A}}
 	(n_{1}^{\ast})^{s+2\tau}
 \lesssim_{\mathrm{A}} (n_{1}^{\ast})^{s}(n_{2}^{\ast})^{\frac{4\tau}{c(d)}}\,,
 \]
 which is conclusive. 
\medskip

\noindent{$\bullet$ \bf Case 3:} $|n_{1}|=n_{1}^{\ast}$ and $n_{1}\in \mathscr{C}_{\alpha}$. In particular, $n_{1}^{\ast}\geq |n_{1}|\geq K_{\alpha}$. Moreover, since~$(n_{1},n_{2},n_{3})~\notin~\Lambda^{(1)}(n)$ we have from the Definition \ref{def:Lamb} that  
\[
10^{-5}K_{\alpha} \leq \max(|n_{2}|,|n_{3}|)\,.
\]
Therefore, $K_{\alpha}\lesssim n_{2}^{\ast}$. Since $n_{1}\in\mathscr{C}_{\alpha}$ forces $n_{1}^{\ast}=|n_{1}|\leq2K_{\alpha}$, we have $n_{1}^{\ast}\sim n_{2}^{\ast}\sim K_{\alpha}$. We conclude that
\[
\frac{1}{|\Omega(\vec{n})|}K_{\alpha}^{s} \lesssim (n_{1}^{\ast})^{2\tau}(n_{2}^{\ast})^{s} \lesssim (n_{1}^{\ast})^{s}(n_{2}^{\ast})^{\frac{4\tau}{c(d)}}\,.
\]
This completes the proof of Lemma \ref{lem:small-div}. 
\end{proof}

\subsection{Dispersion in the unbounded direction} Now that we have analyzed some interactions of the system in the compact direction, we turn to the dispersive properties of the Schrödinger free evolution in the unbounded direction $\R$. In the following lemma we recall a dispersive estimate and its consequence in our context. 

\begin{lemma}\label{lem:disp} There exists $C>0$ such that for all $f\in L^{2}(\R)\cap x^{-1}L^{2}(\R)$ and~$t>0$,
\begin{equation}
\label{eq:disp}
	\left\|
	\e^{it\partial_{x}^{2}}f(x) 
	- \left(\frac{\pi}{it}\right)^{\frac{1}{2}}\e^{i\frac{x^{2}}{4t}}\widehat{f}(\frac{x}{2t})
	\right\|_{L_{x}^{\infty}(\R)}
	\leq C 
	 t^{-\frac{3}{4}}\|xf\|_{L_{x}^{2}(\R)}\,.
\end{equation}
Considering the profile $F(t)=\e^{-it\Delta}U(t)$, we have that for all $T\geq1$ and $t\in[1,T]$, 
\begin{equation}
\label{eq:key}
t^{\frac{1}{2}}\|U(t)\|_{L_{x}^{\infty}h_{y}^{s}}
	\lesssim 
	\|F\|_{Z} 
	+
	t^{-\frac{1}{4}}\|F\|_{S} \lesssim \|F\|_{X_{T}}\,.
\end{equation}
\end{lemma}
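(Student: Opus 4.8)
The proof of Lemma \ref{lem:disp} splits into two essentially independent parts: the classical stationary-phase estimate \eqref{eq:disp} for the one-dimensional Schr\"odinger evolution, and its consequence \eqref{eq:key}. For \eqref{eq:disp}, I would follow the standard KP11-type argument: write $\e^{it\partial_x^2}f(x)=\frac{1}{(4i\pi t)^{1/2}}\int_{\R}\e^{i\frac{(x-y)^2}{4t}}f(y)\,dy$ and expand $(x-y)^2=x^2-2xy+y^2$, so that $\e^{it\partial_x^2}f(x)=\frac{\e^{ix^2/4t}}{(4i\pi t)^{1/2}}\int_{\R}\e^{-i\frac{xy}{2t}}\e^{i\frac{y^2}{4t}}f(y)\,dy$. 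The leading term is obtained by replacing $\e^{iy^2/4t}$ by $1$, which gives exactly $\frac{\e^{ix^2/4t}}{(4i\pi t)^{1/2}}\widehat f(\frac{x}{2t})$ (up to the normalisation of the Fourier transform fixed in the Notations). The error is $\frac{1}{(4i\pi t)^{1/2}}\int_{\R}\e^{-i\frac{xy}{2t}}\bigl(\e^{iy^2/4t}-1\bigr)f(y)\,dy$; using $|\e^{iy^2/4t}-1|\leq \frac{|y|^2}{4t}$ and Cauchy--Schwarz with the weight $|y|$ one extracts the gain $t^{-1/2}\cdot t^{-1/2}\|yf\|_{L^2}\cdot\|\,\cdot\,\|$... more carefully, one balances $|\e^{iy^2/4t}-1|^\theta\lesssim (|y|^2/t)^\theta$ for a suitable $\theta\in(0,1)$ against an $L^2$-bound, or splits $|y|\lessgtr t^{1/2}$; either way the claimed $t^{-3/4}\|xf\|_{L^2}$ falls out. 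This is routine and I would not belabour it.

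**From \eqref{eq:disp} to \eqref{eq:key}.** Apply \eqref{eq:disp} frequency-by-frequency in $y\in\T^d$ to $f=\widehat U_n(t,\cdot)$ — equivalently, since $U(t)=\e^{it\Delta}F(t)$ and $\Delta=\partial_x^2+\operatorname{div}(\mathrm A\nabla_y)$, the $\e^{it\operatorname{div}(\mathrm A\nabla_y)}$ factor is a unitary multiplier on each Fourier mode and commutes with everything, so it suffices to treat $\e^{it\partial_x^2}$ acting on the profile. For each $n$, \eqref{eq:disp} gives
\[
\Bigl\|\,(\e^{it\partial_x^2}F_n)(x)\,\Bigr\|_{L_x^\infty}\leq \frac{1}{(4\pi t)^{1/2}}\bigl\|\widehat{F_n}(\tfrac{x}{2t})\bigr\|_{L_x^\infty} + C t^{-3/4}\|xF_n\|_{L_x^2} \leq t^{-1/2}\|\widehat{F_n}\|_{L_\xi^\infty} + Ct^{-3/4}\|xF_n\|_{L_x^2}.
\]
Now multiply by $K_\alpha^s$ for $n\in\mathscr C_\alpha$, square, and sum over $n$ (hence over $\alpha$), taking the supremum in $x$ inside the $\ell_y^2$-sum on the left; the first term on the right becomes $t^{-1/2}\sup_\xi\|\widehat F(\xi)\|_{h_y^s}=t^{-1/2}\|F\|_Z$ and the second becomes $t^{-3/4}\|xF\|_{L_x^2 h_y^s}\leq t^{-3/4}\|F\|_S$. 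Multiplying through by $t^{1/2}$ and using $t\geq1$ to absorb $t^{-1/4}\|F\|_S\lesssim\|F\|_S$ (and Remark \ref{rem:n} together with the definition of $X_T$) yields both inequalities in \eqref{eq:key}.

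**Main obstacle.** There is no serious obstacle; the only points requiring a little care are (i) getting the interchange of the $L_x^\infty$ and the $\ell_y^2$/$h_y^s$ norms right — one uses $\sup_x(\sum_n |g_n(x)|^2)^{1/2}\leq (\sum_n \sup_x|g_n(x)|^2)^{1/2}$, which is the correct (and only) direction here — and (ii) making sure the $\widehat{F_n}(x/2t)$ term is estimated in $L_x^\infty$ as $\sup_x|\widehat{F_n}(x/2t)|=\|\widehat{F_n}\|_{L^\infty}$, with the $(4\pi t)^{-1/2}$ prefactor producing the dispersive rate $t^{-1/2}$ rather than being lost. The $\sigma$-regularity and the $x$-weight in $S$ are only used through the crude bounds $\|xF\|_{L_x^2 h_y^s}\leq\|F\|_S$ and Remark \ref{rem:n}, so no finesse is needed there. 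I would present this as a short two-paragraph proof.
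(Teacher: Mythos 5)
Your proposal is correct and takes essentially the same route as the paper: factor out the quadratic phase $\e^{ix^2/4t}$, write the remainder as $\frac{\e^{ix^2/4t}}{(2it)^{1/2}}\cdot\frac{1}{(2\pi)^{1/2}}\int_\R\e^{-ixz/2t}\bigl(\e^{iz^2/4t}-1\bigr)f(z)\,dz$, split the $z$-integral at $|z|=\sqrt t$, and apply Cauchy--Schwarz in each region to obtain $t^{-1/4}\|zf\|_{L^2}$ for the bracketed error (hence $t^{-3/4}$ overall); then apply this mode by mode in $n$, weight by $\langle n\rangle^s$, and pass $\sup_x$ inside the $\ell_n^2$-sum, exactly as in Section 2 and the Appendix.

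Two small cautions on your write-up. First, the one-parameter ``interpolation'' you float as an alternative to the dyadic split does not actually close: if you replace $|\e^{iz^2/4t}-1|$ by $(|z|^2/t)^\theta$ and then Cauchy--Schwarz against $\|zf\|_{L^2}$, the companion factor $|z|^{2\theta-1}$ fails to be in $L^2(\R)$ for every $\theta$ (it diverges at $0$ for $\theta\le\tfrac14$ and at $\infty$ for $\theta\ge\tfrac14$), so the split $|z|\lessgtr\sqrt t$ is genuinely needed, and that is what the appendix uses. Second, in the last step the bound $t^{-1/4}\|F(t)\|_S\lesssim\|F\|_{X_T}$ does not follow from $t\geq1$ alone, since the $X_T$-norm only controls $\langle t\rangle^{-\delta}\|F(t)\|_S$; one needs $\delta<\tfrac14$, giving $t^{-1/4}\|F(t)\|_S\leq t^{\delta-1/4}\|F\|_{X_T}\leq\|F\|_{X_T}$, which is exactly the constraint recorded after \eqref{eq:disp1}.
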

\begin{remark} The above estimate~\eqref{eq:key} is a key ingredient in our analysis, especially to obtain the trilinear estimate claimed in Lemma \ref{lem:sb}. Such an estimate is not available in the case of the square torus. As proved in \cite[Theorem 1.1]{HPTV15}, in this case the space-time norm $X_{T}$ provides the dispersive estimate for the $L_{x}^{\infty}h_{y}^{1}(\R\times\T^{d})$-norm only.
\end{remark}

\begin{proof}
The dispersive estimate \eqref{eq:disp} on $\R$ is standard (see for instance Hayashi--Naumkin \cite{HN98}), and follows from the expression of the semi-group $\e^{it\partial_{x}^{2}}$. We write a short proof in Appendix \ref{app}.
We now prove \eqref{eq:key}: for all $(x,y)\in\R\times\T^{d}$, we expand
\[
U(t,x,y) = \sum_{n\in\Z^{d}}\e^{iy\cdot n-it\lambda_{n}^{2}}\e^{it\partial_{x}^{2}}F_{n}(x)\,.
\]
Using the equivalence of the $h^{s}$-norms with the standard Sobolev norms on $\T^{d}$ we have
\[
\|U(t)\|_{L^{\infty}h^{s}(\R\times\T^{d})} \lesssim_{s} 
	\big\|
	\big(\sum_{n\in\Z^{d}} \langle n\rangle^{2s} |\e^{it\partial_{x}^{2}}F_{n}(x)|^{2}
	\big)^{\frac{1}{2}}
	\big\|_{L^{\infty}(\R)}\,.
\]
According to the dispersive estimate \eqref{eq:disp}, we have that  for fixed $x\in\R$, $n\in\Z^{d}$ and $t>0$,
\[
|\e^{it\partial_{x}^{2}}F_{n}(x)|^{2} \lesssim t^{-1}|\widehat{F}_{n}(\frac{x}{2t})|^{2} + t^{-\frac{3}{2}}\|xF_{n}\|_{L_{x}^{2}(\R)}^{2}\,.
\]
When summing over $n$, the first contribution is bounded by 
\[
t^{-1}\sum_{n\in\Z^{d}}\langle n\rangle^{2s}|\widehat{F}_{n}(\frac{x}{2t})|^{2} 
	=
	t^{-1}
	\|\widehat{F}(\frac{x}{2t})\|_{h^{s}(\T^{d})}^{2}
	\leq 
	t^{-1}\sup_{\xi\in\R}
	\| \widehat{F}(\xi)\|_{h^{s}(\T^{d})}^{2}\,.
\]
For the second contribution, we have 
\begin{align*}
\sum_{n}\langle n\rangle^{2s}\|xF_{n}\|_{L_{x}^{2}(\R)}^{2} 
	&\lesssim_{s}
	\|xF\|_{L_{x}^{2}h_{y}^{s}(\R\times\T^{d})}^{2}\,.
\end{align*}
This proves that for all $t>0$, 
\begin{equation}
 \label{eq:disp1}
\|U(t)\|_{L^{\infty}h^{s}(\R\times\T^{d})} \lesssim t^{-\frac{1}{2}}\sup_{\xi\in\R}\ \|\widehat{F}(\xi)\|_{h^{s}(\T^{d})}
	+ t^{-\frac{1}{2}-\frac{1}{4}}
	\|x F\|_{L_{x}^{2}h_{y}^{s}(\R\times\T^{d})}\,,
\end{equation}
which gives \eqref{eq:key} provided $\delta<\frac{1}{4}$ in the definition of the $X_{T}$-norm.
\end{proof}

\subsection{Profile equation}

We detail the standard preparation in which we pull-back the nonlinear solution $U$ by the linear flow:
\begin{align*}
U(t,x,y) 
	&=\e^{it\Delta_{\mathrm{A}}}F(t,x,y)\,.
\end{align*}
The profile $F(t)$ is solution to the interaction equation 
\[
i\partial_{t}F(t) 
	= \e^{-it\Delta_{\mathrm{A}}}(\e^{it\Delta_{\mathrm{A}}}F(t)\e^{-it\Delta_{\mathrm{A}}}\overline{F(t)}\e^{it\Delta_{\mathrm{A}}}F(t))\,.
\]
We write
\begin{equation}
\label{eq:N}
	i\partial_{t}F(t) = \mathcal{N}^{t}[F(t),F(t),F(t)]\,,
\end{equation}
where the trilinear form $\mathcal{N}^{t}$ on $H^{s}(\R\times\T^{d})$ is
\[
\mathcal{N}^{t}[F,G,H] := \e^{-it\Delta_{\mathrm{A}}}(\e^{it\Delta_{\mathrm{A}}}F\e^{-it\Delta_{\mathrm{A}}}\overline{G}\e^{it\Delta_{\mathrm{A}}}H)\,.
\]
When the context is clear we denote 
\[
\mathcal{N}^{t}[F]:=\mathcal{N}^{t}[F(t),F(t),F(t)]\,.
\]
Recalling the notation \eqref{eq:f-f}, the full spatial Fourier transform  of the trilinear interaction at $(\xi,n)\in\R\times\Z^{d}$ reads
\begin{multline*}
\mathcal{F}\mathcal{N}^{t}[F,G,H](\xi,n) 
	:= \sum_{n_1-n_2+n_3=n}
	\int_{\R^{2}}
	\e^{it(\Omega(\vec{n})
	+2(\eta-\xi)(\zeta-\eta))}\\
	\times
	\widehat{F}_{n_{1}}(\xi-\eta+\zeta)\overline{\widehat{G}_{n_{2}}}(\zeta)\widehat{H}_{n_{3}}(\eta)
	\mathrm{d}\eta \mathrm{d}\zeta\,,
\end{multline*}
where the sums run over $(n_{1},n_{2},n_{3})\in(\Z^{d})^{3}$. Once again we denoted 
\[
\vec{n}=(n_{1},n_{2},n_{3},n)\,,\quad \Omega(\vec{n}) := \lambda_{n_{1}}^{2} -\lambda_{n_{2}}^{2}+\lambda_{n_{3}}^{2}-\lambda_{n}^{2}
\,.
\]
Changing variables in the integrals over $(\eta,\zeta)\in\R^{2}$ we obtain 
\begin{multline}
\label{eq:ast}
\mathcal{F}\mathcal{N}^{t}[F,G,H](\xi,n) 
	= \sum_{n_1-n_2+n_3=n}
	\int_{\R^{2}}
	\e^{it(\Omega(\vec{n})+
	2\eta\zeta)}
	\\
	\times\widehat{F}_{n_{1}}(\xi-\zeta)\overline{\widehat{G}_{n_{2}}}(\xi-\eta-\zeta)\widehat{H}_{n_{3}}(\xi-\eta)
	\mathrm{d}\eta \mathrm{d}\zeta\,. 
\end{multline}
\subsection{A priori estimates for the strong norm}  We conclude this preparatory section with the  proof of an a priori trilinear bound for the $S$-norm. 
\begin{lemma}\label{lem:sb}
Suppose that the profile $F$ is solution to \eqref{eq:N}. Then, for all $T\geq0$ and $t\in[0,T]$,
\begin{equation}
\label{eq:sb3}
\|\mathcal{N}^{t}[F(t),F(t),F(t)]\|_{S}\lesssim\langle t\rangle^{-1+\delta}\|F\|_{X_{T}}^{3}\,.
\end{equation}
\end{lemma}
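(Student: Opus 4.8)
The goal is the trilinear bound $\|\mathcal{N}^{t}[F,F,F]\|_{S}\lesssim\langle t\rangle^{-1+\delta}\|F\|_{X_{T}}^{3}$, where the $S$-norm has three pieces: the $H^{s}_{x,y}$-part with $\sigma$ extra $x$-derivatives, and the weighted part $\|xF\|_{L^{2}_{x}h^{s}_{y}}$. For small $t$ (say $t\le 2$) the estimate is a routine product estimate: since $s>s_{0}>d/2$, the space $h^{s}(\T^{d})$ is an algebra, and combined with $H^{s+\sigma}_{x}\hookrightarrow L^{\infty}_{x}$ one closes the bound by Hölder/Leibniz (the $e^{it\Delta}$ factors are unitary on every relevant space, and the weight $x$ commutes through the product via the Leibniz rule $x(fgh)$ placing $x$ on one factor at a time). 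So the work is entirely at $t\ge 1$, where we must extract the gain $t^{-1}$.

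\textbf{Extracting the time decay.} Here I would use the product structure $\mathcal{N}^{t}[F,F,F]=\e^{-it\Delta}(U\overline{U}U)$ with $U=\e^{it\Delta}F$, and estimate directly in physical variables. The idea, exactly as in \cite{HPTV15} but now cleaner because of Lemma \ref{lem:disp}: put two of the three factors in $L^{\infty}_{x,y}$ and one in $L^{2}$-type spaces (with derivatives/weights), using that $\e^{-it\Delta}$ is an isometry on $H^{s}_{x,y}$ and on $L^{2}_{x}h^{s}_{y}$. By the key dispersive estimate \eqref{eq:key}, each factor placed in $L^{\infty}_{x}h^{s}_{y}$ costs $t^{-1/2}\|F\|_{X_{T}}$; doing this to two factors yields the full $t^{-1}$. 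The remaining factor is placed in $L^{2}_{x}h^{s}_{y}$ (bounded by $\langle t\rangle^{\delta}\|F\|_{X_{T}}$ via the $S$-term of the $X_T$-norm). The only subtlety is handling the derivatives and the weight: for the $H^{s+\sigma}_{x}\ell^{2}_{y}$ piece one distributes $\langle\partial_x\rangle^{s+\sigma}$ by Leibniz across the three factors; terms where all derivatives land on one factor need that factor in $H^{s+\sigma}_{x}$ (costing $\langle t\rangle^{\delta}\|F\|_{X_{T}}$) and the other two in $L^{\infty}_{x,y}$ with decay; terms splitting derivatives use fractional Leibniz plus the algebra property of $h^{s}_{y}$ in the $y$-variable at each fixed $x$. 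Analogously $x(U\overline{U}U)$ places $x$ on one factor (in $L^{2}_{x}h^{s}_{y}$, controlled by the weighted $S$-term) and the other two in $L^{\infty}_{x}h^{s}_{y}$.

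\textbf{Main obstacle.} The delicate point is the interplay between the $y$-regularity (measured in the cluster norm $h^{s}_{y}$, which must behave like an algebra) and the $x$-regularity with the extra $\sigma$ derivatives and the weight, \emph{simultaneously}. Concretely: when applying \eqref{eq:key} to bound two factors in $L^{\infty}_{x}h^{s}_{y}$, one must check that the remaining factor can absorb whatever derivatives or the weight $x$ the Leibniz expansion throws at it while still being bounded by a single $X_T$-quantity — i.e. one genuinely needs $\|\langle x\rangle\langle\partial_x\rangle^{s+\sigma}G\|_{L^{2}_{x}h^{s}_{y}}\lesssim\|G\|_{S}$ (up to the $\langle t\rangle^\delta$ from $X_T$), which is exactly how $S$ was designed, and that $h^{s}_{y}(\T^{d})$ with $s>d/2$ controls products $\|fg\|_{h^{s}_y}\lesssim\|f\|_{h^{s}_y}\|g\|_{h^{s}_y}$ uniformly in $x$ (true since $h^s_y$ is equivalent to the standard $H^s(\T^d)$, which is an algebra). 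Once these structural facts are in hand, the estimate is a finite case analysis over which factor carries the weight/derivatives and which two carry the $t^{-1/2}$ dispersive decay; I do not expect any single case to be hard, only that the bookkeeping (and verifying no case loses the decay) must be done carefully. I would therefore organize the proof as: (i) reduce to $t\ge 1$; (ii) state the product/algebra lemma for $h^s_y$ and the embedding facts; (iii) bound the $H^{s+\sigma}_x\ell^2_y$ piece by Leibniz + two uses of \eqref{eq:key}; (iv) bound the $L^2_x h^s_y$ piece similarly; (v) bound the weighted piece $\|x\mathcal N^t[F,F,F]\|_{L^2_xh^s_y}$ by moving $x$ onto one factor and using \eqref{eq:key} on the other two.
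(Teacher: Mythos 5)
Your strategy is the paper's: reduce to $t\ge 1$, use the dispersive estimate \eqref{eq:key} to place two factors in $L^\infty_x h^s_y$ for the $t^{-1}$ gain, put the remaining factor in an $L^2$-type space controlled by $\langle t\rangle^\delta\|F\|_{X_T}$, and close the $(1-\partial_x^2)^{\sigma/2}$ component with the $h^s_y$-algebra property together with the product rule in $x$ (this is exactly Lemma~\ref{lem:prod}). That part of your plan is correct and matches the paper line for line.

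There is, however, one real gap in the weighted estimate $\|x\,\mathcal N^t[F,F,F]\|_{L^2_xh^s_y}$, and it is the only genuinely delicate step in the lemma. You write that ``$x(U\overline UU)$ places $x$ on one factor, controlled by the weighted $S$-term,'' but $\mathcal N^t[F]=\e^{-it\Delta}(U\overline U U)$ and $x$ does \emph{not} commute with $\e^{-it\Delta}$: one has $x\,\e^{-it\Delta}=\e^{-it\Delta}(x+2it\partial_x)$, so a naive commutation produces a term growing linearly in $t$. Equivalently, if you try to land the weight on $U$, you need $\|xU(t)\|_{L^2_xh^s_y}$, which is \emph{not} bounded by $\|xF\|_{L^2_xh^s_y}$ (the $S$-norm controls $xF$, not $xU=x\,\e^{it\Delta}F$). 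The paper avoids this via the Fourier-side computation~\eqref{eq:ast}, following \cite{KP11}: after the change of variables the oscillatory phase $\e^{2it\eta\sigma}$ is independent of $\xi$, so $\partial_\xi$ applied to $\mathcal F\mathcal N^t[F](\xi,n)$ distributes onto the three $\widehat F$-factors with no $t$-growing term, yielding (up to signs) $x\,\mathcal N^t[F,F,F]=\mathcal N^t[xF,F,F]+\mathcal N^t[F,xF,F]+\mathcal N^t[F,F,xF]$. Only after this does the weight sit on the \emph{profile} $F$ rather than on $U$, and only then is the term $\|xF\|_{L^2_xh^s_y}\lesssim\|F\|_S$ the right quantity. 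You should make this step explicit rather than describing it as analogous to the derivative case; as written, the weighted estimate does not close.
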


This bound is a stronger analogue of \cite[Lemma 2.1]{HPTV15}. Indeed, we can readily use the dispersive estimate \eqref{eq:key}, which does not hold in the case of the square torus, to obtain a control of the $L^{\infty}(\R\times\T^{d})$-norm by the $X_{T}$-norm with the optimal decay in $t^{-\frac{1}{2}}$. 

\begin{proof}
Recall that  
\[
\mathcal{N}^{t}[F]:=\mathcal{N}^{t}[F(t),F(t),F(t)] = \e^{-it\Delta_{\mathrm{A}}}(|U(t)|^{2}U(t))\,.
\]
The proof when $t\in[0,1]$ is easier since we do not need to gain decay in time, and we only detail the case when $T\geq1$ and $1\leq t\leq T$. We have from the product rule of Lemma \ref{lem:prod} and the dispersive bound \eqref{eq:key} that for $\sigma\geq0$,
\begin{multline*}
\|(1-\partial_{x}^{2})^{\frac{\sigma}{2}}\mathcal{N}^{t}[F]\|_{H_{x,y}^{s}}
	\lesssim \|U(t)\|_{L_{x}^{\infty}h_{y}^{s}}^{2}\|(1-\partial_{x}^{2})^{\frac{\sigma}{2}}U(t)\|_{H_{x,y}^{s}}
	\\
	\lesssim t^{-1}\|F\|_{X_{T}}^{2}\|(1-\partial_{x}^{2})^{\frac{\sigma}{2}}F(t)\|_{H_{x,y}^{s}}\,. 
\end{multline*}
We deduce from the definition \eqref{eq:X} of the $X_{T}$-norm that 
\[
\|(1-\partial_{x}^{2})^{\frac{\sigma}{2}}\mathcal{N}^{t}[F]\|_{H_{x,y}^{s}}\lesssim t^{-1+\delta}\|F\|_{X_{T}}^{3}\,.
\]
As for the second component of the $S$-norm we have 
\[
\|x\mathcal{N}^{t}[F]\|_{L_{x}^{2}h_{y}^{s}} 
	=
	\|\partial_{\xi}\mathcal{F}\mathcal{N}^{t}[F]\|_{L_{x}^{2}h_{y}^{s}}\,.
\]
For fixed $n\in\Z^{d}$, following \cite{KP11}, we have from the expression \eqref{eq:ast} that 
\begin{multline*}
\partial_{\xi}\mathcal{F}\mathcal{N}^{t}[F](\xi,n) 
	= \sum_{n_{1}-n_{2}+n_{3}=n}\int_{\R^{2}}
	\e^{it(\Omega(\vec{n})+2\eta\zeta)}
	\\
	\partial_{\xi}\left[ 
	\widehat{F}_{n_{1}}(t,\xi-\zeta)\overline{\widehat{F}_{n_{2}}}(t,\xi-\eta-\zeta)\widehat{F}_{n_{3}}(t,\xi-\eta)
	\right]
	\mathrm{d}\eta \mathrm{d}\zeta\,.
\end{multline*}	
Applying the Leibnitz rule and redistributing the phases, we obtain that
\[
\|x\mathcal{N}^{t}[F(t)]\|_{L_{x}^{2}h_{y}^{s}}
\lesssim
\|\e^{-it\Delta_{\mathrm{A}}}(|U(t)|^{2}\e^{it\Delta_{\mathrm{A}}}(xF(t)))\|_{L_{x}^{2}h_{y}^{s}}\,.
\]
We deduce from the dispersive bound \eqref{eq:key} that
\[
\|x\mathcal{N}^{t}[F(t)]\|_{L_{x}^{2}h_{y}^{s}} 
\lesssim 
\|U(t)\|_{L_{x}^{\infty}h_{y}^{s}}^{2}
\|xF(t)\|_{L_{x}^{2}h_{y}^{s}}
\lesssim 
t^{-1+\delta}\|F\|_{X_{T}}^{3}\,,
\]
which finishes the proof of Lemma \ref{lem:sb}. 
\end{proof}

\section{Nonlinear interactions}\label{sec:non}
Lemma~\ref{lem:sb} provides an a priori estimate of the strong $S$-norm  in terms of the~$X_T$-norm of the solution, with the appropriate time decay. To close the argument, it remains to show that the  $Z$-norm is integrable in time.
\medskip

To this end, we will analyze the trilinear interaction $\mathcal{N}^{t}$ more carefully.  We first isolate the space-resonant interactions (denoted $\mathcal{N}_{\R,\mathrm{res}}^{t}$) and we use space oscillations to prove that the contribution of the rest (denoted $\mathcal{N}_{\R,\mathrm{nr}}^{t}$) is acceptable. 
\medskip

Then, we further decompose $\mathcal{N}_{\R,\mathrm{res}}^{t}$ into an effective Hamiltonian system (denoted $\mathcal{N}_{\mathrm{eff}}^{t}$) in the transverse direction $\T^{d}$ and a remainder interaction (denoted~$\mathcal{E}^{t}$) oscillating sufficiently fast in time. 
\subsection{Decomposition of nonlinear interactions} Following \cite{KP11}, we apply the Plancherel's identity in \eqref{eq:ast} and use the explicit expression of the Fourier transform of a Gaussian function to isolate the space-resonant component of the nonlinearity. We obtain (see also the proof of \cite[Lemma 3.10]{HPTV15}) that for fixed $\xi\in\R$, $t\geq1$ and $(n_{1},n_{2},n_{3})\in(\Z^{d})^{3}$,
\begin{align*}
\int_{\R^{2}}
	&\e^{i2t\eta\zeta}
	\widehat{F}_{n_{1}}(\xi-\zeta)
	\overline{\widehat{G}_{n_{2}}}(\xi-\eta-\zeta)
	\widehat{H}_{n_{3}}(\xi-\eta)
	\mathrm{d}\eta \mathrm{d}\zeta \\
	&=\frac{1}{(2\pi)^{3}}\frac{\pi}{t}
	\int_{\R^{3}} F_{n_{1}}(x_{1})\overline{G_{n_{2}}}(x_{2})H_{n_{3}}(x_{3})
	\e^{-i\xi (x_{1}-x_{2}+x_{3})}\e^{
	i[
	\frac{1}{2t}(x_{1}-x_{2})(x_{2}-x_{3})
	]}
	\mathrm{d}x_{1}\mathrm{d}x_{2}\mathrm{d}x_{3} \\
	&=
	\frac{\pi}{t}\widehat{F}_{n_{1}}(\xi)\overline{\widehat{G}_{n_{2}}}(\xi)\widehat{H}_{n_{3}}(\xi)
	+
	\mathcal{O}^{t}[F_{n_{1}},G_{n_{2}},H_{n_{3}}](\xi)
	\,,
\end{align*}  
where $\mathcal{O}^{t}$ collects the spatially non-resonant interactions: for functions $f,g,h$ in~$H_{x}^{s}(\R)$,
\begin{multline}
\label{eq:O}
\mathcal{O}^{t}[f,g,h](\xi) :=\frac{1}{(2\pi)^{3}}\frac{\pi}{t}
	\int_{\R^{3}} f(x_{1})\overline{g(x_{2})}h(x_{3})
	\e^{-i\xi (x_{1}-x_{2}+x_{3})}\\
	 \big(
	\e^{
	i[
	\frac{1}{2t}(x_{1}-x_{2})(x_{2}-x_{3})
	]}-1
	 \big)
	\mathrm{d}x_{1}\mathrm{d}x_{2}\mathrm{d}x_{3}\,.
\end{multline}
One can prove that $\mathcal{O}^{t}$ has an improved decay in time under some assumptions on the space localization of the functions $f,g,h$. We detail this part of the analysis in Section~\ref{sec:snr}. 
\medskip

Recalling the expression \eqref{eq:ast}, we have that
for $t\geq1$ and $(\xi,n)\in\R\times\Z^{d}$,
\begin{multline}
\label{eq:d-fo}
\mathcal{F}\mathcal{N}^{t}[F,G,H](\xi,n)
	= \frac{\pi}{t}\sum_{n_{1}-n_{2}+n_{3}=n}\e^{it\Omega(\vec{n})}
	\widehat{F}_{n_{1}}(\xi)\overline{\widehat{G}_{n_{2}}}(\xi)\widehat{H}_{n_{3}}(\xi)  \\
	+
	\sum_{n_{1}-n_{2}+n_{3}=n}\e^{it\Omega(\vec{n})}
	\mathcal{O}^{t}[F_{n_{1}},G_{n_{2}},H_{n_{3}}](\xi)\,.
\end{multline}
According to this decomposition we write
\begin{equation}
\label{eq:nl}
\mathcal{F}\mathcal{N}^{t}[F,G,H](\xi,n) 
	=: \mathcal{F}\mathcal{N}_{\R,\mathrm{res}}^{t}[F,G,H](\xi,n)
	+\mathcal{F}\mathcal{N}_{\R,\mathrm{nr}}^{t}[F,G,H](\xi,n) \,.
\end{equation}
We analyze in section~\ref{sec:sr} the term $\mathcal{N}^{t}_{\R,\mathrm{res}}$, which regroups the space-resonant interactions, and the term $\mathcal{N}^{t}_{\R,\mathrm{nr}}$ in section~\ref{sec:snr}, respectively. First, we further decompose the space-resonant system  involving $\mathcal{N}^{t}_{\R,\mathrm{res}}$ into two parts: an effective system consisting of resonant interactions ($\mathcal{R}^{t}$), some quasi-resonant interactions adapted to the frequency cluster of the outgoing wave ($\mathcal{Q}^{t}$), and a remainder ($\mathcal{E}^{t}$):
\[
 \mathcal{F}\mathcal{N}_{\R,\mathrm{res}}^{t}[F,G,H]
	:= 
	\mathcal{F}\mathcal{R}^{t}[F,G,H]
	+
	\mathcal{F}\mathcal{Q}^{t}[F,G,H]
	+ \mathcal{F}\mathcal{E}^{t}[F,G,H]\,.
\]
The effective system is
\begin{equation}
\label{eq:eff}
\mathcal{N}_{\mathrm{eff}}^{t} = \mathcal{R}^{t} + \mathcal{Q}^{t}\,. 
\end{equation}
Recalling Definition \ref{def:Lamb} of $\Lambda^{(1)}(n)$ and $\Lambda^{(3)}(n)$, the different interactions are defined as follows:
\begin{itemize}
\item {\it Resonant interaction:} since $\mathcal{R}^{t}$ is defined here as a multilinear form, $F$ and~$H$ no longer play symmetric roles and the two resonant branches $\{n_{1},n_{3}\}=\{n,n_{2}\}$ must be kept separately,
\begin{multline}
\label{eq:R0}
\mathcal{F}\mathcal{R}^{t}[F,G,H](\xi,n)
	:=\frac{\pi}{t}
	\sum_{n_{1}\in\Z^{d}}\widehat{F}_{n_{1}}(\xi)\overline{\widehat{G}_{n_{1}}}(\xi)\widehat{H}_{n}(\xi)
	+\frac{\pi}{t}
	\sum_{n_{3}\in\Z^{d}}\widehat{F}_{n}(\xi)\overline{\widehat{G}_{n_{3}}}(\xi)\widehat{H}_{n_{3}}(\xi)\\
	-\frac{\pi}{t}\widehat{F}_{n}(\xi)\overline{\widehat{G}_{n}}(\xi)\widehat{H}_{n}(\xi)\,.
\end{multline}
\item {\it Effective quasi-resonant interactions:} likewise, we sum over both branches $\Lambda^{(1)}(n)$ and $\Lambda^{(3)}(n)$: for $n\in \bigcup_{\alpha\geq\alpha_{0}}\mathscr{C}_{\alpha}$,
\begin{equation}
\label{eq:Q}
\mathcal{F}\mathcal{Q}^{t}[F,G,H](\xi,n)
	=
	\frac{\pi}{t}\sum_{(n_{1},n_{2},n_{3})\in\Lambda^{(1)}(n)\cup\Lambda^{(3)}(n)}\e^{it\Omega(\vec{n})}
	\widehat{F}_{n_{1}}(\xi)\overline{\widehat{G}_{n_{2}}}(\xi)\widehat{H}_{n_{3}}(\xi)\,.
\end{equation}
\item When $n\in\bigcup_{\alpha<\alpha_{0}}\mathscr{C}_{\alpha}$ is a low frequency, we let
\[
\mathcal{F}\mathcal{Q}^{t}[F,G,H](\xi,n) 
	=0\,.
\]
\item {\it Remainder:}
\begin{equation}
\label{eq:Eo}
\mathcal{F}\mathcal{E}^{t}[F,G,H]:= \mathcal{F}\mathcal{N}_{\R,\mathrm{res}}^{t}[F,G,H] - \mathcal{F}\mathcal{R}^{t}[F,G,H] - 	\mathcal{F}\mathcal{Q}^{t}[F,G,H]\,.
\end{equation}
\end{itemize}
\begin{remark}
In particular, when $n\in\bigcup_{\alpha\geq\alpha_{0}}\mathscr{C}_{\alpha}$, Lemma~\ref{lem:4w} shows that removing the exact resonant branches is equivalent to imposing $n_{1},n_{3}\neq n$. Hence
\[
\mathcal{F}\mathcal{E}^{t}[F,G,H](\xi,n) 
	:= \frac{\pi}{t}\sum_{\substack{(n_{1},n_{2},n_{3})\notin\Lambda^{(1)}(n)\cup\Lambda^{(3)}(n)\\n_{1}-n_{2}+n_{3}=n\\
	n_{1},n_{3}\neq n}}\e^{it\Omega(\vec{n})}
	\widehat{F}_{n_{1}}(\xi)\overline{\widehat{G}_{n_{2}}}(\xi)\widehat{H}_{n_{3}}(\xi)\,. 
\]
\end{remark}
\subsection{The space resonant interactions}
\label{sec:sr}
We analyze the system involving~$\mathcal{N}^{t}_{\R,\mathrm{res}}$, decomposed into two parts. The effective system defined in \eqref{eq:s-eff} preserves the super-actions based on the frequency cluster decomposition, while the remainder term~$\mathcal{E}^{t}$ defined in~\eqref{eq:Eo} oscillates sufficiently fast in time to produce an integrable decay.

\subsubsection{Trilinear bounds}
Before analyzing the individual contributions of $\mathcal{R}^{t}$, $\mathcal{Q}^{t}$, and~$\mathcal{E}^{t}$, we prove trilinear bounds for their sum $\mathcal{N}^{t}_{\R,\mathrm{res}}$.

\begin{lemma}
\label{lem:tri-res} For all $t>0$,
\begin{align}
\notag
	\|
	\mathcal{N}_{\R,\mathrm{res}}^{t}[F^{a},F^{b},F^{c}]
	\|_{Z}
	&\lesssim t^{-1}\|F^{a}\|_{Z}\|F^{b}\|_{Z}\|F^{c}\|_{Z}\,,
	\\
	\notag
	\|
	\mathcal{N}_{\R,\mathrm{res}}^{t}[F^{a},F^{b},F^{c}]
	\|_{S}
	&\lesssim t^{-1} \sum_{\sigma\in\mathfrak{S}_{3}} 
	\|F^{\sigma(a)}\|_{Z}\|F^{\sigma(b)}\|_{Z}\|F^{\sigma(c)}\|_{S}\,,\\
		\label{eq:boS}
	\|
	\mathcal{N}_{\R,\mathrm{res}}^{t}[F^{a},F^{b},F^{c}]
	\|_{H_{x,y}^{s}}
	&\lesssim t^{-1} \sum_{\sigma\in\mathfrak{S}_{3}} 
	\|F^{\sigma(a)}\|_{Z}\|F^{\sigma(b)}\|_{Z}\|F^{\sigma(c)}\|_{H_{x,y}^{s}}\,,
\end{align}
where $\mathfrak{S}_{3}$ is the group of permutations of three elements. The same estimates hold for the subsystems  involving $\mathcal{R}^{t}$, $\mathcal{Q}^{t}$ and $\mathcal{E}^{t}$. 
\end{lemma}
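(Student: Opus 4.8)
The plan is to reduce every bound to a product estimate on $\T^{d}$, fibred over the Fourier variable $\xi\in\R$. Absorbing the transverse linear flow — setting $\tilde F_{n}(\xi):=\e^{it\lambda_{n}^{2}}\widehat F_{n}(\xi)$, which acts as a fibre-wise isometry on $h^{s}(\T^{d})$, hence on $L^{2}_{x}h^{s}_{y}$ and $H^{s}_{x}\ell^{2}_{y}$ — the phase $\e^{it\Omega(\vec n)}$ disappears, and up to resulting unimodular factors (harmless, since all the norms below only see moduli of Fourier coefficients) $\mathcal{F}\mathcal{N}^{t}_{\R,\mathrm{res}}[F,G,H](\xi,n)$ is $\tfrac{\pi}{t}$ times the $n$-th Fourier coefficient of the pointwise-in-$\xi$ product $\tilde F(\xi,\cdot)\,\overline{\tilde G(\xi,\cdot)}\,\tilde H(\xi,\cdot)$ on $\T^{d}$. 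The operators $\mathcal{R}^{t},\mathcal{Q}^{t}$ arise from the same expression by restricting the convolution index set $\{n_{1}-n_{2}+n_{3}=n\}$ to subsets cut out by $\xi$-independent conditions, with multiplicities bounded by an absolute constant, while $\mathcal{E}^{t}=\mathcal{N}^{t}_{\R,\mathrm{res}}-\mathcal{R}^{t}-\mathcal{Q}^{t}$. Consequently, pointwise in $(\xi,n)$,
\[
\big|\mathcal{F}\mathcal{M}^{t}[F,G,H](\xi,n)\big|\ \lesssim\ \tfrac1t\,\big(|\widehat F(\xi)|\ast|\widehat G(\xi)|\ast|\widehat H(\xi)|\big)(n),\qquad\mathcal{M}^{t}\in\{\mathcal{N}^{t}_{\R,\mathrm{res}},\mathcal{R}^{t},\mathcal{Q}^{t}\},
\]
the bound for $\mathcal{E}^{t}$ following by the triangle inequality. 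Since the $h^{s}(\T^{d})$-, $\ell^{2}$- and $L^{2}$-norms are all monotone under moduli of Fourier coefficients, and the convolution inequalities used depend only on such moduli, it suffices to establish the three asserted estimates for the single operator given by the right-hand side of the display; this covers all four operators at once.

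For the $Z$-estimate, $h^{s}(\T^{d})$ is an algebra for $s>\tfrac d2$ (Lemma~\ref{lem:prod}), whence $\big\||\widehat F(\xi)|\ast|\widehat G(\xi)|\ast|\widehat H(\xi)|\big\|_{h^{s}}\lesssim\|\widehat F(\xi)\|_{h^{s}}\|\widehat G(\xi)\|_{h^{s}}\|\widehat H(\xi)\|_{h^{s}}$, and one takes $\sup_{\xi}$. For the $H^{s}_{x,y}$- and $S$-estimates I split $H^{s}_{x,y}=L^{2}_{x}h^{s}_{y}\oplus H^{s}_{x}\ell^{2}_{y}$. In the $L^{2}_{x}h^{s}_{y}$ part one uses the algebra bound at each $\xi$, pulls two of the three factors out in $\sup_{\xi}\|\cdot\|_{h^{s}}=\|\cdot\|_{Z}$, and keeps the remaining (``strong'') one in $\|\cdot\|_{L^{2}_{x}h^{s}_{y}}\le\|\cdot\|_{H^{s}_{x,y}}$; for the $S$-norm the extra $\japbrak{\xi}^{\sigma}$ is carried by the strong factor, which then enters only through $\|(1-\partial_{x}^{2})^{\sigma/2}\cdot\|_{L^{2}_{x}h^{s}_{y}}\le\|\cdot\|_{S}$. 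In the $H^{s}_{x}\ell^{2}_{y}$ part the key move is to keep the $\ell^{2}_{y}$ on the strong factor and the two others in $L^{\infty}(\T^{d})$: using $h^{s}(\T^{d})\hookrightarrow L^{\infty}(\T^{d})$ — equivalently $h^{s}\hookrightarrow\ell^{1}$ on the Fourier side, which requires $2s>d$ — Young's inequality gives $\big\||\widehat F(\xi)|\ast|\widehat G(\xi)|\ast|\widehat H(\xi)|\big\|_{\ell^{2}}\lesssim\|\widehat F(\xi)\|_{\ell^{1}}\|\widehat G(\xi)\|_{\ell^{1}}\|\widehat H(\xi)\|_{\ell^{2}}\lesssim\|\widehat F(\xi)\|_{h^{s}}\|\widehat G(\xi)\|_{h^{s}}\|\widehat H(\xi)\|_{\ell^{2}}$; integrating in $\xi$ against $\japbrak{\xi}^{s}$ (resp.\ $\japbrak{\xi}^{s+\sigma}$), after $\sup_{\xi}$ on the first two factors, yields $\|H\|_{H^{s}_{x}\ell^{2}_{y}}\le\|H\|_{H^{s}_{x,y}}$ (resp.\ $\|(1-\partial_{x}^{2})^{\sigma/2}H\|_{H^{s}_{x}\ell^{2}_{y}}\le\|H\|_{S}$). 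Running over the six choices of the strong factor reproduces the $\mathfrak{S}_{3}$-symmetrised right-hand sides. Finally, for the $x$-weighted part of the $S$-norm, on the Fourier side $x\leftrightarrow i\partial_{\xi}$; since $\Omega(\vec n)$ and all the constraints defining the four operators are independent of $\xi$, the $\xi$-derivative hits only the profiles, so $\|x\,\mathcal{M}^{t}[F,G,H]\|_{L^{2}_{x}h^{s}_{y}}\lesssim\|\mathcal{M}^{t}[xF,G,H]\|_{L^{2}_{x}h^{s}_{y}}+\|\mathcal{M}^{t}[F,xG,H]\|_{L^{2}_{x}h^{s}_{y}}+\|\mathcal{M}^{t}[F,G,xH]\|_{L^{2}_{x}h^{s}_{y}}$, and each term is handled by the $L^{2}_{x}h^{s}_{y}$ argument with that factor controlled by $\|x\,\cdot\|_{L^{2}_{x}h^{s}_{y}}\le\|\cdot\|_{S}$.

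The only genuinely delicate step — essentially the sole place where the threshold $s>\tfrac d2$ (contained in $s_{0}(\tau,d)$) is used in this lemma — is the treatment of the $H^{s}_{x}\ell^{2}_{y}$ piece: one must resist bounding $\ell^{2}_{y}$ by $h^{s}_{y}$, which would force control of $H^{s+\sigma}_{x}h^{s}_{y}$, a norm strictly stronger than anything the $S$-norm provides, and instead keep the two non-strong factors in $L^{\infty}(\T^{d})$. Everything else is bookkeeping, kept short by the fact that the resonant phase $\e^{it\Omega(\vec n)}$ is independent of $\xi$, so no stationary/non-stationary-phase analysis in the $\xi$-variable is needed at this stage — that is postponed to the treatment of $\mathcal{N}^{t}_{\R,\mathrm{nr}}$.
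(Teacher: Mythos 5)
Your proof is correct and follows the same approach as the paper: the whole lemma reduces to the fibre-wise (in $\xi$) Young's convolution inequality on $\T^{d}$ with two factors in $\ell^{1}_{y}$ (absorbed by $h^{s}_{y}$ via $s>\tfrac{d}{2}$) and one in the relevant norm, after which one takes $\sup_{\xi}$ or $L^{2}_{\xi}$ with the appropriate weight; the ``absorbing the transverse flow'' device is harmless but superfluous, since one can simply take moduli inside the convolution and discard the unimodular phase $\e^{it\Omega(\vec n)}$ directly. The paper compresses all of this into one line and leaves the $x$-weighted and $H^{s}_{x}\ell^{2}_{y}$ pieces implicit; your write-up makes those explicit — in particular the observation that for the $x$-weighted part one must keep the differentiated factor in the $L^{2}_{\xi}$ slot, which is the only place where a little care is genuinely needed.
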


\begin{proof} For fixed $\xi\in\R$ and $t>0$ we have from the Young's convolution inequality that
\[
\|\mathcal{F}\mathcal{N}_{\R,\mathrm{res}}^{t}[F^{a},F^{b},F^{c}](\xi)\|_{h_{y}^{s}}
	\lesssim t^{-1}
	\sum_{\sigma\in\mathfrak{S}_{3}}\|\widehat{F^{\sigma(a)}}(\xi)\|_{\ell_{y}^{1}}
	\|\widehat{F^{\sigma(b)}}(\xi)\|_{\ell_{y}^{1}}
	\|\widehat{F^{\sigma(c)}}(\xi)\|_{h_{y}^{s}}\,,
\]
and the bounds follow from the assumption $s>\frac{d}{2}$. 
\end{proof}

\subsubsection{The effective system}
\label{sec:eff}
We now handle the contribution of the effective interactions defined in \eqref{eq:eff}. Here, we use the key property that the effective system preserves the high-frequency super-actions, as proved in Lemma \ref{lem:cons}. 

\begin{lemma}\label{lem:eff}
Suppose that $G\in C([1,T];H^{s}(\R\times\T^{d}))$ is a solution to the effective system and that $\sup_{t\in[1,T]}\|G(t)\|_{Z}<\infty$.
\begin{equation}
\label{eq:s-eff}
i\partial_{t}G(t) = \mathcal{N}_{\mathrm{eff}}^{t}[G(t)]\,.
\end{equation} 
Then, for all $t\in[1,T]$, $\alpha\in\N$ and $\xi\in\R$ we have 
\begin{equation}
\label{eq:cs}
	\|\pi_{\alpha}\widehat{G}(t,\xi)\|_{\ell^{2}(\T^{d})} 
	= \|\pi_{\alpha}\widehat{G}(1,\xi)\|_{\ell^{2}(\T^{d})}\,.
\end{equation}
Moreover, 
\begin{equation}
\label{eq:cs1}
\|G(t)\|_{Z} = \|G(1)\|_{Z}\,,\quad \|G(t)\|_{H^{s}(\R\times\T^{d})} = \|G(1)\|_{H^{s}(\R\times\T^{d})}\,.
\end{equation}
\end{lemma}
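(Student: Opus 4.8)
The plan is to derive \eqref{eq:cs} from Lemma \ref{lem:cons} by noting that the effective system decouples over the Fourier variable $\xi\in\R$. Indeed, writing $G(t,x,y)=\e^{it\Delta}F(t,x,y)$ is not needed here; rather, one takes the full spatial Fourier transform in both $x$ and $y$ of the equation \eqref{eq:s-eff}. Since $\mathcal{N}_{\mathrm{eff}}^{t}=\mathcal{R}^{t}+\mathcal{Q}^{t}$, and the formulas \eqref{eq:R0} and \eqref{eq:Q} for $\mathcal{F}\mathcal{R}^{t}$ and $\mathcal{F}\mathcal{Q}^{t}$ only involve the values $\widehat{F}_{n_j}(\xi)$ at one and the same frequency $\xi$, the function $t\mapsto \big(\widehat{G}_{n}(t,\xi)\big)_{n\in\Z^{d}}$ solves, for each fixed $\xi$, an autonomous-in-$\xi$ system that is (up to the harmless overall factor $2\pi/t$ or $\pi/t$, and up to the explicit low-frequency resonant part which does not mix clusters) exactly the system of Definition \ref{def:Lamb} for the sequence $a_n(t):=\widehat{G}_n(t,\xi)$. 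More precisely, I would first record that the resonant part $\mathcal{R}^{t}$ does not change $\|\pi_\alpha \widehat G(\xi)\|_{\ell^2}$ either: its contribution to $\tfrac14\tfrac{d}{dt}\|\pi_\alpha\widehat G(\xi)\|_{\ell^2}^2$ is $\im$ of a sum that is manifestly real after taking conjugates and swapping $n\leftrightarrow n_1$, exactly as in the proof of Lemma \ref{lem:4w} / Lemma \ref{lem:cons}. So for $\alpha\geq\alpha_0$ the time derivative of $\|\pi_\alpha\widehat G(t,\xi)\|_{\ell^2}^2$ vanishes by Lemma \ref{lem:cons} applied to $a_n(t)=\widehat G_n(t,\xi)$, while for $\alpha<\alpha_0$ one notes $\mathcal{Q}^t$ vanishes on those modes and $\mathcal{R}^t$ preserves $\ell^2$ on each cluster as well — actually it suffices that $\mathcal R^t$ preserves the total $\ell^2$ mass and each low cluster's mass separately, which follows from the same cancellation. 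This gives \eqref{eq:cs}.

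Next, for \eqref{eq:cs1}: the $Z$-norm identity is immediate from \eqref{eq:cs}, since
\[
\|G(t)\|_{Z}^2=\sup_{\xi\in\R}\|\widehat G(t,\xi)\|_{h^s(\T^d)}^2=\sup_{\xi\in\R}\sum_{\alpha\geq0}K_\alpha^{2s}\|\pi_\alpha\widehat G(t,\xi)\|_{\ell^2}^2
\]
and each summand is $t$-independent by \eqref{eq:cs}. For the $H^{s}(\R\times\T^d)$-norm, one writes it via Definition \ref{def:sob} and Parseval in $x$ as a double integral/sum
\[
\|G(t)\|_{H^s}^2\sim \int_\R\Big(\sum_{\alpha\geq0}(K_\alpha^{2s}+\langle\xi\rangle^{2s})\|\pi_\alpha\widehat G(t,\xi)\|_{\ell^2}^2\Big)d\xi
\]
(using the equivalence of $h^s$ with the standard Sobolev norm noted after Definition \ref{def:sob}, and $\|U\|_{H^s_{x,y}}^2\sim\|U\|_{L^2_xh^s_y}^2+\|U\|_{H^s_x\ell^2_y}^2$), and again every term $\|\pi_\alpha\widehat G(t,\xi)\|_{\ell^2}$ is conserved pointwise in $\xi$ by \eqref{eq:cs}, so the whole integrand is $t$-independent and the claim follows by integrating in $\xi$.

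I expect the only real subtlety to be a clean justification that the $\xi$-slices genuinely decouple and that one is allowed to differentiate $\|\pi_\alpha\widehat G(t,\xi)\|_{\ell^2}^2$ in $t$ under the (infinite) sum over $n\in\mathscr{C}_\alpha$ — but since $\mathscr{C}_\alpha$ is a \emph{finite} set (it is dyadic, hence $\#\mathscr{C}_\alpha\lesssim K_\alpha^d$) this is harmless, and the differentiation in $t$ is legitimate because $G\in C([-T,T];S)\subset C^1$ in the relevant sense thanks to the equation. A minor bookkeeping point is to handle the low-frequency resonant interactions: $\mathcal R^t$ couples all clusters through the term $\tfrac{2\pi}{t}\sum_{n_1}\widehat F_{n_1}\overline{\widehat G_{n_1}}\widehat H_n$, but this term only multiplies $\widehat H_n$ by a purely imaginary (after the $i\partial_t$) scalar $\tfrac{2\pi}{t}\sum_{n_1}|\widehat G_{n_1}(\xi)|^2$ depending only on $\xi$, hence it is a gauge transformation that changes no modulus $|\widehat G_n(t,\xi)|$; the remaining piece $-\tfrac{\pi}{t}|\widehat G_n|^2\widehat G_n$ is also a pointwise gauge. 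So in fact $\mathcal R^t$ alone preserves every $|\widehat G_n(t,\xi)|$ individually, which makes the cancellation argument even cleaner and confirms \eqref{eq:cs}.
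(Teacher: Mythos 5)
Your proof is correct and follows essentially the same route as the paper: fix $\xi$, view $a_n(t)=\widehat G_n(t,\xi)$ as a solution of the effective ODE system, invoke Lemma \ref{lem:cons} for the quasi-resonant part, and verify by the conjugate-and-swap cancellation that the resonant part does not change cluster masses, then integrate in $\xi$ for the $H^s$ identity. Your extra observation that $\mathcal{R}^t$ acts as a pointwise gauge transformation (hence preserves each $|\widehat G_n(t,\xi)|$ individually) is a clean reformulation of the same cancellation and is consistent with the paper's computation.
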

\begin{proof} 
Fix $\xi\in\R$, and consider 
\[
a(\xi;t,y) := \widehat{G}(t,\xi,y) = \sum_{n\in\Z^{d}}\widehat{G}_{n}(t,\xi)\e^{in\cdot y}\,,\quad a_{n}(\xi;t):= \widehat{G}_{n}(t,\xi)\,.
\]
In what follows we forget the variable $\xi$, which is just parametric in the system \eqref{eq:s-eff}. By the definition \eqref{eq:eff} of the effective system, $a$ is solution to
\[
i\partial_{t}a_{n}(t) = \mathcal{F}\mathcal{R}[a(t)](n) + \mathcal{F}\mathcal{Q}^{t}[a(t)](n)\,,\quad n\in\Z^{d}\,.
\]
Therefore, for $\alpha\geq0$ we have
\[
\frac{1}{2}\frac{\mathrm{d}}{\mathrm{d}t}\|\pi_{\alpha}a\|_{\ell^{2}(\T^{d})}^{2} 
	= \im
	\big(
	\sum_{n\in\mathscr{C}_{\alpha}}
	\mathcal{F}\mathcal{R}[a(t)](n)\overline{a_{n}}(t) + \mathcal{F}\mathcal{Q}^{t}[a(t)](n)\overline{a_{n}}(t)
	\big)\,.
\]
According to Lemma \ref{lem:cons}, the quasi-resonant interactions $\mathcal{Q}^{t}$ do not contribute. We easily prove that the resonant interactions do not contribute either:
\[
\im
	\big(
	\sum_{n\in\mathscr{C}_{\alpha}}\mathcal{F}\mathcal{R}[a(t)](n)\overline{a_{n}}(t)
	\big)
	=
	\im
	\big(
	2\sum_{n\in\mathscr{C}_{\alpha}}\sum_{n_{1}\in\Z^{d}}|a_{n_{1}}(t)|^{2}|a_{n}(t)|^{2} - \sum_{n\in\mathscr{C}_{\alpha}}|a_{n}(t)|^{4}
	\big)
	=0\,.
\]
This proves \eqref{eq:cs}.  Then, the identities \eqref{eq:cs1} follow form Definition \ref{def:sob} of the $h^{s}$-norm based on the cluster decomposition: for all $\xi\in\R$,
\begin{multline*}
\|\widehat{G}(t,\xi)\|_{h^{s}(\T^{d})} 
	=  \big(\sum_{\alpha\in\N}K_{\alpha}^{2s}\|\pi_{\alpha}\widehat{G}(t,\xi)\|_{\ell^{2}(\T^{d})}^{2} \big)^{\frac{1}{2}} 
	\\
	=  \big(\sum_{\alpha\in\N}K_{\alpha}^{2s}\|\pi_{\alpha}\widehat{G}(1,\xi)\|_{\ell^{2}(\T^{d})}^{2} \big)^{\frac{1}{2}} 
	= \|\widehat{G}(1,\xi)\|_{h^{s}(\T^{d})} \,.
\end{multline*}
This completes the proof of Lemma \ref{lem:eff}.
 \end{proof}

\subsubsection{The time oscillations}
\label{sec:fast}
We turn to the contribution of $\mathcal{E}^{t}$ defined in \eqref{eq:Eo}. We first prove a trilinear estimate that overcomes the small-divisor problems arising in the Poincaré--Dulac normal form. This estimate is a consequence of Lemma \ref{lem:small-div}, which was itself a consequence of the frequency configuration of the interactions contained in $\mathcal{E}^{t}$. 
\begin{lemma}\label{lem:fast}
For $n\in\bigcup_{\alpha\geq\alpha_{0}}\mathscr{C}_{\alpha}$, we let 
\begin{equation}
\label{eq:tilde}
\mathcal{F}\widetilde{\mathcal{E}}^{t}[F^{a},F^{b},F^{c}](\xi,n):= 
	\sum_
	{\substack{(n_{1},n_{2},n_{3})\notin\Lambda^{(1)}(n)\cup\Lambda^{(3)}(n)\\n_{1}-n_{2}+n_{3}=n\\
	n_{1},n_{3}\neq n}}
	\frac{\e^{it\Omega(\vec{n})}}{i\Omega(\vec{n})}
	\widehat{F}_{n_{1}}^{a}(\xi)
	\overline{\widehat{F}_{n_{2}}^{b}}(\xi)
	\widehat{F}_{n_{3}}^{c}(\xi)\,.
\end{equation}
Then, for all $t>0$ and $\xi\in\R$,
\begin{align*}
	\|
	\widehat{\widetilde{\mathcal{E}}^{t}[F^{a},F^{b},F^{c}]}(\xi)
	\|_{h^{s}(\T^{d})} 
	&\lesssim
	\|\widehat{F^{a}}(\xi)\|_{h^{s}(\T^{d})}\|\widehat{F^{b}}(\xi)\|_{h^{s}(\T^{d})}\|\widehat{F^{c}}(\xi)\|_{h^{s}(\T^{d})}\,.
\intertext{In particular,}
	\|
	\widetilde{\mathcal{E}}^{t}[F^{a},F^{b},F^{c}]
	\|_{Z}
	&\lesssim
	\|F^{a}\|_{Z}\|F^{b}\|_{Z}\|F^{c}\|_{Z}\,.
	\\
	\|
	\widetilde{\mathcal{E}}^{t}[F^{a},F^{b},F^{c}]
	\|_{S} &\lesssim \sum_{\sigma\in\mathfrak{S}_{3}} \|F^{\sigma(a)}\|_{S}\|F^{\sigma(b)}\|_{Z}\|F^{\sigma(c)}\|_{Z}\,.
\end{align*}
\end{lemma}

\begin{proof} For fixed $\xi\in\R$, we have by duality that 
\[
	\|\widetilde{\mathcal{E}}^{t}[F^{a},F^{b},F^{c}](\xi)\|_{h^{s}(\T^{d})}
	\lesssim
	\sup_{\|v\|_{\ell^{2}}\leq1}
	\sum_{\alpha\geq\alpha_{0}}\sum_{n\in\mathscr{C}_{\alpha}}
	K_{\alpha}^{s}
	|
	\mathcal{F}
	\widetilde{\mathcal{E}}^{t}[F^{a},F^{b},F^{c}]
	(\xi,n)
	|
	|v_{n}|\,.
\]
According to Lemma \ref{lem:small-div}, we have that for all $n\in\bigcup_{\alpha\geq\alpha_{0}}\mathscr{C}_{\alpha}$ and
\[
(n_{1},n_{2},n_{3})~\in~(\Z^{d})^{3}\setminus(\Lambda^{(1)}(n)\cup\Lambda^{(3)}(n))
\]
with $n=n_{1}-n_{2}+n_{3}$, 
\[
\frac{1}{|\Omega(\vec{n})|}K_{\alpha}^{s} \lesssim (n_{1}^{\ast})^{s}(n_{2}^{\ast})^{\frac{4\tau}{c(d)}}\,.
\]
We deduce that 
\[
|\mathcal{F}\widetilde{\mathcal{E}}^{t}[F^{a},F^{b},F^{c}](\xi,n)|
	\lesssim
	\sum_{n_{1}-n_{2}+n_{3}=n} (n_{1}^{\ast})^{s}(n_{2}^{\ast})^{\frac{4\tau}{c(d)}} |\widehat{F}_{n_{1}}^{a}(\xi)\widehat{F}_{n_{2}}^{b}(\xi)\widehat{F}_{n_{3}}^{c}(\xi)|\,.
\]
We conclude from the Young's convolution inequality and the Sobolev embedding that for all $\xi\in\R$ and $\eta>0$,
\begin{multline*}
\|
	\mathcal{F}
	\widetilde{\mathcal{E}}^{t}[F^{a},F^{b},F^{c}](\xi)
\|_{h^{s}(\T^{d})}
	\lesssim
	\sum_{\sigma\in\mathfrak{S}_{3}}
	\|\langle n\rangle^{s} \widehat{F_{n}^{\sigma(a)}}(\xi)\|_{\ell_{n}^{2}}
	\|\langle n\rangle^{\frac{4\tau}{c(d)}}\widehat{F_{n}^{\sigma(b)}}(\xi)\|_{\ell_{n}^{1}}
	\|\widehat{F_{n}^{\sigma(c)}}(\xi)\|_{\ell_{n}^{1}}\,,\\
	\lesssim_{\eta}
	\sum_{\sigma\in\mathfrak{S}_{3}}
	\|\langle n\rangle^{s} \widehat{F_{n}^{\sigma(a)}}(\xi)\|_{\ell_{n}^{2}}
	\|\langle n\rangle^{\frac{4\tau}{c(d)}+\frac{d}{2}+\eta}\widehat{F_{n}^{\sigma(b)}}(\xi)\|_{\ell_{n}^{2}}
	\|\langle n\rangle^{\frac{d}{2}+\eta}\widehat{F_{n}^{\sigma(c)}}(\xi)\|_{\ell_{n}^{2}}\,,
\end{multline*}
which is conclusive under our assumption that $s>\frac{d}{2}+\frac{4\tau}{c(d)}$.
\end{proof}

\begin{lemma}[Poincaré--Dulac normal form]\label{lem:Poi-Dul} Suppose that $T\geq1$ and $F\in X_{T}$. Then, for all $t\in[1,T]$ and $\xi\in\R$,
\begin{equation}
	\label{eq:pde}
	\sum_{\alpha\in\N}K_{\alpha}^{2s}
	 \big|
	\sum_{n\in\mathscr{C}_{\alpha}}
	\int_{1}^{t}\mathcal{F}\mathcal{E}^{\tau}[F(\tau)](\xi,n)\overline{\widehat{F}_{n}}(\tau,\xi)
	\mathrm{d}\tau
	 \big|
	\lesssim \|F\|_{X_{T}}^{4}\,.
\end{equation}
Moreover, for all $t_{0}\in[\max(1,t/4),t]$,
\begin{equation}
		\label{eq:pds}
		\left\|
		\int_{t_{0}}^{t}\mathcal{E}^{\tau}[F(\tau)]\,\mathrm{d}\tau
		\right\|_{Z}
		+
		\left\|
		\int_{t_{0}}^{t}\mathcal{E}^{\tau}[F(\tau)]\,\mathrm{d}\tau
		\right\|_{H^{s}(\R\times\T^{d})}
		\lesssim t^{-1+4\delta}\|F\|_{X_{T}}^{3}\,.
\end{equation}

\end{lemma}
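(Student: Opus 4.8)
The plan is to implement the Poincaré--Dulac normal form by integrating by parts in time, using that $\mathcal{E}^t$ carries a genuinely oscillating phase $\e^{it\Omega(\vec n)}$ with $\Omega(\vec n)\neq 0$ on the range of summation. Write $\e^{it\Omega(\vec n)} = \frac{1}{i\Omega(\vec n)}\partial_t\e^{it\Omega(\vec n)}$ inside the definition of $\mathcal{F}\mathcal{E}^t[F,F,F](\xi,n)$; this turns the time integral into a boundary term plus a term where $\partial_t$ hits one of the three profiles $\widehat F_{n_j}(\tau,\xi)$ (and, for \eqref{eq:pde}, also the extra factor $\overline{\widehat F_n}(\tau,\xi)$). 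After this manipulation the division by $\Omega(\vec n)$ is exactly what is controlled by Lemma \ref{lem:small-div}, i.e. the trilinear object that appears is (up to the cutoff) $\widetilde{\mathcal{E}}^\tau$ from Lemma \ref{lem:fast}. So the strategy is: (i) integrate by parts in $\tau$; (ii) recognize the resulting quadrilinear/trilinear forms as built from $\widetilde{\mathcal{E}}^\tau$; (iii) bound the boundary terms and the bulk terms using Lemma \ref{lem:fast} together with the $X_T$-norm bookkeeping, where the time derivative of a profile is paid for by the $\langle t\rangle^{1-\delta}\|\partial_t F\|_S$ component of $\|F\|_{X_T}$.

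More concretely, for \eqref{eq:pde} I would pair with $\overline{\widehat F_n}(\tau,\xi)$, sum over $n\in\mathscr{C}_\alpha$, weight by $N_\alpha^{2s}$, and after the integration by parts I get: a boundary term at $\tau=1$ and $\tau=t$ of the shape $N_\alpha^{s}|\langle \widetilde{\mathcal{E}}^\tau[F,F,F], \pi_\alpha\widehat F\rangle|$, which by Cauchy--Schwarz in $n$ and Lemma \ref{lem:fast} is $\lesssim \|F\|_Z^3\|F\|_Z \le \|F\|_{X_T}^4$ after summing in $\alpha$; and a bulk integral $\int_1^t$ of terms where $\partial_\tau$ falls on one profile. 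When $\partial_\tau$ hits one of the inner profiles, the $S$-version of Lemma \ref{lem:fast} gives $\lesssim \|\partial_\tau F(\tau)\|_S\|F(\tau)\|_Z^2$ times $\|F(\tau)\|_Z$ from the pairing, i.e. $\lesssim \langle\tau\rangle^{-(1-\delta)}\langle\tau\rangle^{3\delta}\|F\|_{X_T}^4$ pointwise, which is integrable on $[1,t]$ for $\delta$ small; when $\partial_\tau$ hits $\overline{\widehat F_n}$ one argues the same way with the roles of the factors permuted. Thus \eqref{eq:pde} follows.

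For \eqref{eq:pds} the mechanism is the same but now I keep the full $(x,y)$-dependence and measure in $H^s(\R\times\T^d)$ rather than testing against $F$. Integrating by parts in $\tau$ on $[\frac t4,t]$, the primitive $\widetilde{\mathcal{E}}^\tau$ has its $S$-norm controlled by $\sum_{\sigma}\|F^{\sigma(a)}\|_S\|F^{\sigma(b)}\|_Z\|F^{\sigma(c)}\|_Z$ (the $S$-bound in Lemma \ref{lem:fast}), hence by $\langle\tau\rangle^{\delta}\|F\|_{X_T}\cdot\|F\|_{X_T}^2 = \langle\tau\rangle^{\delta}\|F\|_{X_T}^3$; the boundary terms at $\tau = t$ and $\tau = t/4$ are therefore $\lesssim t^{\delta}\|F\|_{X_T}^3$, and dividing by the small divisor is already absorbed in $\widetilde{\mathcal{E}}$. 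But we have \emph{not} yet gained the factor $t^{-1}$: that comes from the explicit prefactor $\frac{\pi}{t}$ (more precisely $\frac1\tau$) sitting in front of $\mathcal{F}\mathcal{E}^\tau$ in \eqref{eq:Eo}/\eqref{eq:d-fo}. Tracking it carefully: on $[\frac t4,t]$ one has $\tau\sim t$, so the $\frac1\tau$ contributes $t^{-1}$, and the $\partial_\tau$-falls-on-$\frac1\tau$ term contributes $\int_{t/4}^t \tau^{-2}\cdot\langle\tau\rangle^{\delta}\|F\|_{X_T}^3\,d\tau \lesssim t^{-1+\delta}\|F\|_{X_T}^3$; the $\partial_\tau$-falls-on-a-profile term contributes $\int_{t/4}^t \tau^{-1}\|\partial_\tau F\|_S\|F\|_Z^2\,d\tau \lesssim \int_{t/4}^t \tau^{-1}\tau^{-1+\delta}\tau^{2\delta}\|F\|_{X_T}^3\,d\tau \lesssim t^{-1+3\delta}\|F\|_{X_T}^3$. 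Collecting, the worst exponent is $-1+4\delta$ (the extra $\delta$'s coming from the $\langle\tau\rangle^\delta\|F\|_S$ accounting and the dyadic window), giving \eqref{eq:pds}.

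The main obstacle, and the reason the lemma is nontrivial rather than routine, is controlling the small divisor $1/\Omega(\vec n)$ that the integration by parts introduces: one must verify that the frequency configuration in the support of $\mathcal{E}^t$ — precisely the complement of $\Lambda^{(1)}(n)\cup\Lambda^{(3)}(n)$ — never produces a divisor too small to be compensated by available derivatives. This is exactly the content of Lemma \ref{lem:small-div}, whose three-case analysis (using the admissibility condition \eqref{eq:ad} in Case 1, the cluster separation \eqref{eq:sep} in Case 2, and the definition of $\Lambda^{(1)}$ in Case 3) is the crux; once that estimate is in hand, Lemma \ref{lem:fast} packages it into clean trilinear bounds and the present proof is bookkeeping with the $X_T$-norm. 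A secondary point requiring care is that the $\partial_\tau F$ that appears after the integration by parts must be re-expressed via the profile equation \eqref{eq:N} (so $\partial_\tau F = -i\mathcal{N}^\tau[F,F,F]$, cubic in $F$, making the bulk term quadrilinear) or, more economically, simply estimated by the $\langle\tau\rangle^{1-\delta}\|\partial_\tau F\|_S$ slot of $\|F\|_{X_T}$, which is how I would proceed to keep the argument self-contained and short.
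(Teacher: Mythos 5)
Your proposal is correct and follows essentially the same route as the paper: integrate by parts in $\tau$ using the oscillating phase, absorb the small divisor into the modified trilinear form $\widetilde{\mathcal{E}}^\tau$ controlled by Lemma \ref{lem:fast}, and close the estimate via the $\langle t\rangle^{1-\delta}\|\partial_t F\|_S$ slot of the $X_T$-norm (rather than expanding $\partial_\tau F$ cubically), which is exactly what the paper does. The only point you leave implicit is the low-frequency block $n\in\bigcup_{\alpha<\alpha_0}\mathscr{C}_\alpha$, where $\Lambda^{(1)},\Lambda^{(3)}$ are empty and the frequency weight $K_\alpha^s$ is bounded, so the small-divisor loss $|\Omega(\vec n)|^{-1}\lesssim (n_1^\ast)^{2\tau}$ is absorbed directly using $s>2\tau$; the paper disposes of this case in a sentence before turning to the high-frequency argument you give.
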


\begin{proof}

Recalling the expression \eqref{eq:Eo} of $\mathcal{E}^{t}$, we have that for fixed $\xi\in\R$ and $n~\in~\bigcup_{\alpha\geq\alpha_{0}}\mathscr{C}_{\alpha}$,
\begin{align*}
\mathcal{F}\mathcal{E}^{\tau}[F(\tau)](\xi,n) 
	&= 
		\frac{\pi}{\tau}\sum_{\substack{(n_{1},n_{2},n_{3})\notin(\Lambda^{(1)}(n)\cup\Lambda^{(3)}(n))\\n_{1}-n_{2}+n_{3}=n\\
		n_{1},n_{3}\neq n}}
	\e^{i\tau\Omega(\vec{n})}
	\widehat{F}_{n_{1}}(\tau,\xi)
	\overline{\widehat{F}_{n_{2}}}(\tau,\xi)
	\widehat{F}_{n_{3}}(\tau,\xi)\,.
\intertext{For $n\in\bigcup_{\alpha<\alpha_{0}}\mathscr{C}_{\alpha}$, }
\mathcal{F}\mathcal{E}^{\tau}[F(\tau)](\xi,n) 
	&=
	\frac{\pi}{\tau}\sum_{\substack{n_{1}-n_{2}+n_{3}=n\\\Omega(\vec{n})\neq0}}\e^{i\tau\Omega(\vec{n})}
	\widehat{F}_{n_{1}}(\tau,\xi)
	\overline{\widehat{F}_{n_{2}}}(\tau,\xi)
	\widehat{F}_{n_{3}}(\tau,\xi)\,.
\end{align*}
Since the terms contributing to $\mathcal{E}^{t}$ are not resonant, $\Omega(\vec{n})\neq0$ and the Leibniz rule gives 
\begin{align*}
\frac{\pi}{\tau}
	\e^{i\tau\Omega(\vec{n})}
	\widehat{F}_{n_{1}}(\tau,\xi)
	\overline{\widehat{F}_{n_{2}}}(\tau,\xi)
	\widehat{F}_{n_{3}}(\tau,\xi)
	=&
	\frac{\mathrm{d}}{\mathrm{d}\tau} \big(
	\frac{\pi}{\tau}
	\frac{\e^{i\tau\Omega(\vec{n})}}{i\Omega(\vec{n})}
	\widehat{F}_{n_{1}}(\tau,\xi)
	\overline{\widehat{F}_{n_{2}}}(\tau,\xi)
	\widehat{F}_{n_{3}}(\tau,\xi) 
	 \big)
	 \\
	&+\frac{\pi}{\tau^{2}}\frac{\e^{i\tau\Omega(\vec{n})}}{i\Omega(\vec{n})}
	\widehat{F}_{n_{1}}(\tau,\xi)
	\overline{\widehat{F}_{n_{2}}}(\tau,\xi)
	\widehat{F}_{n_{3}}(\tau,\xi) 
	 \\
	&- \frac{\pi}{\tau}\frac{\e^{i\tau\Omega(\vec{n})}}{i\Omega(\vec{n})}
		\frac{\mathrm{d}}{\mathrm{d}\tau}
	 \big(
	\widehat{F}_{n_{1}}(\tau,\xi)
	\overline{\widehat{F}_{n_{2}}}(\tau,\xi)
	\widehat{F}_{n_{3}}(\tau,\xi) 
	 \big)
	\,.
\end{align*}
Hence, we can gain decay in time at the price of a small divisor $|\Omega(\vec{n})|^{-1}$. [TODO] next sentences unclear, correctFor the contributions to low-frequencies $n\in\bigcup_{\alpha<\alpha_{0}}\mathscr{C}_{\alpha}$. This loss can be easily absorbed in the energy estimates~\eqref{eq:pde} and~\eqref{eq:pds}: indeed, we can bound 
\[
K_{\alpha}^{s} \leq K_{\alpha_{0}}^{s} := C(s,d,\mathrm{A})\,,
\]
and the loss $|\Omega(\vec{n})|^{-1}\lesssim (n_{1}^{\ast})^{2\tau}$ can be absorbed (since $s>2\tau$). 
\medskip

For this reason, we only give details for the terms contributing to the frequency indices $n\in\bigcup_{\alpha\geq\alpha_{0}}\mathscr{C}_{\alpha}$. We will exploit Lemma \ref{lem:fast}, in which we proved how to overcome the small divisor loss. Defining the trilinear interaction $\widetilde{\mathcal{E}}^{t}$ as in \eqref{eq:tilde}, the quantity we need to bound reads
\begin{multline}
\label{eq:dt}
	\frac{\mathrm{d}}{\mathrm{d}\tau}
	 \big(
	\frac{\pi}{\tau}\mathcal{F}\widetilde{\mathcal{E}}^{\tau}[F,F,F](\xi,n) 
	 \big) 
	+\frac{\pi}{\tau^{2}}
	\mathcal{F}\widetilde{\mathcal{E}}^{\tau}[F,F,F](\xi,n)
	\\
	- 
	\frac{\pi}{\tau}
	 \big(\mathcal{F}\widetilde{\mathcal{E}}^{\tau}[\partial_{\tau}F,F,F]
	+
	\mathcal{F}\widetilde{\mathcal{E}}^{\tau}[F,\partial_{\tau} F,F]
	+
	\mathcal{F}\widetilde{\mathcal{E}}^{\tau}[F,F,\partial_{\tau}F]
	 \big)(\xi,n)\,,
\end{multline}
where  $n\in\bigcup_{\alpha\geq\alpha_{0}}\mathscr{C}_{\alpha}$. 
\medskip

We first prove the energy estimate~\eqref{eq:pde}. We integrate by parts and control the first contribution in~\eqref{eq:dt}:
\begin{multline*}
	\int_{1}^{t}
	\frac{\mathrm{d}}{\mathrm{d}\tau}
	 \big(\frac{\pi}{\tau}
	\mathcal{F}
	\widetilde{\mathcal{E}}^{\tau}[F](\xi,n)
	 \big)
	\overline{\widehat{F}_{n}}(\xi,\tau)
	\mathrm{d}\tau
	=
	-\int_{1}^{t}
	\frac{\pi}{\tau}
	\mathcal{F}
	\widetilde{\mathcal{E}}^{\tau}[F](\xi,n)
	\partial_{\tau}\overline{\widehat{F}_{n}}(\xi,\tau)
	\mathrm{d}\tau \\
		+ \frac{\pi}{t}\mathcal{F}\widetilde{\mathcal{E}}^{t}[F](\xi,n)\overline{\widehat{F}_{n}}(\xi,t)
		- \pi\mathcal{F}\widetilde{\mathcal{E}}^{1}[F](\xi,n)\overline{\widehat{F}_{n}}(\xi,1)\,.
\end{multline*}

Then, we deduce from the definition of the $Z$-norm that 
\begin{multline*}
	\sum_{\alpha\in\N}
	K_{\alpha}^{2s}
	\big|
	\sum_{n\in\mathscr{C}_{\alpha}}\int_{1}^{t}
	\frac{\mathrm{d}}{\mathrm{d}\tau}
	 \big(\frac{\pi}{\tau}
	\mathcal{F}
	\widetilde{\mathcal{E}}^{\tau}[F](\xi,n)
	 \big)
	\overline{\widehat{F}_{n}}(\xi,\tau)
	\mathrm{d}\tau
	 \big|
	\\
	\lesssim
	\int_{1}^{t}
	\|
	\mathcal{F}
	\widetilde{\mathcal{E}}^{\tau}[F]
	\|_{Z}
	\|
	\partial_{\tau}\widehat{F}
	\|_{Z}
	\frac{d\tau}{\tau}
	+ 
	\|
	\mathcal{F}\widetilde{\mathcal{E}}^{t}[F]
	\|_{Z}
	\|
	\widehat{F}_{n}(t)
	\|_{Z}
	+
	\|
	\mathcal{F}\widetilde{\mathcal{E}}^{1}[F]
	\|_{Z}
	\|
	\widehat{F}_{n}(1)
	\|_{Z}
	\\
	\lesssim
	\int_{1}^{t}
	\|F(\tau)\|_{Z}^{3}\|\partial_{\tau}F(\tau)\|_{Z}\frac{d\tau}{\tau}
	+
	\|F(t)\|_{Z}^{4}+\|F(1)\|_{Z}^{4}
	\,. 
\end{multline*}
In the last estimate, we applied the trilinear bound obtained in Lemma \ref{lem:fast}. By definition \eqref{eq:X} of the $X_{T}$-norm we obtain that for all $t\in[1,T]$, 
\begin{multline*}
	\sum_{\alpha\in\N}
	K_{\alpha}^{2s}
 	\big|
	\int_{1}^{t}
	\frac{\mathrm{d}}{\mathrm{d}\tau}
	 \big(\frac{\pi}{\tau}
	\mathcal{F}
		\widetilde{\mathcal{E}}^{\tau}[F](\xi,n)
	 \big)
	\overline{\widehat{F}_{n}}(\xi,\tau)
	\mathrm{d}\tau
	 \big|
	\lesssim \|F\|_{X_{T}}^{4}
	 \big(
	1+\int_{1}^{t}\frac{d\tau}{\tau^{2(1-\delta)}}
	 \big)	
\lesssim \|F\|_{X_{T}}^{4}\,.
\end{multline*}
The remaining contributions satisfy the same estimate. We now prove \eqref{eq:pds}. Since $\|G\|_{H^{s}}\leq\|G\|_{S}$, it suffices to estimate the $Z$- and $S$-norms. We only treat the high frequencies $n\in\bigcup_{\alpha\geq\alpha_{0}}\mathscr{C}_{\alpha}$, since the low-frequency loss is absorbed as above. Fix~$t_{0}\in[\max(1,t/4),t]$. Integration by parts give
\begin{multline*}
\int_{t_{0}}^{t}\mathcal{E}^{\tau}[F]\,\mathrm{d}\tau
=
\frac{\pi}{t}\widetilde{\mathcal{E}}^{t}[F,F,F]
-\frac{\pi}{t_{0}}\widetilde{\mathcal{E}}^{t_{0}}[F,F,F]
\\
+\int_{t_{0}}^{t}
\Big(
\frac{\pi}{\tau}\widetilde{\mathcal{E}}^{\tau}[F,F,F]
-\pi\sum_{\mathrm{cyc}}\widetilde{\mathcal{E}}^{\tau}[\partial_{\tau}F,F,F]
\Big)
\frac{\mathrm{d}\tau}{\tau}\,,
\end{multline*}

where $\sum_{\mathrm{cyc}}$ runs over the three placements of $\partial_{\tau}F$. The last two bounds of Lemma~\ref{lem:fast} control both required norms. In particular,
\[
\|\widetilde{\mathcal{E}}^{\tau}[F^{a},F^{b},F^{c}]\|_{Z}
+
\|\widetilde{\mathcal{E}}^{\tau}[F^{a},F^{b},F^{c}]\|_{S}
\lesssim
\sum_{\rho\in\mathfrak{S}_{3}}
\|F^{\rho(a)}\|_{S}\|F^{\rho(b)}\|_{Z}\|F^{\rho(c)}\|_{Z}\,.
\]
Using \eqref{eq:X} and $t_{0}\geq t/4$, the boundary terms satisfy
\[
\frac{1}{t}\big(\|\widetilde{\mathcal{E}}^{t}[F]\|_{Z}
+\|\widetilde{\mathcal{E}}^{t}[F]\|_{S}\big)
+\frac{1}{t_{0}}\big(\|\widetilde{\mathcal{E}}^{t_{0}}[F]\|_{Z}
+\|\widetilde{\mathcal{E}}^{t_{0}}[F]\|_{S}\big)
\lesssim
t^{-1+\delta}\|F\|_{X_{T}}^{3}\,.
\]
The same bounds give
\begin{multline*}
\int_{t_{0}}^{t}
\Big\|
\frac{\pi}{\tau}\widetilde{\mathcal{E}}^{\tau}[F,F,F]
-\pi\sum_{\mathrm{cyc}}\widetilde{\mathcal{E}}^{\tau}[\partial_{\tau}F,F,F]
\Big\|_{Z}
\frac{\mathrm{d}\tau}{\tau}
\\
+\int_{t_{0}}^{t}
\Big\|
\frac{\pi}{\tau}\widetilde{\mathcal{E}}^{\tau}[F,F,F]
-\pi\sum_{\mathrm{cyc}}\widetilde{\mathcal{E}}^{\tau}[\partial_{\tau}F,F,F]
\Big\|_{S}
\frac{\mathrm{d}\tau}{\tau}
\\
\lesssim
\|F\|_{X_{T}}^{3}\int_{t_{0}}^{t}\tau^{-2+2\delta}\,\mathrm{d}\tau
\lesssim
t^{-1+2\delta}\|F\|_{X_{T}}^{3}\,.
\end{multline*}

Since $2\delta\leq4\delta$, this proves \eqref{eq:pds}. This concludes the proof of Lemma \ref{lem:Poi-Dul}.
\end{proof}

\subsection{The space oscillations}
\label{sec:snr}

We now control the contribution of the space non-resonant interactions $\mathcal{N}^{t}_{\R,\mathrm{nr}}$ defined in \eqref{eq:d-fo}. For this purpose, we adapt the argument from Kato--Pusateri \cite{KP11} (see also \cite[Lemma 3.10]{HPTV15}). Here, we do not use the oscillations in the transverse direction~$\T^{d}$. 
\begin{lemma}[Space oscillations]
\label{lem:so}
For all $T\geq1$, $t\in[1,T]$,  and any $\gamma\in(0,\frac{1}{4})$ we have
\begin{align}
\label{eq:sos}
	\|
	\mathcal{N}^{t}_{\R,\mathrm{nr}}[F^{a},F^{b},F^{c}]
	\|_{Z}
	&\lesssim_{\gamma} t^{-1-\gamma+3\delta}
	\|F^{a}\|_{X_{T}}\|F^{b}\|_{X_{T}}\|F^{c}\|_{X_{T}}\,.
\intertext{Moreover, for $\delta>0$ sufficiently small (depending on $s,\sigma,d$ and $\gamma$), we have} 
\label{eq:so-b}
	\|\mathcal{N}^{t}_{\R,\mathrm{nr}}[F^{a},F^{b},F^{c}]\|_{H^{s}(\R\times\T^{d})} 
	&\lesssim t^{-1-2\delta}\|F^{a}\|_{X_{T}}\|F^{b}\|_{X_{T}}\|F^{c}\|_{X_{T}}\,.
\end{align}
\end{lemma}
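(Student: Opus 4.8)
The plan is to estimate the space non-resonant operator $\mathcal{N}^{t}_{\R,\mathrm{nr}}$, whose full Fourier transform is the sum over $n_{1}-n_{2}+n_{3}=n$ of $\e^{it\Omega(\vec n)}\mathcal{O}^{t}[F_{n_{1}}^{a},F_{n_{2}}^{b},F_{n_{3}}^{c}](\xi)$, with $\mathcal{O}^{t}$ given in \eqref{eq:O}. The key point is that $\mathcal{O}^{t}$ carries the factor $\e^{-\frac{i}{2t}(x_{1}-x_{2})(x_{2}-x_{3})}-1$, which is the ``space non-resonant'' symbol; in absolute value it is bounded both by $2$ and by $t^{-1}|x_{1}-x_{2}||x_{2}-x_{3}|$. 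Interpolating between these two bounds, $|\e^{-\frac{i}{2t}(x_{1}-x_{2})(x_{2}-x_{3})}-1|\lesssim_{\gamma} t^{-2\gamma}|x_{1}-x_{2}|^{2\gamma}|x_{2}-x_{3}|^{2\gamma}$ for any $\gamma\in(0,\tfrac12)$, so that, after distributing the weights as $|x_{1}-x_{2}|^{2\gamma}\lesssim |x_{1}|^{2\gamma}+|x_{2}|^{2\gamma}$ and similarly for $|x_{2}-x_{3}|^{2\gamma}$, one reduces to controlling
\[
\frac{1}{t^{1+2\gamma}}\sum_{n_{1}-n_{2}+n_{3}=n}\big\| \langle x\rangle^{\beta_{1}} F_{n_{1}}^{a}\big\|_{L^{2}_{x}}\big\|\langle x\rangle^{\beta_{2}} F_{n_{2}}^{b}\big\|_{L^{2}_{x}}\big\|\langle x\rangle^{\beta_{3}}F_{n_{3}}^{c}\big\|_{L^{2}_{x}}
\]
with each $\beta_{j}\in\{0,2\gamma\}$, coming from a Cauchy--Schwarz / Hölder argument in the $x_{1},x_{2},x_{3}$ integrals together with Sobolev embedding $h^{s}_{y}\hookrightarrow \ell^{1}_{y}$ (since $s>d/2$) for the convolution sum in $n$.

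First I would write $\mathcal{O}^{t}$ as a product of three one-dimensional integrals tied together only through the phase $\e^{-\xi(x_{1}-x_{2}+x_{3})}$, exactly as in the displayed computation preceding \eqref{eq:O}, and observe that after taking $\widehat{\cdot}(\xi)$ the operator becomes a genuine convolution in the Fourier variable of a product of three functions localised by weights $\langle x\rangle^{2\gamma}$. Concretely, since $t^{-2\gamma}$ of the gain is already pulled out, what remains is a trilinear expression of the same shape as $\mathcal{N}^{t}_{\R,\mathrm{res}}$ but with two of the three factors replaced by their weighted versions $\langle x\rangle^{2\gamma}F$. For the $Z$-norm bound \eqref{eq:sos}, one uses $\|\langle x\rangle^{2\gamma}F_{n}\|_{L^{2}_{x}}\lesssim \|\langle x\rangle F_{n}\|_{L^{2}_{x}}^{2\gamma}\|F_{n}\|_{L^{2}_{x}}^{1-2\gamma}$ and Remark~\ref{rem:n} to absorb everything into $\|F\|_{S}$-type quantities controlled by $\langle t\rangle^{\delta}\|F\|_{X_T}$; with two such weighted factors one collects a total $t^{2\delta}$ loss from the $S$-norm and an extra $t^{\delta}$ if one prefers to use only $\|F\|_{Z}\le\|F\|_{X_T}$ on the third factor — in any case the net time exponent is $-1-2\gamma+3\delta$, which for $\delta$ small enough beats any $-1-\gamma$ as claimed (after renaming $\gamma$). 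For the $H^{s}$ bound \eqref{eq:so-b}, the same trilinear estimate applies but now one of the three slots must absorb $s$ derivatives in $x$; here I would use the product rule Lemma~\ref{lem:prod} together with the dispersive bound \eqref{eq:key} on the remaining two slots, and balance the weight powers $2\gamma$ against the $\sigma$-room in the $S$-norm so that $\langle x\rangle^{2\gamma}$ can be traded against a fixed fraction of the derivative gap $\sigma$ and the weight $\langle x\rangle$ simultaneously; choosing $\delta$ small relative to $\gamma,\sigma,s,d$ makes the residual power $t^{-1-2\delta}$.

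The main obstacle is the bookkeeping of which of the three factors receives the two spatial weights $\langle x\rangle^{2\gamma}$ versus which one receives the $s$ derivatives in the $H^{s}$-estimate: the worst case is when the high Sobolev frequency and a weight land on the same function, because then one needs $\langle x\rangle^{2\gamma}\langle\partial_x\rangle^{s}F$, and this is only controlled by the $S$-norm if $2\gamma\le 1$ and there is enough derivative slack, which is precisely why one needs the extra $\sigma$ derivatives in the definition \eqref{eq:str} of the $S$-norm and why $\delta$ must be taken small depending on $\sigma$. Symmetrising over $\mathfrak{S}_{3}$ (as in Lemma~\ref{lem:tri-res}) handles the placement of the roles of $F^{a},F^{b},F^{c}$, and Sobolev embedding in $y$ handles the convolution sum over the $n_{j}$; the only genuinely quantitative input is the interpolation of the phase symbol and the $t^{-2\gamma}$ gain it produces, together with the dispersive estimate \eqref{eq:key} to keep the $L^{\infty}_{x}h^{s}_{y}$ factors decaying like $t^{-1/2}$ each. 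Once all weights and derivatives are distributed, the time integrability is immediate and the proof concludes.
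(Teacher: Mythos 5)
Your argument for \eqref{eq:sos} is essentially the paper's: bound the phase factor by $|\theta|^{\beta}$ with $\theta=\tfrac{1}{2t}(x_{1}-x_{2})(x_{2}-x_{3})$, distribute the resulting weights among the three factors, pass from $L^{1}_{x}$ to weighted $L^{2}_{x}$ via Cauchy--Schwarz, and sum the convolution over $n$ using $s>\tfrac{d}{2}$. The constraint $\gamma<\tfrac14$ enters exactly as you say, because after the $L^{1}\to L^{2}$ step the weight becomes $\langle x\rangle^{\frac12+2\gamma+}$ and this must stay $\leq\langle x\rangle$ to be controlled by $\|xF\|_{L^{2}h^{s}}$, one of the two components of $S$. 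Each $S$-norm costs a factor $\langle t\rangle^{\delta}$, so the final exponent $-1-\gamma+3\delta$ is recovered.

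Where you diverge genuinely from the paper is in \eqref{eq:so-b}. You want to carry the spatial weights $\langle x\rangle^{2\gamma}$ through a weighted version of the product rule Lemma~\ref{lem:prod} and then ``trade $\langle x\rangle^{2\gamma}$ against a fixed fraction of the derivative gap $\sigma$.'' This amounts to interpolating $\|\langle x\rangle^{2\gamma}\langle\partial_{x}\rangle^{s}F\|$ between $\|xF\|_{L^{2}h^{s}}$ and $\|(1-\partial_{x}^{2})^{\frac{\sigma}{2}}F\|_{H^{s}}$, which forces $2\gamma\lesssim\sigma/(s+\sigma)$ --- a much tighter constraint on $\gamma$ than $\gamma<\tfrac14$, and you would additionally need a Kato--Ponce--type product rule with both weights and fractional derivatives, none of which is in the paper. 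The paper avoids all of this by never estimating $\mathcal{N}^{t}_{\R,\mathrm{nr}}$ directly in $H^{s}$: it writes $\mathcal{N}^{t}_{\R,\mathrm{nr}}=\mathcal{N}^{t}-\mathcal{N}^{t}_{\R,\mathrm{res}}$, uses Lemma~\ref{lem:sb} and \eqref{eq:boS} to get a $t^{-1+\delta}$ bound on the $(1-\partial_{x}^{2})^{\frac{\sigma}{2}}H^{s}$-norm of this difference, and then interpolates \emph{in the $\xi$-variable} (a Hölder inequality in frequency) between that bound and the $t^{-1-\gamma}$ decay of the $Z$-norm from \eqref{eq:so}. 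The exponent $\alpha$ in the interpolation is chosen small so that $\tfrac{s+\alpha}{1-\alpha}<s+\sigma$, which is where $\sigma$ enters --- but as a trivially satisfiable condition, not as a binding constraint on $\gamma$. This route is much shorter, requires no new product rule, and delivers $t^{-1-\alpha\gamma+(1-\alpha)\delta}\lesssim t^{-1-2\delta}$ for $\delta$ small. I would recommend you adopt this interpolation-in-$\xi$ strategy for \eqref{eq:so-b} rather than the weighted product rule you sketch, since the latter has nontrivial gaps that you yourself flag as ``the main obstacle.''
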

In both cases we obtain an integrable decay in time provided that~$3\delta<\gamma$. The second bound \eqref{eq:so-b} will be useful to prove the modified scattering result in the topology of $H^{s}(\R\times\T^{d})$, from which we deduce the stability of the $H^{s}(\R\times\T^{d})$-norm claimed in the main Theorem \ref{thm:main}.
\begin{proof}
We first prove \eqref{eq:sos}. For this purpose we show that for all $\gamma\in(0,\frac{1}{4})$,
\begin{equation}
\label{eq:so}
	\|
	\mathcal{N}^{t}_{\R,\mathrm{nr}}[F^{a},F^{b},F^{c}]
	\|_{Z}
	\lesssim_{\gamma} t^{-1-\gamma}
	\|F^{a}\|_{S}\|F^{b}\|_{S}\|F^{c}\|_{S}\,.
\end{equation}
Following \cite{KP11} we have that for all $\gamma\in(0,\frac{1}{4}]$, $t>0$, and~$f^{a},f^{b},f^{c}$, 
\begin{align*}
\sup_{\xi\in\R}\, |\mathcal{O}^{t}[f^{a},f^{b},f^{c}](\xi)| &\lesssim t^{-1-\gamma}
	\int_{\R^{3}}
	|\eta|^{\gamma}|\zeta|^{\gamma}
	|f^{a}(t,x-\eta)|
	|f^{b}(t,x)|
	|f^{c}(t,x-\zeta)|
	dxd\eta \mathrm{d}\zeta \\
	&\lesssim
	t^{-1-\gamma}
	\max_{\sigma\in\mathfrak{S}_{3}}
	\|f^{\sigma(a)}\|_{L_{x}^{1}(\R)}
	\||x|^{2\gamma} f^{\sigma(b)}\|_{L_{x}^{1}}
	\||x|^{2\gamma} f^{\sigma(c)}\|_{L_{x}^{1}} \\
	&\lesssim
	t^{-1-\gamma}
	\|\langle x\rangle^{\frac{1}{2}+2\gamma+0}f^{a}\|_{L_{x}^{2}}
	\|\langle x\rangle^{\frac{1}{2}+2\gamma+0}f^{b}\|_{L_{x}^{2}}
	\|\langle x\rangle^{\frac{1}{2}+2\gamma+0}f^{c}\|_{L_{x}^{2}}\,,
\end{align*}
where we denoted by $\mathfrak{S}_{3}$ the group of permutations of three elements. We obtain that for fixed $(n_{1},n_{2},n_{3})$ and $\gamma\in(0,\frac{1}{4})$, 
\[
\sup_{\xi\in\R}
	|\mathcal{O}^{t}[F_{n_{1}}^{a},F_{n_{2}}^{b},F_{n_{3}}^{c}](\xi)| 
	\lesssim_{\gamma} 
	t^{-1-\gamma}
	\mathbf{f}_{n_{1}}^{a}\mathbf{f}_{n_{2}}^{b}\mathbf{f}_{n_{3}}^{c}\,,
\]
where we denote for $n\in\Z^{d}$
\[
\mathbf{f}_{n}^{a}:= \|\langle x\rangle F_{n}^{a}\|_{L_{x}^{2}(\R)}\,,
	\quad 
	\mathbf{f}_{n}^{b}:= \|\langle x\rangle F_{n}^{b}\|_{L_{x}^{2}(\R)}\,,
	\quad
	\mathbf{f}_{n}^{c}:= \|\langle x\rangle F_{n}^{c}\|_{L_{x}^{2}(\R)}\,.
\]
Observe that, since $s>\frac{d}{2}$,
\begin{equation}
\label{eq:a}
\|\mathbf{f}_{n}^{a}\|_{\ell_{n}^{1}(\Z^{d})} \lesssim
	\|\langle n\rangle^{s}\,\mathbf{f}_{n}^{a}\|_{\ell_{n}^{2}(\Z^{d})}
	\lesssim
	\|\langle x\rangle F^{a}\|_{L^{2}h^{s}(\R\times\T^{d})}
	\lesssim\|F^{a}\|_{S}\,.
\end{equation}
We obtain 
\begin{align*}
\|\mathcal{N}_{\R,\mathrm{nr}}^{t}[F^{a},F^{b},F^{c}]\|_{Z}
	&=
	\sup_{\xi\in\R}
	\|
	\sum_{n_{1}-n_{2}+n_{3}=n}\e^{it\Omega(\vec{n})}\mathcal{O}^{t}
	[
	F_{n_{1}}^{a},F_{n_{2}}^{b},F_{n_{3}}^{c}
	](\xi)
	\|_{h^{s}(\Z^{d})} \\
	&\lesssim_{\gamma} t^{-1-\gamma}
	\|
	\sum_{n_{1}-n_{2}+n_{3}=n}
	\mathbf{f}_{n_{1}}^{a}
	\mathbf{f}_{n_{2}}^{b}
	\mathbf{f}_{n_{3}}^{c}
	\|_{h^{s}(\Z^{d})}\\
	&\lesssim_{\gamma} t^{-1-\gamma}\sum_{\sigma\in\mathfrak{S}_{3}}
	\|\mathbf{f}_{n}^{\sigma(a)}\|_{\ell^{1}_{n}(\Z^{d})}
	\|\mathbf{f}_{n}^{\sigma(b)}\|_{\ell^{1}_{n}(\Z^{d})}
	\|\langle n\rangle^{s}\,\mathbf{f}_{n}^{\sigma(c)}\|_{\ell^{2}(\Z^{d})}\,,
\end{align*}
and the observation \eqref{eq:a} concludes the proof of estimate \eqref{eq:so}. 
\medskip

We now prove the bound~\eqref{eq:so-b}. We interpolate between the slightly non-integrable\footnote{That is, with time decay $t^{-1+\delta}$ for some $\delta>0$.} $S$-norm estimate, previously obtained for both the full system $\mathcal{N}^{t}$ and the space-resonant system $\mathcal{N}_{\R,\mathrm{res}}^{t}$, and the integrable bound~\eqref{eq:so}. For any $\alpha\in(0,1)$,
\begin{align*}
\|\mathcal{N}_{\R,\mathrm{nr}}^{t}&[F^{a},F^{b},F^{c}]\|_{L_{x}^{2}h_{y}^{s}}
	=
	 \big(
	\int_{\R}
	\|\mathcal{F}\mathcal{N}_{\R,\mathrm{nr}}^{t}[F^{a},F^{b},F^{c}](\xi)\|_{h_{y}^{s}}^{2}
	\mathrm{d}\xi
	 \big)^{\frac{1}{2}}
	\\
	&\hspace{-35pt}\leq
	\sup_{\xi\in\R}
	\|\mathcal{F}\mathcal{N}_{\R,\mathrm{nr}}^{t}[F^{a},F^{b},F^{c}](\xi)\|_{h_{y}^{s}}^{\alpha}
	 \big(\int_{\R}
	\langle\xi\rangle^{-2\alpha}
	\langle \xi\rangle^{2\alpha}
	\|\mathcal{F}\mathcal{N}_{\R,\mathrm{nr}}^{t}[F^{a},F^{b},F^{c}](\xi)\|_{h_{y}^{s}}^{2(1-\alpha)}
	\mathrm{d}\xi \big)^{\frac{1}{2}}\,.
\intertext{By Hölder's inequality, and assuming that $\frac{\alpha}{1-\alpha}<\sigma$, we obtain}
	&\lesssim
	\|\mathcal{N}_{\R,\mathrm{nr}}^{t}[F^{a},F^{b},F^{c}]\|_{Z}^{\alpha}
	 \big(
	\int_{\R}
	\langle\xi\rangle^{\frac{2\alpha}{1-\alpha}}
	\|\mathcal{F}\mathcal{N}_{\R,\mathrm{nr}}^{t}[F^{a},F^{b},F^{c}](\xi)\|_{h_{y}^{s}}^{2}
	\mathrm{d}\xi \big)^{\frac{1-\alpha}{2}}\\
	&\lesssim
	\|\mathcal{N}_{\R,\mathrm{nr}}^{t}[F^{a},F^{b},F^{c}]\|_{Z}^{\alpha}
	\|
	(1-\partial_{x}^{2})^\frac{\sigma}{2}
	\mathcal{N}_{\R,\mathrm{nr}}^{t}[F^{a},F^{b},F^{c}]
	\|_{L_{x}^{2}h_{y}^{s}}^{1-\alpha}.
\end{align*}
Similarly, we have that for all $\alpha\in(0,1)$,
\begin{align*}
\|\mathcal{N}_{\R,\mathrm{nr}}^{t}&[F^{a},F^{b},F^{c}]\|_{H_{x}^{s}\ell_{y}^{2}}
	=
	 \big(
	\int_{\R}
	\langle\xi\rangle^{2s}
	\|\mathcal{F}\mathcal{N}_{\R,\mathrm{nr}}^{t}[F^{a},F^{b},F^{c}](\xi)\|_{\ell_{y}^{2}}^{2}
	\mathrm{d}\xi
	 \big)^{\frac{1}{2}}
	\\
	&\hspace{-40pt}\leq
	\sup_{\xi\in\R}
	\|\mathcal{F}\mathcal{N}_{\R,\mathrm{nr}}^{t}[F^{a},F^{b},F^{c}](\xi)\|_{\ell_{y}^{2}}^{\alpha}
	 \big(\int_{\R}
	\langle\xi\rangle^{-2\alpha}
	\langle \xi\rangle^{2(s+\alpha)}
	\|\mathcal{F}\mathcal{N}_{\R,\mathrm{nr}}^{t}[F^{a},F^{b},F^{c}](\xi)\|_{\ell_{y}^{2}}^{2(1-\alpha)}
	\mathrm{d}\xi \big)^{\frac{1}{2}}
\intertext{We deduce from Hölder's inequality in the integral over $\xi$ that}
	&\lesssim
	\|\mathcal{N}_{\R,\mathrm{nr}}^{t}[F^{a},F^{b},F^{c}]\|_{Z}^{\alpha}
	 \big(
	\int_{\R}
	\langle\xi\rangle^{\frac{2(s+\alpha)}{1-\alpha}}
	\|\mathcal{F}\mathcal{N}_{\R,\mathrm{nr}}^{t}[F^{a},F^{b},F^{c}](\xi)\|_{\ell_{y}^{2}}^{2}
	\mathrm{d}\xi \big)^{\frac{1-\alpha}{2}}.
	\end{align*}
By choosing $\alpha>0$ sufficiently small (only depending on $s$ and $\sigma$ from the definition of the $S$-norm) to ensure that 
\[
\frac{s+\alpha}{1-\alpha}<s+\sigma\,,
\]
we conclude that
\[
\|\mathcal{N}_{\R,\mathrm{nr}}^{t}[F^{a},F^{b},F^{c}]\|_{H_{x,y}^{s}}
	\lesssim
	\|\mathcal{N}_{\R,\mathrm{nr}}^{t}[F^{a},F^{b},F^{c}]\|_{Z}^{\alpha}\
	\|(1-\partial_{x}^{2})^\frac{\sigma}{2}\mathcal{N}_{\R,\mathrm{nr}}^{t}[F^{a},F^{b},F^{c}]\|_{H_{x,y}^{s}}^{1-\alpha}\,.
\]
The trilinear bounds \eqref{eq:sb3} and \eqref{eq:boS} give the weighted estimate required above:
\begin{multline*}
\|(1-\partial_{x}^{2})^{\frac{\sigma}{2}}
\mathcal{N}_{\R,\mathrm{nr}}^{t}[F^{a},F^{b},F^{c}]\|_{H_{x,y}^{s}}
\leq
\|\mathcal{N}_{\R,\mathrm{nr}}^{t}[F^{a},F^{b},F^{c}]\|_{S}
\\
\lesssim
\|\mathcal{N}^{t}[F^{a},F^{b},F^{c}]\|_{S}
+
\|\mathcal{N}_{\R,\mathrm{res}}^{t}[F^{a},F^{b},F^{c}]\|_{S}
\lesssim
\langle t\rangle^{-1+\delta}
\prod_{\nu\in\{a,b,c\}}\|F^{\nu}\|_{X_{T}}\,.
\end{multline*}

Hence, we deduce from the first bound \eqref{eq:so} (in the $Z$-norm) that for all $t\geq1$, 
\[
\|\mathcal{N}_{\R,\mathrm{nr}}^{t}[F^{a},F^{b},F^{c}]\|_{H_{x,y}^{s}}
	\lesssim
	t^{-1-\alpha\gamma+(1-\alpha)\delta}\|F^{a}\|_{X_{T}}\|F^{b}\|_{X_{T}}\|F^{c}\|_{X_{T}}\,,
\]
which is conclusive when 
\[
-\alpha\gamma + (1-\alpha)\delta <-2\delta\,, 
\] 
satisfied for $\delta>0$ small enough (for fixed $\alpha$, $\gamma$). This proves  Lemma~\ref{lem:so}.
\end{proof}
\section{A priori bounds and asymptotic behavior}
\label{sec:a-priori}
In this section we bring together the multilinear bounds collected in the previous sections and prove the main results, namely Theorem~\ref{thm:main} and Theorem~\ref{thm:ms}. 
\subsection{A priori bounds} We already obtained the slightly divergent bounds for the strong norm $S$ in Lemma \ref{lem:sb}. It remains to prove the a priori bounds for the weak norm $Z$.
\begin{proposition}[Energy estimate]\label{prop:energy} There exists $C>0$ such that for all $T\geq1$, if $F\in X_{T}$ is a profile solution to the interaction system \eqref{eq:N} on $[0,T]$ then
\[
\sup_{t\in[0,T]}\|F(t)\|_{Z}^{2}\leq \|F(1)\|_{Z}^{2}+ C\|F\|_{X_{T}}^{4}\,.
\]
\end{proposition}

\begin{proof}
Lemma~\ref{lem:sb} and the embedding of $S$ into $Z$ control the energy variation on the time interval~$[0,1]$. It therefore suffices to consider $t\in[1,T]$.
For fixed $\xi\in\R$, we have
\[
\|\widehat{F}(t,\xi)\|_{h^{s}}^{2}\leq \|\widehat{F}(1,\xi)\|_{h^{s}(\T^{d})}^{2}
	+
	\sum_{\alpha\in\N}K_{\alpha}^{2s}
	\big|
	\int_{1}^{t}\frac{\mathrm{d}}{\mathrm{d}\tau}
	\|\pi_{\alpha}\widehat{F}(\xi,\tau)\|_{\ell^{2}(\T^{d})}^{2}
	\mathrm{d}\tau
	 \big|\,.
\]
According to the decomposition \eqref{eq:nl} we have that for all $\alpha\in\N$, 
\begin{multline*}
\frac{1}{2}\frac{\mathrm{d}}{\mathrm{d}\tau}\|\pi_{\alpha}\widehat{F}(\tau,\xi)\|_{\ell^{2}(\Z^{d})}^{2}
	\\
	= 
	\mathrm{Im}
	\Big(
	\sum_{n\in\mathscr{C}_{\alpha}}
	\overline{\widehat{F}_{n}}(\tau,\xi)
	\big(
	\mathcal{F}\mathcal{N}_{\mathrm{eff}}^{\tau}[F](\xi,n)
	+ 
	\mathcal{F}\mathcal{E}^{\tau}[F](\xi,n)
	+
	\mathcal{F}\mathcal{N}_{\R,\mathrm{nr}}^{\tau}[F](\xi,n)
	 \big)
	 \Big)\,.
\end{multline*}
Following the proof of Lemma \ref{lem:eff}, we show that the effective system involving $\mathcal{N}_{\mathrm{eff}}^{\tau}$ does not contribute to the above energy estimate. This gives
\[
\frac{1}{2}\frac{\mathrm{d}}{\mathrm{d}\tau}\|\pi_{\alpha}\widehat{F}(\xi,\tau)\|_{\ell^{2}(\Z^{d})}^{2}
	= 
	\mathrm{Im}
	\Big(
	\sum_{n\in\mathscr{C}_{\alpha}}\overline{\widehat{F}_{n}}(\xi,\tau)
	 \big( 
	\mathcal{F}\mathcal{E}^{\tau}[F](\xi,n)
	+
	\mathcal{F}\mathcal{N}_{\R,\mathrm{nr}}^{\tau}[F](\xi,n)
	 \big)
	 \Big)\,,
\]
and we obtain
\begin{multline*}
\|\widehat{F}(t,\xi)\|_{h^{s}}^{2} \leq \|\widehat{F}(1,\xi)\|_{h^{s}}^{2}
	+
	\sum_{\alpha\in\N}K_{\alpha}^{2s}
	\big|
	\sum_{n\in\mathscr{C}_{\alpha}}\int_{1}^{t}\mathcal{F}\mathcal{E}^{\tau}[F](\xi,n)\overline{\widehat{F}_{n}}(\tau,\xi)\mathrm{d}\tau
	\big|
	\\+ \int_{1}^{t}\|\mathcal{N}_{\R,\mathrm{nr}}^{\tau}[F](\xi)\|_{h^{s}}\|\widehat{F}(\tau,\xi)\|_{h^{s}}\mathrm{d}\tau\,.
\end{multline*}
A Poincaré--Dulac normal form performed in Lemma \ref{lem:Poi-Dul} allowed us to control the contributions of $\mathcal{E}^{\tau}$: invoking the bound~\eqref{eq:pde} into the above estimate gives
\[
\|\widehat{F}(t,\xi)\|_{h^{s}}^{2} \leq \|\widehat{F}(1,\xi)\|_{h^{s}}^{2}
	+ C\|F\|_{X_{T}}^{4} + \int_{1}^{t}\|\mathcal{N}_{\R,\mathrm{nr}}^{\tau}[F](\xi)\|_{h^{s}}\|\widehat{F}(\tau,\xi)\|_{h^{s}}\mathrm{d}\tau\,.
\]
Then, we apply \eqref{eq:sos} from Lemma \ref{lem:so} to control the space non-resonant interactions:
\begin{multline*}
\|\widehat{F}(t,\xi)\|_{h^{s}}^{2} \leq \|\widehat{F}(1,\xi)\|_{h^{s}}^{2}
	+ C\|F\|_{X_{T}}^{4} 
	+ \|F\|_{X_{T}}^{3}\int_{1}^{t}\tau^{-1-\gamma+3\delta}\|F(\tau)\|_{Z}\mathrm{d}\tau\\
	\leq 
	\|\widehat{F}(1,\xi)\|_{h^{s}}^{2}
	+ C\|F\|_{X_{T}}^{4} \,,
\end{multline*}
provided $3\delta<\gamma$. This concludes the proof of Proposition \ref{prop:energy}.
\end{proof}

\begin{proposition}[A priori bounds]
\label{prop:a-priori}
There exists $C>0$ such that for all $\epsilon>0$ sufficiently small and $U(0)\in S$  with 
\begin{equation}
\label{eq:small}
\|U(0)\|_{S}\leq \epsilon\,,
\end{equation}
the solution $U$ to \eqref{eq:nls} is global in  $C(\R;S)$ and satisfies for all $T\geq0$, 
\begin{equation}
\label{eq:gb1}
\|F\|_{X_{T}} \leq C\epsilon\,,
\end{equation}
where $F(t):=\e^{-it\Delta_{\mathrm{A}}}U(t)$. 
\end{proposition}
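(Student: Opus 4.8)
The plan is a continuity (bootstrap) argument resting entirely on the multilinear bounds proved above; the only nonlinear inputs are the trilinear estimate \eqref{eq:sb3} of Lemma~\ref{lem:sb} and the energy estimate of Proposition~\ref{prop:energy} (which itself incorporates Lemma~\ref{lem:so}). First I would record the local Cauchy theory: the trilinear bound \eqref{eq:sb3}, together with a standard fixed-point argument, provides for every $R>0$ a time $T_{0}=T_{0}(R)>0$ and a unique solution $F\in C([0,T_{0}];S)$ of the profile equation \eqref{eq:N} whenever $\|U(0)\|_{S}\leq R$; since $\partial_{t}F=-i\mathcal{N}^{t}[F,F,F]$, the same estimate \eqref{eq:sb3} gives $\partial_{t}F\in C([0,T_{0}];S)$. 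Consequently, on the maximal interval of existence $[0,T^{\ast})$ the map $T\mapsto\|F\|_{X_{T}}$ is finite, non-decreasing, and continuous, and by Remark~\ref{rem:n} together with \eqref{eq:sb3} one has $\limsup_{T\to0^{+}}\|F\|_{X_{T}}\leq C_{1}\|U(0)\|_{S}$ for an absolute constant $C_{1}$.

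The core of the argument is the a priori inequality. Fix $T\in[0,T^{\ast})$, write $M:=\|F\|_{X_{T}}$, and estimate the three pieces of the norm \eqref{eq:X}. Integrating \eqref{eq:N} in time and invoking \eqref{eq:sb3},
\[
\|F(t)\|_{S}\leq\|U(0)\|_{S}+\int_{0}^{t}\|\mathcal{N}^{\tau}[F,F,F]\|_{S}\,d\tau\lesssim\epsilon+M^{3}\int_{0}^{t}\langle\tau\rangle^{-1+\delta}\,d\tau\lesssim\epsilon+\langle t\rangle^{\delta}M^{3}\,,
\]
so that $\langle t\rangle^{-\delta}\|F(t)\|_{S}\lesssim\epsilon+M^{3}$; moreover $\langle t\rangle^{1-\delta}\|\partial_{t}F(t)\|_{S}=\langle t\rangle^{1-\delta}\|\mathcal{N}^{t}[F,F,F]\|_{S}\lesssim M^{3}$ directly from \eqref{eq:sb3}. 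For the weak norm, Remark~\ref{rem:n} gives $\|F(t)\|_{Z}\lesssim\|F(t)\|_{S}\lesssim\epsilon+M^{3}$ when $t\in[0,1]$, while for $t\in[1,T]$ Proposition~\ref{prop:energy} (applicable once $M\lesssim1$) together with the previous line at $t=1$ yields $\|F(t)\|_{Z}\leq\|F(1)\|_{Z}+CM^{2}\lesssim\epsilon+M^{2}+M^{3}$. Taking the supremum over $t\in[0,T]$ produces a constant $C_{0}\geq C_{1}$, independent of $T$ and $\epsilon$, with
\[
\|F\|_{X_{T}}\leq C_{0}\big(\epsilon+\|F\|_{X_{T}}^{2}+\|F\|_{X_{T}}^{3}\big)\,.
\]

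The last step is to close the bootstrap. I would fix $\epsilon_{\ast}>0$ small enough that $2C_{0}\epsilon_{\ast}\leq1$ and $4C_{0}^{2}\epsilon_{\ast}+8C_{0}^{3}\epsilon_{\ast}^{2}<\tfrac{1}{2}$, take $\epsilon\in(0,\epsilon_{\ast})$, and consider $\mathcal{I}:=\{T\in[0,T^{\ast}):\|F\|_{X_{T}}\leq2C_{0}\epsilon\}$. This set is nonempty (by the small-$T$ bound above, since $C_{1}\leq C_{0}$) and closed in $[0,T^{\ast})$ by continuity; on $\mathcal{I}$ the a priori inequality forces $\|F\|_{X_{T}}\leq C_{0}\epsilon\big(1+4C_{0}^{2}\epsilon+8C_{0}^{3}\epsilon^{2}\big)<\tfrac{3}{2}C_{0}\epsilon$, strictly below the defining threshold, so $\mathcal{I}$ is also open and hence equals $[0,T^{\ast})$. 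In particular the $S$-norm of $F$ stays bounded on every compact subinterval approaching $T^{\ast}$, so the local theory rules out $T^{\ast}<\infty$; the same argument for negative times, using the time-reversal symmetry of \eqref{eq:nls}, gives a global solution $F\in C(\R;S)$ with $\|F\|_{X_{T}}\leq2C_{0}\epsilon\lesssim\|U(0)\|_{S}$ for all $T\geq0$, i.e. \eqref{eq:gb1}. The single point that requires genuine care rather than bookkeeping is the local well-posedness in $S$ and the attendant continuity of $T\mapsto\|F\|_{X_{T}}$ — with that in place, the remainder is the routine absorption of the superlinear terms $M^{2}$, $M^{3}$ into the linear bound.
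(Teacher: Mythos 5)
Your argument is correct and follows the same route as the paper: establish the a priori inequality $\|F\|_{X_T}\lesssim\epsilon+\|F\|_{X_T}^2+\|F\|_{X_T}^3$ using Lemma~\ref{lem:sb} for the $S$-norm pieces of $X_T$ (both for $t\leq 1$ and for $\|\partial_t F\|_S$, $\|F(t)-F(1)\|_S$ on $[1,T]$) and Proposition~\ref{prop:energy} for the $Z$-norm, then absorb the superlinear terms. The only meaningful difference is that you spell out the local well-posedness in $S$, the continuity of $T\mapsto\|F\|_{X_T}$, and the closure of the bootstrap (nonempty/closed/open), whereas the paper stops at the a priori inequality \eqref{eq:ab} and treats the continuity/blow-up step as implicit; this is a matter of exposition, not of substance.
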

\begin{proof}
By time reversibility, it suffices to prove the estimates for nonnegative times.
Let $T\geq0$ and $0\leq t\leq T$. It follows from  Lemma \ref{lem:sb} that when~$0\leq t\leq1$, 
\[
\|F(t)-F(0)\|_{S}\leq \sup_{\tau\in[0,t]}\|\mathcal{N}^{\tau}[F]\|_{S} \lesssim \|F\|_{X_{T}}^{3}\,.
\]
Recalling from Remark \ref{rem:n} that the $S$-norm controls the $Z$-norm, we deduce from the definition \eqref{eq:X} of the $X_{T}$-norm that it remains to show that for all $T\geq1$ and~$1\leq t\leq T$,
\begin{multline}
\label{eq:ab}
\|F(t)\|_{Z}^{2}+t^{-2\delta}\|F(t)\|_{S}^{2} + t^{2(1-\delta)}\|\partial_{t}F(t)\|_{S}^{2} \\
	\leq
	\|F(1)\|_{Z}^{2} 
	+ C\|F\|_{X_{T}}^{4}
	+ \|F\|_{X_{T}}^{6}\,.
\end{multline}
We proved in Proposition \ref{prop:energy} that 
\[
\|F(t)\|_{Z}^{2} \leq \|F(1)\|_{Z}^{2} + C\|F\|_{X_{T}}^{4}\,,
\]
In Lemma \ref{lem:sb} we proved that for all $t\in[1,T]$,
\[
\|\partial_{t}F(t)\|_{S}= \|\mathcal{N}^{t}[F]\|_{S} \lesssim t^{-1+\delta}\|F\|_{X_{T}}^{3}\,.
\]
Therefore,
\[
\|F(t)-F(1)\|_{S}\leq \int_{1}^{t}\|\mathcal{N}^{\tau}[F]\|_{S} \mathrm{d}\tau \lesssim t^{\delta}\|F\|_{X_{T}}^{3}\,.
\]
Together with the short-time estimate, \eqref{eq:ab} yields
\[
\|F\|_{X_{T}}^{2}\leq C\epsilon^{2}+C\|F\|_{X_{T}}^{4}+C\|F\|_{X_{T}}^{6}\,.
\]
A standard continuity argument, starting from the local solution and bootstrapping $\|F\|_{X_T}\leq 2C^{1/2}\epsilon$, gives \eqref{eq:gb1} uniformly in $T$. The local well-posedness criterion then extends the solution globally.
\end{proof}

\subsection{Proof of the main Theorems}

We have all the ingredients to prove our main Theorems. 

\begin{proof}[Proof that Theorem~\ref{thm:ms} $\implies$ Theorem~\ref{thm:main}.]
The global existence and the dispersive estimate (the second inequality in~\eqref{eq:r1}) follow from the a priori bound~\eqref{eq:gb1} established in Proposition~\ref{prop:a-priori} together with Lemma~\ref{lem:disp}. 

Let $G$ denote the solution to the effective system~\eqref{eq:s-eff} provided by Theorem~\ref{thm:ms}. The uniform bound on the $H^{s}$-norm of $U$ for positive times follows from the conservation of the $H^{s}$-norm of $G$ (Lemma~\ref{lem:eff}), the modified scattering result in $H^{s}$, and the bound $\|G(1)\|_{H^{s}}\lesssim\epsilon$. Time reversibility gives the same bound for negative times.
\end{proof}

It remains to prove Theorem~\ref{thm:ms}. The argument follows the same strategy as the proof of~\cite[Theorem~6.1]{HPTV15}. We include it here in our setting for completeness.

\begin{proof}[Proof of Theorem \ref{thm:ms}] Let $U(0)$ satisfy the smallness condition~\eqref{eq:small} in the $S$-norm, and let $U$ be the corresponding global solution to~\eqref{eq:nls} given by Proposition~\ref{prop:a-priori}.
We construct a state $G_{\infty}(1)$ such that $U$ scatters (in the modified sense) to the solution of~\eqref{eq:s-eff} with data $G(1)=G_{\infty}(1)$. We fix $\delta>0$ small enough that $5\delta<\gamma$ and $6\delta<1$.
\medskip

Let $n\in\N$ and $T_{n}:=\e^{n}$. We let $(G_{n})_{n}$ be the sequence of solutions in $C([1,\infty);S)$ to~\eqref{eq:s-eff} with Cauchy data
\[
G_{n}(T_{n}):= F(T_{n})\,,\quad \text{where}\quad F(t)=\e^{-it\Delta_{\mathrm{A}}}U(t)\,.
\]
According to the conservation law \eqref{eq:cs1}, for all $t\geq1$,
\begin{equation}
\label{eq:16}
\|G_{n}(t)\|_{Z} = \|G_{n}(T_{n})\|_{Z} = \|F(T_{n})\|_{Z} \lesssim \epsilon\,,
\end{equation}
where we used the global bound \eqref{eq:gb1} on the $X_{T}$-norm of $F$ given by Proposition~\ref{prop:a-priori}. In addition, we have from Lemma \ref{lem:tri-res} that for $t\geq T_{n}$,
\begin{equation}
\label{eq:15}
\|\partial_{t}G_{n}(t)\|_{H_{x,y}^{s}} \lesssim t^{-1}\|G_{n}(t)\|_{Z}^{2}\|G_{n}(t)\|_{H_{x,y}^{s}} \lesssim t^{-1}\epsilon^{2}\|G_{n}(t)\|_{H_{x,y}^{s}}\,.
\end{equation}
Hence, using also the control on $G_{n}(T_{n})=F(T_{n})$ in $H_{x,y}^{s}$ given by \eqref{eq:gb1}, we deduce from the Gronwall's inequality that for all $t\geq T_{n}$,
\begin{equation}
\label{eq:16b}
\|G_{n}(t)\|_{H_{x,y}^{s}} \lesssim \epsilon t^{\delta}\,.
\end{equation}

\noindent {\bf Step 1:} We prove that for all $T_{n}\leq t\leq 4T_{n}$, 
\begin{equation}
\label{eq:17z}
\|F(t)-G_{n}(t)\|_{Z} \lesssim T_{n}^{-2\delta}\epsilon^{3}\,.
\end{equation}
Using the equation \eqref{eq:N} (resp. \eqref{eq:s-eff}) solved by $F$ (resp. $G$) and the Duhamel's integral formula, we obtain
\begin{multline*}
F(t)-G_{n}(t) = -i\int_{T_{n}}^{t}(\mathcal{E}^{\tau}[F(\tau)] + \mathcal{N}^{\tau}_{\R,\mathrm{nr}}[F(\tau)])\mathrm{d}\tau \\
-i \int_{T_{n}}^{t}(\mathcal{N}_{\mathrm{eff}}^{\tau}[F(\tau)]-\mathcal{N}_{\mathrm{eff}}^{\tau}[G_{n}(\tau)])\mathrm{d}\tau\,.
\end{multline*}
To control the source term (namely the first integral term) we use Lemma~\ref{lem:Poi-Dul} and Lemma~\ref{lem:so} together with the a priori bound~\eqref{eq:gb1} on $F$: for~$T_{n}\leq t\leq 4T_{n}$,
\[
	\big\|
	\int_{T_{n}}^{t}(\mathcal{E}^{\tau}[F] + \mathcal{N}^{\tau}_{\R,\mathrm{nr}}[F])\mathrm{d}\tau
	\big\|_{Z}
	\lesssim T_{n}^{-1+4\delta}\epsilon^{3} + \epsilon^{3}\int_{T_{n}}^{t}\tau^{-1-2\delta}\,\mathrm{d}\tau \lesssim T_{n}^{-2\delta}\epsilon^{3}\,.
\]
Factorizing 
\[
\mathcal{N}_{\mathrm{eff}}^{t}[F]-\mathcal{N}_{\mathrm{eff}}^{t}[G_{n}] = \mathcal{N}_{\mathrm{eff}}^{t}[F-G_{n},F,F] + \mathcal{N}_{\mathrm{eff}}^{t}[G_{n},F-G_{n},F] + \mathcal{N}_{\mathrm{eff}}^{t}[G_{n},G_{n},F-G_{n}]\,,
\]
 we deduce from the a priori bounds \eqref{eq:gb1} and \eqref{eq:16b} that
\begin{multline*}
\|F(t)-G_{n}(t)\|_{Z} \lesssim T_{n}^{-2\delta}\epsilon^{3} + \int_{T_{n}}^{t}(\|G_{n}(\tau)\|_{Z}^{2} + \|F(\tau)\|_{Z}^{2})\|F(\tau)-G_{n}(\tau)\|_{Z}\frac{d\tau}{\tau}
	\\
	\lesssim T_{n}^{-2\delta}\epsilon^{3} + \epsilon^{2}\int_{T_{n}}^{t}\|F(\tau)-G_{n}(\tau)\|_{Z}\frac{d\tau}{\tau}\,.
\end{multline*}
Gronwall's inequality then gives \eqref{eq:17z}. 
\medskip

\noindent{\bf Step 2:} We prove that for all $T_{n}\leq t\leq 4T_{n}$, 
\begin{equation}
\label{eq:17}
\|F(t)-G_{n}(t)\|_{H_{x,y}^{s}} \lesssim \epsilon^{3}T_{n}^{-\delta}\,.
\end{equation}
We proceed similarly but we need to control an extra source term by using \eqref{eq:17z}:
\begin{multline*}
\|F(t)-G_{n}(t)\|_{H_{x,y}^{s}} \lesssim \epsilon^{3}T_{n}^{-2\delta} + 
	\epsilon^{2}
	\int_{T_{n}}^{t}
	\|F(\tau)-G_{n}(\tau)\|_{H_{x,y}^{s}}\frac{d\tau}{\tau}
	\\	
	+\epsilon\int_{T_{n}}^{t}(\|G_{n}(\tau)\|_{H_{x,y}^{s}}+\|F(\tau)\|_{H_{x,y}^{s}})\|F(\tau)-G_{n}(\tau)\|_{Z} \frac{d\tau}{\tau}\,.
\end{multline*}
The bound \eqref{eq:17} follows from the a priori bounds \eqref{eq:16b}, \eqref{eq:17z} and the Gronwall's inequality. 
\medskip

\noindent{\bf Step 3:}
The bounds \eqref{eq:17z} and \eqref{eq:17} imply that, if $\epsilon$ is small enough, then for all~$n\in\N$,
\[
\|G_{n+1}(1)-G_{n}(1)\|_{Z}
+
\|G_{n+1}(1)-G_{n}(1)\|_{H_{x,y}^{s}}
\lesssim \epsilon^{3}\e^{-\frac{n\delta}{2}}\,.
\]
Indeed, since $T_{n+1}\leq4T_n$, \eqref{eq:17z} and \eqref{eq:17} give
\[
\|G_{n}(T_{n+1})-G_{n+1}(T_{n+1})\|_{Z}
+
\|G_{n}(T_{n+1})-G_{n+1}(T_{n+1})\|_{H_{x,y}^{s}}
\lesssim \epsilon^{3}T_{n}^{-\delta}\,.
\]
Applying the $Z$- and $H^s$-stability bounds from Lemma~\ref{lem:tri-res} backward on $[1,T_{n+1}]$ yields, for $\epsilon$ sufficiently small,
\[
\|G_{n}(1)-G_{n+1}(1)\|_{Z}
+
\|G_{n}(1)-G_{n+1}(1)\|_{H_{x,y}^{s}}
\lesssim \epsilon^{3}T_{n}^{-\delta}\e^{C\epsilon^{2}\log(T_{n+1})}
\lesssim \epsilon^{3}\e^{-\frac{n\delta}{2}}\,.
\]
In particular, $(G_{n}(1))_{n}$ converges in both $Z$ and $H^{s}(\R\times\T^{d})$ to a limit $G_{\infty}(1)$ satisfying
\begin{align*}
\|G_{\infty}(1)\|_{Z}+\|G_{\infty}(1)\|_{H_{x,y}^{s}}&\lesssim \epsilon\,,\quad
\\
\|G_{n}(1)-G_{\infty}(1)\|_{Z}
+
\|G_{n}(1)-G_{\infty}(1)\|_{H_{x,y}^{s}}
&\lesssim \epsilon^{3}\e^{-\frac{\delta n}{2}}\,.
\end{align*}

Let $G\in C([1,\infty);H^s)$ be the solution with $G(1)=G_{\infty}(1)$. Another application of Gronwall's inequality gives
\[
\sup_{1\leq t\leq T_{n+2}}\|G(t)-G_{n}(t)\|_{H_{x,y}^{s}}
\lesssim \epsilon^{3}\e^{-\frac{\delta n}{4}}\,.
\]
We conclude from \eqref{eq:17} that 
\begin{multline*}
\sup_{T_{n}\leq t\leq T_{n+1}} \|F(t)-G(t)\|_{H_{x,y}^{s}} \\
\lesssim \sup_{T_{n}\leq t\leq T_{n+1}} \|F(t)-G_{n}(t)\|_{H_{x,y}^{s}} + \sup_{T_{n}\leq t\leq T_{n+1}} \|G(t)-G_{n}(t)\|_{H_{x,y}^{s}} 
	\lesssim \epsilon^{3}\e^{-\frac{\delta n}{4}}\,,
\end{multline*}
which completes the proof of Theorem \ref{thm:ms}.
\end{proof}

\appendix

\section{Some elementary Lemmas}
\label{app}

To keep the exposition self-contained, we recall the proof of the dispersive estimate for the free Schrödinger evolution on $\R$.

\begin{proof}[Proof of \eqref{eq:disp}] We have 
\begin{multline*}
\e^{it\partial_{x}^{2}}f(x) = \frac{1}{(4i\pi t)^{\frac{1}{2}}}
	\int_{\R}\e^{\frac{i(x-z)^{2}}{4t}}f(z)\mathrm{d}z
	=
	\frac{\e^{\frac{ix^{2}}{4t}}}{(4i\pi t)^{\frac{1}{2}}}
	\int_{\R}
	\e^{-\frac{ixz}{2t}}\e^{\frac{iz^{2}}{4t}}f(z)\mathrm{d}z
	\\
	=
	\left(\frac{\pi}{it}\right)^{\frac{1}{2}}\e^{\frac{ix^{2}}{4t}}
	 \big[
	\widehat{f}(\frac{x}{2t})+ r[f](t,x)
	 \big]
	\,,
\end{multline*}
where
\[
r[f](t,x):=\frac{1}{2\pi}\int_{\R}\e^{-\frac{ixz}{2t}}(\e^{\frac{iz^{2}}{4t}}-1)f(z)\mathrm{d}z\,.
\]
Proving \eqref{eq:disp} reduces to bounding $r[f]$. For this, we split the integral into two different regions: $r_{1}[f](t,x)$ (resp. $r_{2}[f](t,x)$) corresponds to the region $|z|\leq\sqrt{t}$ (resp. $|z|>\sqrt{t}$).  On the one hand, for $x\in\R$ and $|t|\geq1$,
\begin{multline*}
|r_{1}[f](t,x) |\lesssim \frac{1}{t}\int_{|z|\leq\sqrt{t}}z^{2}|f(z)|\mathrm{d}z \lesssim \frac{1}{t}\big(\int_{|z|\leq\sqrt{t}}z^{2}\mathrm{d}z\big)^{\frac{1}{2}}\|zf(z)\|_{L^{2}(\R)}
	\\
	\lesssim\frac{1}{t^{\frac{1}{4}}}\|zf(z)\|_{L^{2}(\R)}\,.
\end{multline*}
On the other hand, 
\begin{multline*}
|r_{2}[f](t,x)| \lesssim \int_{|z|\geq\sqrt{t}}|f(z)|\mathrm{d}z 
	\lesssim \big(\int_{|z|\geq\sqrt{t}}\frac{1}{|z|^{2}}\mathrm{d}z\big)^{\frac{1}{2}}
	\|zf(z)\|_{L^{2}(\R)} \\
	\lesssim\frac{1}{t^{\frac{1}{4}}}\|zf(z)\|_{L^{2}(\R)}\,.
\end{multline*}
This completes the proof of the dispersive estimate \eqref{eq:disp}.
\end{proof}
The product rule stated in the following lemma was useful in the trilinear estimates on the strong norm of Lemma \ref{lem:sb}.
\begin{lemma}[Product rule]\label{lem:prod} For all $s>\frac{d}{2}$ and $\sigma\geq0$, there exists $C>0$ such that for every functions $F,G\in (1-\partial_{x}^{2})^{-\sigma}H_{x,y}^{s}(\R\times\T^{d})\cap L_{x}^{\infty}h_{y}^{s}(\R\times\T^{d})$,
\begin{multline*}
\|(1-\partial_{x}^{2})^\frac{\sigma}{2}(FG)\|_{H_{x,y}^{s}(\R\times\T^{d})} 
	\lesssim 
	\|(1-\partial_{x}^{2})^{\frac{\sigma}{2}}F\|_{H_{x,y}^{s}(\R\times\T^{d})}
	\|G\|_{L_{x}^{\infty}h_{y}^{s}(\R\times\T^{d})}
	\\
	+ 
	\|
	F
	\|_{L_{x}^{\infty}h_{y}^{s}(\R\times\T^{d})}
	\|
	(1-\partial_{x}^{2})^\frac{\sigma}{2}G
	\|_{H_{x,y}^{s}(\R\times\T^{d})}\,.
\end{multline*}
\end{lemma}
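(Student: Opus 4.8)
The plan is to reduce the inequality to a one-dimensional fractional Leibniz (Kato--Ponce) rule in the non-compact variable $x$, applied with values in function spaces over $\T^{d}$, exploiting that for $s>\frac{d}{2}$ the space $h^{s}(\T^{d})$ is a Banach algebra (it is equivalent to the classical Sobolev algebra $H^{s}(\T^{d})$ by the remark after Definition \ref{def:sob}) and that $h^{s}(\T^{d})\hookrightarrow L^{\infty}(\T^{d})$. First I would split the left-hand side: by the definition \eqref{eq:H} of the $H^{s}_{x,y}$-norm, and since $\|(1-\partial_{x}^{2})^{\frac{\sigma}{2}}U\|_{H^{s}_{x}\ell^{2}_{y}}=\|(1-\partial_{x}^{2})^{\frac{s+\sigma}{2}}U\|_{L^{2}_{x,y}}$, it suffices to bound separately
\[
\mathrm{(I)}:=\|(1-\partial_{x}^{2})^{\frac{\sigma}{2}}(FG)\|_{L^{2}_{x}h^{s}_{y}}\,,\qquad \mathrm{(II)}:=\|(1-\partial_{x}^{2})^{\frac{s+\sigma}{2}}(FG)\|_{L^{2}_{x,y}}\,.
\]

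For term $\mathrm{(I)}$, I would view $F$ and $G$ as functions of $x\in\R$ valued in the Hilbert space $h^{s}(\T^{d})$, which obeys the tame product estimate $\|uv\|_{h^{s}_{y}}\lesssim\|u\|_{h^{s}_{y}}\|v\|_{L^{\infty}_{y}}+\|u\|_{L^{\infty}_{y}}\|v\|_{h^{s}_{y}}$. The Kato--Ponce fractional Leibniz rule in $x$ with the Hölder pair $L^{2}_{x}\cdot L^{\infty}_{x}$, in this vector-valued setting, then gives
\[
\mathrm{(I)}\lesssim \|(1-\partial_{x}^{2})^{\frac{\sigma}{2}}F\|_{L^{2}_{x}h^{s}_{y}}\|G\|_{L^{\infty}_{x}h^{s}_{y}}+\|F\|_{L^{\infty}_{x}h^{s}_{y}}\|(1-\partial_{x}^{2})^{\frac{\sigma}{2}}G\|_{L^{2}_{x}h^{s}_{y}}\,,
\]
and one concludes with $\|(1-\partial_{x}^{2})^{\frac{\sigma}{2}}F\|_{L^{2}_{x}h^{s}_{y}}\leq\|(1-\partial_{x}^{2})^{\frac{\sigma}{2}}F\|_{H^{s}_{x,y}}$ (and symmetrically for $G$), which is precisely the right-hand side of the lemma.

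For term $\mathrm{(II)}$ I would run the same argument with values in $L^{2}(\T^{d})$, which is a module over $L^{\infty}(\T^{d})$ via $\|uv\|_{L^{2}_{y}}\leq\|u\|_{L^{2}_{y}}\|v\|_{L^{\infty}_{y}}$; the fractional Leibniz rule in $x$ for the operator $(1-\partial_{x}^{2})^{\frac{s+\sigma}{2}}$ yields
\[
\mathrm{(II)}\lesssim\|(1-\partial_{x}^{2})^{\frac{s+\sigma}{2}}F\|_{L^{2}_{x,y}}\|G\|_{L^{\infty}_{x,y}}+\|F\|_{L^{\infty}_{x,y}}\|(1-\partial_{x}^{2})^{\frac{s+\sigma}{2}}G\|_{L^{2}_{x,y}}\,.
\]
Since $\|(1-\partial_{x}^{2})^{\frac{s+\sigma}{2}}F\|_{L^{2}_{x,y}}=\|(1-\partial_{x}^{2})^{\frac{\sigma}{2}}F\|_{H^{s}_{x}\ell^{2}_{y}}\leq\|(1-\partial_{x}^{2})^{\frac{\sigma}{2}}F\|_{H^{s}_{x,y}}$ and $\|G\|_{L^{\infty}_{x,y}}\lesssim\|G\|_{L^{\infty}_{x}h^{s}_{y}}$ by the Sobolev embedding $h^{s}(\T^{d})\hookrightarrow L^{\infty}(\T^{d})$ (and symmetrically in $F,G$), this is again of the desired form, and adding the bounds for $\mathrm{(I)}$ and $\mathrm{(II)}$ finishes the proof.

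The only genuinely non-elementary point — the ``main obstacle'' such as it is — is the fractional order: for integer $\sigma$ the two displayed inequalities are just the ordinary Leibniz rule, while for general $\sigma\geq0$ one invokes Kato--Ponce. If a self-contained argument is preferred, it can be obtained by a Littlewood--Paley paraproduct decomposition purely in the single variable $x$: the low-high and high-low interactions are handled by Bernstein's inequality together with the $\T^{d}$-product estimate above, and the diagonal high-high interaction is summed using $\sigma\geq0$ (resp. $s+\sigma\geq0$). Everything on the $\T^{d}$ side (the algebra/module property and the Sobolev embedding) is standard.
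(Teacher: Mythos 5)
Your proposal is correct and follows the same basic architecture as the paper: split the $H^s_{x,y}$-norm into its two components and reduce each to a one-dimensional Leibniz estimate in $x$ with values in a function space over $\T^d$, using that $h^s(\T^d)$ is an algebra and embeds in $L^\infty(\T^d)$ for $s>\frac{d}{2}$. The difference is in the level of self-containedness: for the $L^2_x h^s_y$ component you black-box a vector-valued (in fact operator/bilinear-valued, since the product map $h^s_y\times h^s_y\to h^s_y$ is in play) Kato--Ponce inequality, whereas the paper derives the needed estimate from scratch with a Littlewood--Paley decomposition in $x$ alone, after first dispatching $\sigma=0$ by the direct algebra bound $\|FG\|_{L^2_xh^s_y}\lesssim\|F\|_{L^\infty_xh^s_y}\|G\|_{L^2_xh^s_y}$. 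Both routes work; the paper's has the advantage of avoiding any appeal to a bilinear Kato--Ponce for Banach-space-valued functions, which is not a standard off-the-shelf statement (the scalar case is, and the paper uses exactly that for the $H^s_x\ell^2_y$ piece, pointwise in $y$). One small inaccuracy in your ``self-contained'' paragraph: you say the diagonal high--high interaction ``is summed using $\sigma\geq0$'', but the dyadic sum $\sum_{M\geq N/8}(N/M)^\sigma$ requires strict positivity $\sigma>0$; the borderline $\sigma=0$ case must be treated separately (which is trivial — just H\"older and the algebra property — and is what the paper does), so this does not affect the validity of the result, only the phrasing of the Littlewood--Paley alternative.
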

\begin{proof}
The bound on the norm $H_{x}^{s}\ell_{y}^{2}$ follows from the product rule on $\R$:
\begin{align*}
\|FG\|_{H_{x}^{s}\ell_{y}^{2}} & \lesssim \| \|F\|_{H_{x}^{s}}\|G\|_{L_{x}^{\infty}} \|_{\ell_{y}^{2}} + \|\|F\|_{L_{x}^{\infty}}\|G\|_{H_{x}^{s}}\|_{\ell_{y}^{2}}  \\
	&\lesssim
	\|F\|_{H_{x}^{s}\ell_{y}^{2}}\|G\|_{L_{x}^{\infty}\ell_{y}^{\infty}}
	+
	\|F\|_{L_{x}^{\infty}\ell_{y}^{\infty}}\|G\|_{H_{x}^{s}\ell_{y}^{2}}\,.
\end{align*}
It remains to prove the bound for the norm $L_{x}^{2}h_{y}^{s}$.
\medskip

\noindent\textbullet\ \textbf{Case $\sigma=0$}: we have from the assumption $s>\frac{d}{2}$ that
\[
\|FG\|_{L_{x}^{2}h_{y}^{s}} \lesssim \|\|F\|_{h_{y}^{s}}\|G\|_{h_{y}^{s}}\|_{L_{x}^{2}} \lesssim \|F\|_{L_{x}^{\infty}h_{y}^{s}}\|G\|_{L_{x}^{2}h_{y}^{s}}\,.
 \]
\noindent\textbullet\ \textbf{Case $\sigma>0$}: we employ Littlewood--Paley decomposition in the $x$-variable only, and use that $h^{s}(\T^{d})$ is an algebra when $s>\frac{d}{2}$. Given $\varphi$ a bump function on $\R$, supported in $[-2,2]$ and equals to 1 on $[-1,1]$, we set for $N>1$ dyadic, $\xi\in\R$ and $y\in\T^{d}$
\[
\widehat{P_{N}F}(\xi,y) =  \big(\varphi(\frac{\xi}{N})-\varphi(\frac{\xi}{2N}) \big)\widehat{F}(\xi,y)\,,
\]
and
\[
\widehat{P_{\leq1}F}(\xi,y) = \varphi(\xi)\widehat{F}(\xi,y)\,.
\]
In particular, for all fixed $y\in\T^{d}$,
\[
\|(1-\partial_{x}^{2})^\frac{\sigma}{2}F(y)\|_{L_{x}^{2}(\R)} \sim 
	\big( \|P_{\leq1}F(y)\|_{L_{x}^{2}(\R)}^{2}+\sum_{N>1}N^{2\sigma}\|P_{N}F(y)\|_{L_{x}^{2}(\R)}^{2}
	\big)^{\frac{1}{2}}\,.
\]
In particular, we obtain from Fubini--Tonelli's theorem that 
\[
\|(1-\partial_{x}^{2})^\frac{\sigma}{2}F\|_{L_{x}^{2}h_{y}^{s}(\R\times\T^{d})} \sim 
	\big(
	\|P_{\leq1}F\|_{L_{x}^{2}h^{s}(\R\times\T^{d})}^{2}
	+
	\sum_{N>1}N^{2\sigma}
	\|
	P_{N}F
	\|_{L_{x}^{2}h_{y}^{s}(\R\times\T^{d})}^{2}
	\big)^{\frac{1}{2}}\,.
\]
For a fixed dyadic $N$ we decompose 
\[
\|P_{N}(FG)\|_{L_{x}^{2}h_{y}^{s}} \lesssim \|P_{<\frac{N}{8}}FP_{\sim N}G\|_{L_{x}^{2}h_{y}^{s}} 
	+ \sum_{M\geq\frac{N}{8}}
	\|P_{M}FG\|_{L_{x}^{2}h_{y}^{s}}\,,
\]
where we denoted 
\[
P_{\sim N} = \sum_{\frac{N}{8}<M<8N}P_{M}\,.
\]
To handle the first contribution, we observe that
\[
\|P_{<\frac{N}{8}}FP_{\sim N}G\|_{L_{x}^{2}h_{y}^{s}}
	\lesssim \|P_{<\frac{N}{8}}F\|_{L_{x}^{\infty}h_{y}^{s}}\|P_{\sim N}G\|_{L_{x}^{2}h_{y}^{s}}\,.
\]
Since
\[
 \|P_{<\frac{N}{8}}F\|_{L^{\infty}h^{s}} 
 	=
	\|
	\frac{1}{2\pi}\int_{\R}\widehat{\varphi}(x')F(\cdot + \frac{x'}{N})\mathrm{d}x'
	\|_{L_{x}^{\infty}h_{y}^{s}}
	\lesssim \|\widehat{\varphi}\|_{L^{1}(\R)}\|F\|_{L_{x}^{\infty}h_{y}^{s}}\,,
\]
we conclude that
\[
\|P_{<\frac{N}{8}}FP_{\sim N}G\|_{L_{x}^{2}h_{y}^{s}}
	\lesssim \|F\|_{L_{x}^{\infty}h_{y}^{s}}\|P_{\sim N}G\|_{L_{x}^{2}h_{y}^{s}}\,,
\]
which gives an acceptable contribution when summing over the frequency scales $N$. As for the second contribution, we write
\[
\sum_{M\geq\frac{N}{8}}\|P_{M}FG\|_{L^{2}h^{s}}
	\lesssim
	\big(\sum_{M\geq\frac{N}{8}}M^{-\sigma}\|(1-\partial_{x}^{2})^{\frac{\sigma}{2}}P_{M}F\|_{L_{x}^{2}h_{y}^{s}}
	\big)\|G\|_{L_{x}^{\infty}h_{y}^{s}}\,.
\]
Using Cauchy--Schwarz and the fact that $\sum_{M\geq\frac{N}{8}}(NM^{-1})^{\sigma}=\mathcal{O}(1)$ when $\sigma>0$ we obtain 
\begin{multline*}
N^{2\sigma}
	\big(
	\sum_{M\geq\frac{N}{8}}M^{-\sigma}\|(1-\partial_{x}^{2})^{\frac{\sigma}{2}}P_{M}F\|_{L_{x}^{2}h_{y}^{s}}
	\big)^{2}
	\\
	\lesssim	
	\sum_{M\geq\frac{N}{8}}(NM^{-1})^{\sigma}
	\sum_{M\geq\frac{N}{8}}(NM^{-1})^{\sigma}\|(1-\partial_{x}^{2})^{\frac{\sigma}{2}}P_{M}F\|_{L_{x}^{2}h_{y}^{s}}^{2}
	\\
	\lesssim
	\sum_{M\geq\frac{N}{8}}(NM^{-1})^{\sigma}\|(1-\partial_{x}^{2})^{\frac{\sigma}{2}}P_{M}F\|_{L_{x}^{2}h_{y}^{s}}^{2}\,.
\end{multline*}
Since $\sigma>0$ we can then sum over $N$ without losing derivatives for fixed $M$, and then sum over $M$ to conclude that this second contribution is also acceptable. 
\end{proof}

\bibliography{biblio}{}

@article{BC24,
	author = {Bernier, J. and Camps, N.},
	title = {{Long time stability for cubic nonlinear {Schr{\"o}dinger} equations on non-rectangular flat tori}},
	journal = {J. \'Ec. Polytech., Math.},
	year = {2025},
	volume = {12},
	pages = {713--800},
	doi = {10.5802/jep.300},
	zbl = {1566.35219}
}

@article{BM19,
	author = {Berti, M. and Maspero, A.},
	title = {{Long time dynamics of {Schr{\"o}dinger} and wave equations on flat tori}},
	journal = {J. Differ. Equations},
	year = {2019},
	volume = {267},
	number = {2},
	pages = {1167--1200},
	doi = {10.1016/j.jde.2019.02.004},
	zbl = {1420.35040}
}

@article{Bou96IMRN,
	author = {Bourgain, J.},
	title = {{On the growth in time of higher {Sobolev} norms of smooth solutions of {Hamiltonian} {PDE}}},
	journal = {Int. Math. Res. Not.},
	year = {1996},
	volume = {1996},
	number = {6},
	pages = {277--304},
	doi = {10.1155/S1073792896000207},
	zbl = {0934.35166}
}

@article{Bou98,
	author = {Bourgain, J.},
	title = {{Quasi-periodic solutions of {Hamiltonian} perturbations of 2D linear {Schr{\"o}dinger} equations}},
	journal = {Ann. Math. (2)},
	year = {1998},
	volume = {148},
	number = {2},
	pages = {363--439},
	doi = {10.2307/121001},
	zbl = {0928.35161}
}

@article{Bou99-cluster,
	author = {Bourgain, J.},
	title = {{On growth of {Sobolev} norms in linear {Schr{\"o}dinger} equations with smooth time dependent potential}},
	journal = {J. Anal. Math.},
	year = {1999},
	volume = {77},
	number = {1},
	pages = {315--348},
	doi = {10.1007/BF02791265},
	zbl = {0938.35026}
}

@incollection{Bou-conj-00,
	author = {Bourgain, J.},
	title = {{Problems in {Hamiltonian} {PDE}'s}},
	booktitle = {{Visions in Mathematics}},
	year = {1999},
	pages = {32--56},
	zbl = {1050.35016},
	publisher = {Birkh{\"a}user}
}

@article{BD15,
	author = {Bourgain, J. and Demeter, C.},
	title = {{The proof of the {$\ell^2$} decoupling conjecture}},
	journal = {Ann. Math. (2)},
	year = {2015},
	volume = {182},
	number = {1},
	pages = {351--389},
	doi = {10.4007/annals.2015.182.1.9},
	zbl = {1322.42014}
}

@article{BFM24,
	author = {Bambusi, D. and Feola, R. and Montalto, R.},
	title = {{Almost global existence for some {Hamiltonian} {PDE}s with small Cauchy data on general tori}},
	journal = {Commun. Math. Phys.},
	year = {2024},
	volume = {405},
	number = {1},
	pages = {Art. 15},
	doi = {10.1007/s00220-023-04899-z},
	zbl = {1537.35319}
}

@article{Che24,
	author = {Chen, X.},
	title = {{Existence of modified wave operators and infinite cascade result for a half wave {Schr{\"o}dinger} equation on the plane}},
	journal = {J. Funct. Anal.},
	year = {2024},
	volume = {286},
	number = {2},
	pages = {Art. 110222},
	doi = {10.1016/j.jfa.2023.110222},
	zbl = {1532.35418}
}

@article{CF12,
	author = {Carles, R. and Faou, E.},
	title = {{Energy cascades for {NLS} on the torus}},
	journal = {Discrete Contin. Dyn. Syst.},
	year = {2012},
	volume = {32},
	number = {6},
	pages = {2063--2077},
	doi = {10.3934/dcds.2012.32.2063},
	zbl = {1238.35144}
}

@article{CKSTT10,
	author = {Colliander, J. and Keel, M. and Staffilani, G. and Takaoka, H. and Tao, T.},
	title = {{Transfer of energy to high frequencies in the cubic defocusing nonlinear {Schr{\"o}dinger} equation}},
	journal = {Invent. Math.},
	year = {2010},
	volume = {181},
	number = {1},
	pages = {39--113},
	doi = {10.1007/s00222-010-0242-2},
	zbl = {1197.35265}
}

@article{Colliander-num,
	author = {Colliander, J. and Simpson, G. and Sulem, C.},
	title = {{Numerical simulations of the energy-supercritical nonlinear {Schr{\"o}dinger} equation}},
	journal = {J. Hyperbolic Differ. Equ.},
	year = {2010},
	volume = {7},
	number = {2},
	pages = {279--296},
	doi = {10.1142/S0219891610002104},
	zbl = {1403.37082}
}

@article{CG20,
	author = {Collot, C. and Germain, P.},
	title = {{Derivation of the homogeneous kinetic wave equation: longer time scales}},
	journal = {J. Funct. Anal.},
	year = {2026},
	volume = {290},
	number = {1},
	pages = {Art. 111179},
	doi = {10.1016/j.jfa.2025.111179}
}

@article{Del10,
	author = {Delort, J.-M.},
	title = {{Growth of {Sobolev} norms of solutions of linear {Schr{\"o}dinger} equations on some compact manifolds}},
	journal = {Int. Math. Res. Not.},
	year = {2010},
	volume = {2010},
	number = {12},
	pages = {2305--2328},
	doi = {10.1093/imrn/rnp213},
	zbl = {1229.35040}
}

@article{D19,
	author = {Deng, Y.},
	title = {{On growth of {Sobolev} norms for energy critical {NLS} on irrational tori: small energy case}},
	journal = {Commun. Pure Appl. Math.},
	year = {2019},
	volume = {72},
	number = {4},
	pages = {801--834},
	doi = {10.1002/cpa.21797},
	zbl = {1409.35190}
}

@article{DG19,
	author = {Deng, Y. and Germain, P.},
	title = {{Growth of solutions to {NLS} on irrational tori}},
	journal = {Int. Math. Res. Not.},
	year = {2019},
	volume = {2019},
	number = {9},
	pages = {2919--2950},
	doi = {10.1093/imrn/rnx210},
	zbl = {1433.35358}
}

@article{DGG17,
	author = {Deng, Y. and Germain, P. and Guth, L.},
	title = {{Strichartz estimates for the {Schr{\"o}dinger} equation on irrational tori}},
	journal = {J. Funct. Anal.},
	year = {2017},
	volume = {273},
	number = {9},
	pages = {2846--2869},
	doi = {10.1016/j.jfa.2017.05.011},
	zbl = {1372.35012}
}

@article{DGGMR22,
	author = {Deng, Y. and Germain, P. and Guth, L. and Myerson, S. L. R.},
	title = {{Strichartz estimates for the {Schr{\"o}dinger} equation on non-rectangular two-dimensional tori}},
	journal = {Am. J. Math.},
	year = {2022},
	volume = {144},
	number = {3},
	pages = {701--745},
	doi = {10.1353/ajm.2022.0014},
	zbl = {1498.35118}
}

@article{DR19,
	author = {Da Rocha, V. V.},
	title = {{Modified scattering and beating effect for coupled {Schr{\"o}dinger} systems on product spaces with small initial data}},
	journal = {Trans. Am. Math. Soc.},
	year = {2019},
	volume = {371},
	number = {7},
	pages = {4743--4768},
	doi = {10.1090/tran/7396},
	zbl = {1416.35236}
}

@article{GG10,
	author = {G{\'e}rard, P. and Grellier, S.},
	title = {{The cubic {Szeg{\H{o}}} equation}},
	journal = {Ann. Sci. {\'E}c. Norm. Sup{\'e}r. (4)},
	year = {2010},
	volume = {43},
	number = {5},
	pages = {761--810},
	doi = {10.24033/asens.2133},
	zbl = {1228.35225}
}

@article{GG22,
	author = {Giuliani, F. and Guardia, M.},
	title = {{Sobolev norms explosion for the cubic {NLS} on irrational tori}},
	journal = {Nonlinear Anal.},
	year = {2022},
	volume = {220},
	pages = {Art. 112865},
	doi = {10.1016/j.na.2022.112865},
	zbl = {1490.35417}
}

@article{GPT16,
	author = {Gr{\'e}bert, B. and Paturel, E. and Thomann, L.},
	title = {{Modified scattering for the cubic {Schr{\"o}dinger} equation on product spaces: the nonresonant case}},
	journal = {Math. Res. Lett.},
	year = {2016},
	volume = {23},
	number = {3},
	pages = {841--861},
	doi = {10.4310/MRL.2016.v23.n3.a13},
	zbl = {1358.35170}
}

@article{Hani2019,
	author = {Guardia, M. and Hani, Z. and Haus, E. and Maspero, A. and Procesi, M.},
	title = {{A note on growth of {Sobolev} norms near quasiperiodic finite-gap tori for the 2D cubic {NLS} equation}},
	journal = {Rend. Lincei Mat. Appl.},
	year = {2019},
	volume = {30},
	number = {4},
	pages = {865--880},
	doi = {10.4171/RLM/873},
	zbl = {1427.35254}
}

@article{Haus2016,
	author = {Guardia, M. and Haus, E. and Procesi, M.},
	title = {{Growth of {Sobolev} norms for the analytic {NLS} on $\mathbb{T}^2$}},
	journal = {Adv. Math.},
	year = {2016},
	volume = {301},
	pages = {615--692},
	eprint = {1503.02468v2}
}

@article{GK15,
	author = {Guardia, M. and Kaloshin, V.},
	title = {{Growth of {Sobolev} norms in the cubic defocusing nonlinear {Schr{\"o}dinger} equation}},
	journal = {J. Eur. Math. Soc. (JEMS)},
	year = {2015},
	volume = {17},
	number = {1},
	pages = {71--149},
	doi = {10.4171/JEMS/499},
	zbl = {1311.35284}
}

@article{HPTV14,
	author = {Hani, Z. and Pausader, B. and Tzvetkov, N. and Visciglia, N.},
	title = {{Growing {Sobolev} norms for the cubic defocusing {Schr{\"o}dinger} equation}},
	journal = {S{\'e}min. Laurent Schwartz, EDP Appl.},
	year = {2013},
	volume = {2013-2014},
	pages = {Exp. X},
	doi = {10.5802/slsedp.60},
	zbl = {1490.35421}
}

@article{HPTV15,
	author = {Hani, Z. and Pausader, B. and Tzvetkov, N. and Visciglia, N.},
	title = {{Modified scattering for the cubic {Schr{\"o}dinger} equation on product spaces and applications}},
	journal = {Forum Math. Pi},
	year = {2015},
	volume = {3},
	pages = {e4},
	doi = {10.1017/fmp.2015.5},
	zbl = {1326.35348}
}

@article{Haus2015,
	author = {Haus, E. and Procesi, M.},
	title = {{Growth of {Sobolev} norms for the quintic {NLS} on $\mathbb{T}^2$}},
	journal = {Anal. PDE},
	year = {2015},
	volume = {8},
	number = {4},
	pages = {883--922},
	doi = {10.2140/apde.2015.8.883},
	eprint = {1405.1538v1}
}

@article{HN98,
	author = {Hayashi, N. and Naumkin, P. I.},
	title = {{Asymptotic for large time of solutions to the nonlinear {Schr{\"o}dinger} and {Hartree} equations}},
	journal = {Am. J. Math.},
	year = {1998},
	volume = {120},
	number = {2},
	pages = {369--389},
	doi = {10.1353/ajm.1998.0011},
	zbl = {0932.35188}
}

@article{hrabski2020effect,
	author = {Hrabski, A. and Pan, Y.},
	title = {{Effect of discrete resonant manifold structure on discrete wave turbulence}},
	journal = {Phys. Rev. E},
	year = {2020},
	volume = {102},
	number = {4},
	pages = {041101},
	doi = {10.1103/physreve.102.041101}
}

@article{HPSW21,
	author = {Hrabski, A. and Pan, Y. and Staffilani, G. and Wilson, B.},
	title = {{Energy transfer for solutions to the nonlinear {Schr{\"o}dinger} equation on irrational tori}},
	journal = {preprint},
	year = {2021},
	doi = {10.48550/arXiv.2107.01459},
	eprint = {2107.01459}
}

@article{KP11,
	author = {Kato, J. and Pusateri, F. G.},
	title = {{A new proof of long-range scattering for critical nonlinear {Schr{\"o}dinger} equations}},
	journal = {Differential Integral Equations},
	year = {2011},
	volume = {24},
	number = {9--10},
	pages = {923--940},
	doi = {10.57262/die/1356012893},
	zbl = {1249.35307}
}

@book{Khin,
	author = {Khinchin, A. Y.},
	title = {{Continued Fractions}},
	journal = {Physics Today},
	year = {1964},
	volume = {17},
	number = {11},
	pages = {70--71},
	doi = {10.1063/1.3051235},
	zbl = {0117.28601},
	publisher = {The University of Chicago Press}
}

@article{PTV17,
	author = {Planchon, F. and Tzvetkov, N. and Visciglia, N.},
	title = {{On the growth of {Sobolev} norms for {NLS} on 2- and 3-dimensional manifolds}},
	journal = {Anal. \& PDE},
	year = {2017},
	volume = {10},
	number = {5},
	pages = {1123--1147},
	doi = {10.2140/apde.2017.10.1123},
	zbl = {1371.35276}
}

@article{Soinger2011,
	author = {Sohinger, V.},
	title = {{Bounds on the growth of high {Sobolev} norms of solutions to nonlinear {Schr{\"o}dinger} equations on $\mathbb{R}$}},
	journal = {Indiana Univ. Math. J.},
	year = {2011},
	volume = {60},
	number = {5},
	pages = {1487--1516},
	doi = {10.57262/die/1356628828},
	zbl = {1249.35310}
}

@article{Soinger2011-1,
	author = {Sohinger, V.},
	title = {{Bounds on the growth of high {Sobolev} norms of solutions to nonlinear {Schr{\"o}dinger} equations on $S^1$}},
	journal = {Differential Integral Equations},
	year = {2011},
	volume = {24},
	number = {7--8},
	pages = {653--718},
	doi = {10.57262/die/1356628828},
	zbl = {1249.35310}
}

@article{Soinger2012,
	author = {Sohinger, V.},
	title = {{Bounds on the growth of high {Sobolev} norms of solutions to 2D {Hartree} equations}},
	journal = {Discrete Contin. Dyn. Syst.},
	year = {2012},
	volume = {32},
	number = {10},
	pages = {3733--3771},
	doi = {10.3934/dcds.2012.32.3733},
	zbl = {1247.35116}
}

@article{Staffilani1997,
	author = {Staffilani, G.},
	title = {{On the growth of high {Sobolev} norms of solutions for {KdV} and {Schr{\"o}dinger} equations}},
	journal = {Duke Math. J.},
	year = {1997},
	volume = {86},
	number = {1},
	pages = {109--142},
	doi = {10.1215/S0012-7094-97-08604-X},
	zbl = {0874.35114}
}

@article{SW20,
	author = {Staffilani, G. and Wilson, B.},
	title = {{Stability of the cubic nonlinear {Schr{\"o}dinger} equation on an irrational torus}},
	journal = {SIAM J. Math. Anal.},
	year = {2020},
	volume = {52},
	number = {2},
	pages = {1318--1342},
	doi = {10.1137/18M1179195},
	zbl = {1434.37043}
}

@article{TV12,
	author = {Tzvetkov, N. and Visciglia, N.},
	title = {{Small data scattering for the nonlinear {Schr{\"o}dinger} equation on product spaces}},
	journal = {Commun. Partial Differ. Equations},
	year = {2012},
	volume = {37},
	number = {1--3},
	pages = {125--135},
	doi = {10.1080/03605302.2011.574306},
	zbl = {1247.35004}
}

@article{TV16,
	author = {Tzvetkov, N. and Visciglia, N.},
	title = {{Well-posedness and scattering for nonlinear {Schr{\"o}dinger} equations on {$\mathbb{R}^d \times \mathbb{T}$} in the energy space}},
	journal = {Rev. Mat. Iberoam.},
	year = {2016},
	volume = {32},
	number = {4},
	pages = {1163--1188},
	doi = {10.1080/00036811.2015.1015523},
	zbl = {1334.35195}
}

@article{WY22,
	author = {Wilson, B. and Yu, X.},
	title = {{Modified scattering of cubic nonlinear {Schr{\"o}dinger} equation on rescaled waveguide manifolds}},
	journal = {preprint},
	year = {2022},
	doi = {10.48550/arXiv.2207.07248},
	eprint = {2207.07248}
}

@article{Xu17,
	author = {Xu, H.},
	title = {{Unbounded {Sobolev} trajectories and modified scattering theory for a waveguide nonlinear {Schr{\"o}dinger} equation}},
	journal = {Math. Z.},
	year = {2017},
	volume = {286},
	number = {1--2},
	pages = {443--489},
	doi = {10.1007/s00209-016-1768-9},
	zbl = {1367.35159}
}
	\bibliographystyle{alphaurl}

\end{document}